\documentclass[11pt, reqno]{article}   	

\usepackage{hyperref}
\usepackage{adjustbox}
\usepackage{amssymb, latexsym}
\usepackage{amsmath}
\usepackage{amsthm}
\usepackage{amsbsy}
\usepackage[toc,page]{appendix}
\usepackage{braket}
\usepackage{breqn}
\usepackage{authblk}
\usepackage{caption}
\usepackage{color}
\usepackage{comment}
\usepackage{dsfont}
\usepackage{esint}
\usepackage{enumerate}
\usepackage{enumitem}
\usepackage[T1]{fontenc}
\usepackage{graphicx}
\usepackage{authblk}
\usepackage{lipsum}
\usepackage{hyperref}
\usepackage{latexsym}
\usepackage{mathrsfs}
\usepackage{mathtools}
\usepackage{subcaption}
\usepackage{float}
\restylefloat{table}
\usepackage{listings}
\usepackage[dvipsnames]{xcolor} 

\usepackage{times}
\usepackage{url} 
\usepackage{tikz}
\usetikzlibrary{cd}
\usetikzlibrary{positioning}

\usepackage[left=2.5cm,top=2.5cm,right=2.5cm,bottom=2.5cm]{geometry}

\numberwithin{equation}{section}
\newtheorem{thm}{Theorem}[section]
\newtheorem{theorem}[thm]{Theorem}
\newtheorem{lemma}[thm]{Lemma}

\newtheorem{assumption}[thm]{Assumption}

\newtheorem{corollary}[thm]{Corollary}

\newtheorem{example}[thm]{Example}

\newtheorem{definition}[thm]{Definition}

\newtheorem{proposition}[thm]{Proposition}

\newtheorem{remark}[thm]{Remark}
\newtheorem{problem}[thm]{Problem}

\def\de{\delta}

\def\ep{\epsilon}

\def\Ga{\Gamma}

\def\la{\lambda}
\def\La{\Lambda}
\def\om{\omega}
\def\Om{\Omega}
\def\pa{\partial}

\def\vphi{\varphi}

\def\N{\mathbb{N}}

\def\R{\mathbb{R}}

\def\ex{\exists}
\def\fa{\forall}
\def\grad{\nabla}
\def\lang{\langle}

\def\rang{\rangle}

\def\Alg{\mathcal{A}}

\def\argmin{\text{argmin}}

\def\cof{\text{cof}}

\def\des{\text{des}}

\def\dist{\text{dist}}
\def\dive{\text{div}}

\def\liminf{\text{liminf}}
\def\limsup{\text{limsup}}

\def\loc{\text{loc}}

\def\supp{\text{supp}}

\newcommand\beq{\begin{equation}}
	\newcommand{\bburl}[1]{\textcolor{blue}{\url{#1}}}
	\newcommand\eeq{\end{equation}}
\newcommand\bea{\begin{eqnarray}}
	\newcommand\eea{\end{xq}}
\newcommand\bi{\begin{itemize}}
	\newcommand\ei{\end{itemize}}
\newcommand\ben{\begin{enumerate}}
	\newcommand\een{\end{enumerate}}

\def\Rn{\mathbb{R}^n}

\everymath{\displaystyle}

\begin{document}
\title{Localization for nonlocal gradient-based optimal control problems}

 \author{Javier Cueto\thanks{Department of Mathematics, Universidad Aut\'onoma de Madrid, Spain. \textbf{Email:} javier.cueto@uam.es}, \ Joshua M. Siktar\thanks{Department of Mathematics, Texas A\&M University, College Station, TX 77843, USA. \textbf{Email:} jmsiktar@tamu.edu}}

\date{}

	\maketitle
    
	\textbf{Abstract:}
		In this paper we consider optimal control problems in the nonlocal function space framework of \cite{bellido2023non}, where there are two different parameters: a horizon parameter $\de > 0$; and a fractional parameter $s \in (0, 1)$. The constraints are given in the form of minimizing an energy density, and we will focus on two particular cases: the well-posed case where the underlying energy density is convex and is given by the nonlocal $p$-Laplacian; and a more general poly/quasiconvex energy for which minimizers exist but may not be unique. 
        The study is concluded by analyzing the approximation to local problems in two parallel ways, either taking the fractional parameter $s$ to $1$ or the horizon parameter $\de$ to $0$.
	
	
	
	\section{Introduction}\label{Sec: intro}

    In this work, we consider families of truncated nonlocal gradient-based optimal control problems with two parameters: a modeling (horizon) parameter $\de > 0$, and a fractional parameter $s \in (0, 1)$. These optimal control problems take the form
    	\begin{equation*}\label{Eq: generic}
		\begin{cases}
			\min\{\mathcal{F}(u, g) \ | \  (u, g) \in H^{s, p, \de}_0(\Om_{-\de}; \R^n) \times Z_{\text{ad}}\} \\
			u \in \argmin_{v \in H^{s, p, \de}_0(\Om_{-\de}; \R^n)}\mathcal{W}^{s, \de}_g(v),
		\end{cases}
	\end{equation*}
    where the states $u \in H^{s,p,\de}_0$ (the energy space of our nonlocal gradients) are required to minimize a nonlocal energy density $\mathcal{W}^{s, \de}_g$. Here $g$, the control variable, belongs to the class of inputs $Z_{\text{ad}}$.
 All of these will be precisely defined in the next section. We also study the convergence of solutions to these control problems as either parameter approaches its local limit, approximating solutions to partial differential equation (PDE)-based optimal control problems.
	
More precisely, we study optimal control problems described via nonlocal gradients, motivated by the increased popularity of nonlocal models in the last decades combined with the framework provided in \cite{bellido2023non} and \cite{mengesha2023control}. Additionally, we expect to recover the classical case through two distinct ways, either by taking the fractional parameter $s$ to $1$, or the horizon $\delta$ to $0$. To our knowledge this is the first work to consider optimal control problems whose constraints involve minimizing poly or quasiconvex energies, even in the local setting.
 
The pursuit of more general models admitting functions exhibiting singularity phenomena that account for long-range interactions has placed nonlocal models in a more prominent position in modern mathematics, and attracted interest from other research fields; they ultimately replace classical derivatives with integral operators.
Notable examples include peridynamics \cite{silling2000reformulation, silling2007peridynamic}, a nonlocal description of solid mechanics proposed by Stewart Silling, where interactions among particles are considered to occur up to a finite distance $\de>0$, (called the \textbf{horizon}); and fractional calculus, where usual derivatives are generalized to an index of order $s \in (0,1)$.
Nonlocality can be showcased mathematically through a wide variety of different structures; notwithstanding, a natural way of proceeding is by introducing nonlocal gradients. This approach also fits within state-based peridynamics \cite{sarego2016linearized, silling2007peridynamic}, as these operators capture the collective information of a difference quotient around a point, which is one of the main ideas behind the deformation state concept of state-based peridynamics (instead of directly computing the energy for each pairwise interaction), where one recovers the horizon $\delta$ to determine the maximum interaction distance from each point. Their mathematical structure can be represented as the integration of a weighted version of the aforementioned quotient by a unit vector: 
	\begin{equation}\label{eq: general nonlocal gradient}
		\mathscr{G}_{\rho}u(x) \ := \ \int_{\Om}\rho(x - y)\frac{u(x) - u(y)}{|x - y|}  \frac{x - y}{|x - y|}dy.
	\end{equation}
	Here $\rho \in L^1(\Om)$ is a kernel function with a singularity at the origin, influencing the order of differentiability of the operator. The most common example of a nonlocal gradient is the \textbf{Riesz $s$-fractional gradient}, which is given by $\Om := \R^n$ and 
	\begin{equation}\label{Eq: Riesz}
		\rho(\xi) \ := \ \frac{c_{n, s}}{|\xi|^{n - 1 + s}},
	\end{equation} 
	with $c_{n, s}$ a normalizing constant. This choice leads to frameworks with a rich structure, enjoying several properties and extending those of the fractional Laplacian \cite{bellido2023non, shieh2015new, shieh2018new, vsilhavy2020fractional}. However, the use of an unbounded domain is inherently unsuitable for solid mechanics problems, so we instead use the following definition:
	\begin{equation}\label{eq: introGrad}
		D^s_{\de}u(x) \ := \ c_{n, s}\int_{B(x, \de)}\frac{u(x) - u(y)}{|x - y|}  \frac{x - y}{|x - y|}\frac{w_{\de}(x - y)}{|x - y|^{n - 1 + s}}dy,
	\end{equation}
	where 
 $w_{\de}: \Om \rightarrow [0, \infty)$ is, a priori, a cut-off function whose properties will be specified later. Nonetheless, those properties were recently generalized in a recent work by Bellido, Mora-Corral and Sch\"onberger \cite{bellido2025nonlocal}. Actually, most of the results presented in this paper would also hold for the kernels $\rho_{\de}$ introduced therein, since they also provided Poincar\'e inequalities as well as continuous and compact embeddings for kernels bounded by fractional singularities. We were initially interested in the localization when $s$ goes to $1$, and therefore considering \eqref{eq: introGrad}, we later added the localization in $\de$ thanks to the results in \cite{cueto2025gamma}. An alternative to the nonlocal gradient-defined energy is the one defined directly via double integration \cite{bellido2014existence, mengesha2012nonlocal, mengesha2014bond, schytt2024dual, scott2019fractional}.

In the analysis of optimal control problems, the controls (denoted by $g$) are [input] variables that influence the state (output) variables (denoted by $u$). Such an influence is usually given by a differential operator (PDE)-based constraint, but we instead use nonlocal operators. For instance, in \cite{mengesha2023control} the search of a desired deformation state minimizing a cost functional was made among all those verifying a linear energy equation given by a nonlocal formulation, such as linear elastic deformation energies from the bond-based description of peridynamics. The generic form of such a problem is
 \begin{equation*}
     \begin{cases}
         &\min \mathcal{F} (u,g) \\
         &\mathcal{L}_\de u \ = \ g \,\text{ in } \Om,
     \end{cases}
 \end{equation*}
 with the pair $(u,g)$ belonging to an admissible set $X_{\text{ad}} \times Z_{\text{ad}}$. In addition, $\mathcal{L}_\de u$ is a nonlocal linear operator playing the role of the Laplacian.

As a more specific example, this framework allows us to consider the following nonlocal $p$-Laplacian problem
\begin{equation} \label{eq: nonlocal p laplacian optimal control introduction}
    \begin{cases}
			\min\mathcal{F}(u, g)  \\
			\Delta^s_{\de,p} u \ := \ \dive^s_\de (|D^s_\de u|^{p-2} D^s_\de u) \ = \ g  \,\text{ in } \Om_{-\de}
		\end{cases}
\end{equation}
with the pair $(u,g)$ belonging to an admissible set $ H^{s, p, \de}_0(\Om_{-\de}; \R^n) \times Z_{\text{ad}}$, $p\in (1,\infty)$, $\dive^s_\de$ being a nonlocal divergence operator, and $H^{s, p, \de}_0(\Om_{-\de}; \R^n)$ are nonlocal Sobolev spaces linked to the nonlocal gradients $D^s_\de$, whose values in the boundary layer are fixed to be $0$. The latter is usually referred to as a \textbf{nonlocal Dirichlet boundary condition}. Thanks to \cite{kreisbeck2024non}, alternative boundary conditions may also be considered, but exploring the range of possibilities here is not the focus of this work. Here we make a mention of papers that consider optimal control problems in the peridynamics setting: \cite{han2024compactness, mengesha2023control, munoz2022local}. We note that unlike the current work, \cite{han2024compactness} uses an optimal control framework possessing kernels with an infinite horizon, for instance with tempered fractional kernels.

In any case, we can identify the state constrains as the Euler-Lagrange equations of a convex energy functional (see \cite{bellido2023non}),
\begin{equation*}
    \begin{cases}
			\min_{(u,g)\in  H^{s, p, \de}_0(\Om_{-\de}; \R^n) \times Z_{\text{ad}}}\mathcal{F}(u, g)
            \\
			\dive^s_\de (D_z W_g(x,u,D^s_\de u))=D_y W_g(x,u,D^s_\de u)  \,\text{ in } \Om_{-\de},
		\end{cases}
\end{equation*}
where $W_g$ is an energy density depending on the choice of control $g$, and $D_y$, $D_z$ stand for the derivatives with respect to the second and third components, respectively. If $W_g(x,u,D^s_\de u)=\frac{1}{p}|D^s_\de u|^p -g$, we  recover \eqref{eq: nonlocal p laplacian optimal control introduction}. This Euler-Lagrange condition can be relaxed by merely requiring that the deformation states belong to the sets of minimizers of the energy functional,
	\begin{equation*}\label{Eq: generic2}
		\begin{cases}
			\min_{(u,g)\in  H^{s, p, \de}_0(\Om_{-\de}; \R^n) \times Z_{\text{ad}}}\mathcal{F}(u, g) \\ 
			u \in \argmin_{v \in H^{s, p, \de}_0(\Om_{-\de}; \R^n)}\mathcal{W}^{s, \de}_g(v),
		\end{cases}
	\end{equation*}
	where $\mathcal{W}^{s, \de}_g(\cdot)$ is a nonlocal energy depending on the parameters $s$ and $\de$, and a choice of control $g$. The advantage of this approach is that the Fréchet differentiability assumption on the energies becomes unnecessary. In addition, we could also make $\mathcal{F}(\cdot, \cdot)$ explicitly depend on the nonlocal gradient to make fuller use of the characterizations of \cite{cueto2023variational}, but that would detract from the main ideas of this paper, so we stick to these assumptions.
    
    We distinguish two classes of nonlocal energies to be considered: the first is a weighted nonlocal $p$-Laplacian energy, which is strictly convex; the second is a general nonlocal quasiconvex energy satisfying typical $p$-growth bounds. It is well-known that the standard notion of convexity is not amenable to finding minimizers of functionals that take vector-valued arguments. Through the 20th century, mathematicians developed alternative notions of convexity that, while weaker than standard convexity, can still be used to show existence of minimizers for functionals. One of these classical notions is known as polyconvexity which, in the three-dimensional case, assumes the cost functional is jointly convex in the gradient matrix of the argument function, its co-factor matrix, and its determinant. The classical Piola identity is utilized to show the weak lower semi-continuity of such functionals, and in turn existence of minimizers can be proven. Weaker still is the notion of quasiconvexity, and this mathematically encapsulates materials with the property that an affine deformation has a smaller energy than any internally distorted deformation. In this case the classical Morrey's Theorem assures existence of minimizers \cite{acerbi1984semicontinuity, marcellini1985approximation, meyers1965quasi, morrey1952quasi}. See also \cite{ball1976convexity} for motivation of the notions of quasi and polyconvexity in the context of nonlinear elasticity.
	Nonlocal analogues of these results, where we replace the classical gradient with different notions of a nonlocal gradient, were proven in \cite{bellido2021gamma, bellido2023minimizers, cueto2023variational}. Particularly those in \cite{bellido2023minimizers, cueto2023variational} will be recalled later for direct use in the current paper. These include nonlocal Poincar\'e inequalities and embedding results.



	As we will see, when the energy being minimized is non-convex (either quasiconvex or polyconvex), we may obtain existence of solutions to the nonlocal optimal control problem. However, the admissible set of controls and states will not, in general, be convex, so we cannot guarantee uniqueness of minimizers. The lack of uniqueness of minimizers for the energy will also prevent us from proving convergence of solutions for the family of control problems as $s \rightarrow 1^-$ or $\de \rightarrow 0^+$.
	
    To treat the analysis as $s \rightarrow 1^-$, we rely on \cite{bellido2023non, cueto2023variational}; meanwhile, for $\de \rightarrow 0^+$, we rely on results in \cite{arroyo2025functional, cueto2025gamma}. These papers establish the convergence of minimizers for our family of energy densities.\\

	 The paper's remaining sections proceed as follows: Section \ref{Sec: prelim} provides notation and functional analytic preliminaries. Then, Section \ref{Sec: Optimal Control} gives the existence and uniqueness results when $\de$ and $s$ are fixed. After that Section \ref{Sec: asym} separately discusses convergence of minimizers and energies as $\de \rightarrow 0^+$ or as $s \rightarrow 1^-$; namely Subsection \ref{subsec: s->1pLap} 
     addresses the $s \to 1^-$ convergence while Subsection \ref{subsec: de->0pLap} 
    addresses the $\de \to 0^+$ convergence.  Finally, Section \ref{sec: future} details directions for future projects. 
     
	
	
	\section{Preliminaries and notation}\label{Sec: prelim}
	
	\subsection{Basic definitions}\label{Subsec: Notation}
	
	Throughout this paper $\Om \subset \R^n$ refers to an open and bounded domain, and $B(x, r)$ denotes a ball in $\R^n$ of radius $r$ centered at $x \in \R^n$. Notice that in order to keep the integral defining the gradient over a symmetric domain, $B(x,\de)$, we need to consider functions defined on a slightly bigger domain, $\Om_{\de}:=\Om+B(0,\de)$, with $\Om $ playing the role of the nonlocal interior domain, and where the sum designates the Minkowski sum of two sets. In addition, the nonlocal Laplacian we will be working with, as a composition of two nonlocal derivatives, will be defined on a slightly smaller domain $\Om_{-\de} := \{x \in \Om: \dist(x, \pa \Om) > \de\}$, meaning we would be dealing with a nonlocal boundary composed of two collars, $\Om_{\de}\backslash \Om$ and $\Om\backslash \Om_{-\de}$, an exterior and interior one, respectively. We also use $\argmin$ to denote a solution to a prescribed minimization problem, and $\mathcal{L}^n$ denotes the Lebesgue measure on $\R^n$, while $\mathcal{B}^n$ denotes the Borel measure on $\R^n$. Finally, we assume that $\mathcal{L}^n(\pa \Om_{-\de}) = 0$.
	
	Here $C^{\infty}_c(\R^n; \R^n)$ denotes, as per usual, the smooth, compactly supported vector fields taking values in $\R^n$. For $p \in (1, \infty)$, we denote by $p'$ the conjugate exponent of $p$, i.e. the solution to the equation $\frac{1}{p'} + \frac{1}{p} = 1$. Naturally, then, we denote the duality pairing between $L^{p'}(\Om; \R^n)$ and $L^p(\Om; \R^n)$ as $\lang \cdot, \cdot\rang_{L^{p'}, L^p}$. We will use the notation $A \preceq B$ to denote that there is a nonessential constant $c>0$, such that $A\leq cB$. In addition, $\cdot: \R^n \times \R^n \rightarrow \R$ will denote the Euclidean dot product; $\otimes: \R^n \times \R^n \rightarrow \R^{n\times n}$ will denote the outer tensor product; and $:$ will denote the Fröbenius inner product on matrices in $\R^{n\times n}$.
	In this framework we will introduce the operators and functional spaces we are going to work with, as well as gather a few relevant results from previous works that we will use throughout the article. Here we consider a one-point nonlocal gradient whose kernel is given by a smoothly truncated version of the Riesz potential \eqref{Eq: Riesz}. These gradients are indeed a particular case of \eqref{eq: general nonlocal gradient} where
	\begin{equation}\label{eq: TruncatedRieszKernel}
		\rho^s_{\de}(x) \ := \ \frac{n-1+s}{\gamma(1-s) |x|^{n-1+s}}w_\de(x) \quad \text{ with } \quad \gamma(s):=\frac{\pi^{\frac{n}{s}}2^s\Gamma\left(\frac{s}{2}\right)}{\Gamma\left( \frac{n-s}{2}\right)}.
	\end{equation}
	Here $\Ga$ is the Euler's gamma function and $\de > 0$ is the horizon of interaction. Sometimes we may write instead 
 \begin{equation*}
		\rho^s_{\de}(x) \ = \ \frac{c_{n,s}}{ |x|^{n-1+s}}w_\de(x) \quad \text{ with } \quad c_{n,s} \:= \ \frac{n+s-1}{\gamma(1-s)}.
	\end{equation*}
 Now we state the precise assumptions our kernels satisfy.

	
\begin{assumption}[Kernel assumptions]\label{Assump: Kernels}
		The family of truncated kernels $\{w_{\de}\}_{\de > 0}$ satisfies the following assumptions:
		\begin{enumerate}
			\item $w_\delta:\R^n\to[0,\infty)$ is radial; i.e. $w_\de(x)=\bar{w}_\de(|x|)$ for some non-negative $\bar{w}_\de\in C_c^\infty([0,\infty))$, with $\supp(\bar{w}_\de)\subset [0,\delta)$.
			\item There is a constant $0<b_0<1$ such that $\bar{w}|_{[0,b_0\de]}=a_0$, where $a_0 := \max_{r\ge 0}\bar{w}_\de(r)$.
			\item
			$\bar{w}_\de (r_1) \geq \bar{w}_\de (r_2)$ whenever $r_1 \leq r_2$.	
   \end{enumerate}
	\end{assumption}
   Note that under these assumptions the kernel $\rho^s_\de$ is integrable, i.e., $\rho^s_\de \in L^1 (\R^n)$. Now we precisely define our notions of nonlocal gradient and divergence (see \cite[Definition 3.1]{bellido2023non}).

	\begin{definition}[Nonlocal gradient and divergence]\label{Def: NLGradAndDiv}
		Let $0\leq s < 1$, $\de>0$ be fixed.
		\begin{enumerate}
			\item For $u\in C^\infty_c(\R^n; \R^n)$ the \textbf{nonlocal gradient} $D^s_\de u$ is defined as
			\begin{equation*}
				D^s_\de u(x) \ := \ \int_{B(x,\de)} \frac{u(x)-u(y)}{|x-y|} \otimes \frac{x-y}{|x-y|}\rho^s_{\de}(x - y) \, dy.
			\end{equation*}
			\item For $\phi\in C^\infty_c(\Rn,\Rn)$ the \textbf{nonlocal divergence} $\dive^s_\de \phi$ is defined as
			\begin{equation}\label{Eq: divPV}
				\dive^s_\de \phi(x) \ := -\ \text{pv}_x\int_{B(x,\de)} \frac{\phi(x)+\phi(y)}{|x-y|}\frac{x-y}{|x-y|}\rho^s_{\de}(x - y)dy.
			\end{equation}
Here the notation $\text{pv}$ refers to an integral calculated in the principal value sense.
		\end{enumerate}
	\end{definition}
 \begin{remark}
     We could have alternatively defined the nonlocal divergence in the following way, as was done in \cite{bellido2023non, cueto2025gamma}:
     \[
		c_{n,s} \ \text{pv}_x\int_{B(x,\de)} \frac{\phi(x)-\phi(y)}{|x-y|}\frac{x-y}{|x-y|}\frac{w_\de(x-y)}{|x-y|^{n+s-1}} dy.
		\]
  Both options are equivalent given the radial symmetry of the kernel. We chose the one in \eqref{Eq: divPV} as it is in accordance with the structure provided by Mengesha-Spector \cite{mengesha2015localization}; in particular, the nonlocal integration by parts holds even when the kernels are not necessarily symmetric.
 \end{remark}
	Similarly to the classical case, we can define our nonlocal spaces as the closure of smooth functions under the proper norm, where the gradient of a function in these spaces is defined by density/limit of a sequence (see \cite{bellido2023non}). 
	
	\begin{definition}\label{Def: HspdeNorm}
		Let $1\leq p <\infty$. Then $H^{s,p,\de}(\Om; \R^n)$ is defined as the closure of $C^{\infty}_c(\R^n; \R^n)$ under the norm
		\begin{equation*}
			\|u\|_{H^{s,p,\de}(\Om; \R^n)}\ := \ \|u\|_{L^p(\Om_{\de}; \R^n)} + \|D^s_\de u\|_{L^p(\Om; \R^{n\times n})}.
		\end{equation*}
		We may also write the semi-norm
		\begin{equation*}
			[u]_{H^{s, p, \de}(\Om; \R^n)} \ := \ \|D^s_\de u\|_{L^p(\Om; \R^{n\times n})}.
		\end{equation*}
		In addition, we denote the set of functions in $H^{s,p,\de}(\Om; \R^n)$ that vanishes outside of  $\Om_{-\de}$ as $H^{s, p, \de}_0(\Om_{-\de}; \R^n)$. We also denote, for any $g \in H^{s, p, \de}(\Om; \R^n)$, the affine space $H^{s, p, \de}_g(\Om_{-\de}; \R^n) \ := \ g + H^{s, p, \de}_0(\Om_{-\de}; \R^n)$.
	\end{definition}
	
	The nonlocal Sobolev space is defined for $s \in [0, 1)$, $p \in [1, \infty)$, with $\Om \subset \R^n$ being open. We will focus on when $\Om$ is bounded. When $\Om=\Rn$ it holds that $H^{s,p,\de}(\Rn; \R^n)$ coincides with the Bessel fractional spaces $H^{s,p}(\Rn; \R^n)$ while $H^{s,p}_0(\Om; \R^n)$ stands for the functions in $H^{s,p}(\Rn; \R^n)$ that vanish in $\Om^c$ (see \cite{cueto2023variational}). The case $p=\infty$ can also be defined in the distributional sense as in \cite{cueto2023variational}.

	\subsection{Functional analytic framework}\label{subsec: framework}
	
	In this Subsection we gather some known results pertaining to the nonlocal Sobolev spaces that will be used to study the optimal control problems. First, here are a few basic observations from the definitions of our function spaces (see \cite[Propositions 2.5-2.6]{bellido2023minimizers} and \cite[Corollary 1]{cueto2023variational}). In some cases we require zero Dirichlet conditions as it is in those spaces where the properties have been ascertained.
	\begin{itemize}
		\item For $p \in [1, \infty)$, $(H^{s, p, \de}(\Om; \R^n), \|\cdot\|_{H^{s, p, \de}(\Om; \R^n)})$ is a separable Banach space, and is Hilbert when $p = 2$.
		\item If $p \in (1, \infty)$, then $(H^{s, p, \de}(\Om; \R^n), \|\cdot\|_{H^{s, p, \de}(\Om; \R^n)})$ is also reflexive. We will focus on this case for the remainder of the paper.
		\item Whenever $1 \leq q \leq p < \infty$, we have the continuous embedding $H^{s, p, \de}(\Om; \R^n) \subset H^{s, q, \de}(\Om; \R^n)$.
		\item If $0 \leq s_1 \leq s_2 \leq 1$ and $p \in (1, \infty)$, then $H^{s_2, p, \de}_0(\Om; \R^n) \subset H^{s_1, p, \de}_0(\Om; \R^n)$ is a continuous embedding with an embedding constant depending only on $n$, $\de$, and $p$.
		 \item Let $p\in (1,\infty)$. For every $\de>0$, $H^{s,p,\de}_0(\Om; \R^n)=H^{s,p}_0(\Om; \R^n)$. Recall that we are not assuming here that $s$ and $\de$ are linked. 
	\end{itemize}
	
	The following nonlocal Poincaré inequality will be essential for our analysis in multiple instances. Initially obtained in  \cite[Theorem 4]{bellido2023non}, by adding an additional hypothesis as in \cite{cueto2023variational} or \cite{cueto2025gamma}, we can choose the constant to be independent of $s$ or $\delta$, respectively, so as to follow the localization process.
	\begin{proposition}[Nonlocal Poincaré inequality]\label{Prop: NLPoincare}
		Let $s \in [0, 1)$,  $\delta > 0$, $p \in (1, \infty)$, $\Om \subset \R^n$ be open and bounded. Then there exists a constant $C > 0$ such that for all $u \in H^{s, p, \de}_0(\Om_{-\de}; \R^n)$, we have
		\begin{equation*}\label{Eq: NLPoincare}
			\|u\|_{L^p(\Om; \R^n)} \ \leq \ C\|D^s_{\de}u\|_{L^p(\Om; \R^{n\times n})}.
		\end{equation*}
        In addition, either of these may alternatively hold:
        \begin{itemize}
            \item[a)] if $a_0:=w_\de(0)=1$, then the constant is independent of $s$,
        \item[b)] if we have a sequence of kernels constructed by re-escalation, i.e. $\rho_\de^s(x)=\de^{-n}\rho \left(\frac{x}{\de}\right)$, with $\rho(x)=\rho^s_1(x)$ given by \eqref{eq: TruncatedRieszKernel} and $\de=1$ in Assumption \ref{Assump: Kernels}, then the constant can be chosen independently of $\de$.
        \end{itemize}
	\end{proposition}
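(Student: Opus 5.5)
The natural route is the one used in \cite{bellido2023non}: transfer the problem to the Riesz fractional gradient on $\R^n$, where a Poincaré inequality is classical. The key structural fact I will invoke is the nonlocal fundamental theorem of calculus for $D^s_\de$. There is a kernel $Q^s_\de$, radial and of class $C^\infty$ away from the origin, with $|Q^s_\de(x)|\preceq |x|^{-(n-1+s)}$ near $0$ and rapid decay at infinity, such that
\[
u \ = \ \nabla Q^s_\de * D^s_\de u \quad\text{for all } u\in C^\infty_c(\R^n;\R^n),
\]
with the convolution taken componentwise.

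Equivalently, $D^s_\de u = D^s (Q^s_\de * u)$ relates the truncated gradient to the Riesz gradient, and this extends by density to $H^{s,p,\de}_0(\Om_{-\de};\R^n)$. If I cannot cite this from \cite{bellido2023non}, I would establish it by Fourier analysis. Since $D^s_\de u = \nabla(\tilde Q * u)$ for a radial kernel $\tilde Q$, one needs $\widehat{\tilde Q}$ to be positive and to behave like $|\xi|^{s-1}$ at infinity. The monotonicity and flatness in Assumption~\ref{Assump: Kernels} are what make positivity hold.

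Given the representation, I would argue as follows. Let $u\in H^{s,p,\de}_0(\Om_{-\de};\R^n)$, so $u=0$ outside $\Om_{-\de}$. For $x\in\Om$ write
\[
u(x) \ = \ \int_{\R^n}\nabla Q^s_\de(x-y)\,D^s_\de u(y)\,dy.
\]
Then split the region of integration into $y\in\Om$ and $y\notin\Om$.

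\textbf{Interior part.} On $\Om$, the kernel $|\nabla Q^s_\de(z)|\preceq |z|^{-(n-1+s)-1}$ is too singular to be integrable. I therefore cannot use Young's inequality directly, and would instead use the integral form $u=Q^s_\de * \dive$-type expression with the integrable kernel. Concretely, I would take the alternative representation $u = V^s_\de * D^s_\de u$ with $|V^s_\de(z)|\preceq |z|^{-(n-1+s)}$ locally integrable. This is obtained by moving one derivative onto $D^s_\de u$ through the identity $D^s_\de u=\nabla(\tilde Q*u)$ and inverting $\tilde Q$ on the Fourier side.

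Young's inequality on the bounded set $\Om$ then gives
\[
\|u\|_{L^p(\Om)} \ \le\ \|V^s_\de\|_{L^1(B(0,\mathrm{diam}\,\Om_\de))}\,\|D^s_\de u\|_{L^p} + \text{tail}.
\]

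\textbf{Exterior part.} For $y\notin\Om$ we have $D^s_\de u(y)=0$ unless $y\in\Om_\de$, because $u$ vanishes outside $\Om_{-\de}$ and the gradient has horizon $\de$. Hence $D^s_\de u$ is supported in $\Om$ itself whenever $\supp u\subset\Om_{-\de}$, and the tail vanishes. This is exactly why the inequality is stated for $\Om_{-\de}$ with the $\Om$-norm of the gradient on the right.

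\textbf{Main obstacle.} The hard step is the representation formula with an integrable kernel, that is, the positivity of $\widehat{\tilde Q}$ together with the correct asymptotics at both $0$ and $\infty$ of the Fourier multiplier. This requires the kernel assumptions and a careful stationary-phase or Bessel function computation. I would first try to reduce it to a one-dimensional radial integral against the Bessel function $J_{n/2}$ and exploit the monotonicity of $\bar w_\de$.

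\textbf{Uniformity.} For a), the constant $\|V^s_\de\|_{L^1}$ must be tracked as $s\to1$, and the normalization $c_{n,s}$ with $a_0=1$ keeps it bounded. For b), I would rescale: setting $u_\de(x)=u(\de x)$ gives $D^s_\de u(\de x)=\de^{-1}D^s_1u_\de(x)$. This reduces the claim to a uniform Poincaré constant for domains $\Om/\de$ near the nonlocal boundary, which I expect to be the second delicate point. I would try a covering argument across the collar of width $\de$.
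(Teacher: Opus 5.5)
The paper does not prove this proposition. It imports the inequality from \cite[Theorem 4]{bellido2023non} and the uniform versions a), b) from \cite{cueto2023variational} and \cite{cueto2025gamma}. Your core mechanism is the right one for the basic inequality: a representation formula $u=V^s_\de*D^s_\de u$, the vanishing of $D^s_\de u$ off $\Om$, and Young's inequality on a bounded ball. However, the kernel facts you state are wrong. The identity $u=\nabla Q^s_\de*D^s_\de u$, with $Q^s_\de\sim|x|^{-(n-1+s)}$ and rapidly decaying, is false. The compactly supported radial kernel with that singularity is your $\tilde Q$, for which $D^s_\de u=\nabla(\tilde Q*u)$. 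The representation kernel is its inverse, $\widehat{V^s_\de}(\xi)=-i\xi/(2\pi|\xi|^2\widehat{\tilde Q}(\xi))$. Since $\widehat{\tilde Q}(\xi)\approx a_0(2\pi|\xi|)^{s-1}$ as $|\xi|\to\infty$ and $\widehat{\tilde Q}(0)=\int\tilde Q=\frac1n\int\rho^s_\de>0$, you get $|V^s_\de(x)|\approx|x|^{-(n-s)}$ near $0$, not $|x|^{-(n-1+s)}$; you are inverting an operator of order $s$. At infinity, $|V^s_\de(x)|\preceq|x|^{1-n}$. For $s\in(0,1)$ this kernel is still locally integrable, so your Young estimate survives the correction. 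At $s=0$, which the statement includes, $V^0_\de$ is a Calder\'on--Zygmund kernel, and you would need $L^p$-boundedness of singular integrals instead. Also, $D^s_\de u(y)=0$ for $y\notin\Om$ because the whole ball $B(y,\de)$ misses $\Om_{-\de}$, not merely because of the horizon.

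The genuine gaps are in a) and b). In a), ``the normalization keeps it bounded'' is an assertion, not an argument. The hypothesis $a_0=1$ makes $\tilde Q-I_{1-s}$ constant near the origin, so that $\tilde Q\to\delta_0$ and $D^s_\de\to\nabla$ as $s\to1^-$. You still have to prove an $s$-uniform bound on $\|V^s_\de\|_{L^1(B(0,R))}$, which requires lower bounds on $\widehat{\tilde Q}$ that are uniform in $s$. This norm blows up like $1/s$ as $s\to0$, since $V^s_\de$ tends to the non-integrable kernel $V^0_\de$. So your route can only give uniformity on $[s_0,1)$, which is what the limit $s\to1^-$ uses.

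In b), your rescaling $D^s_\de u(\de x)=\de^{-1}D^s_1u_\de(x)$ is right, but it does not reduce the claim to a uniform Poincar\'e constant on $\Om/\de$. That claim is false, as $v(X)=\phi(\de X)$ with $\phi\in C^\infty_c(\Om)$ fixed shows. It reduces instead to a constant growing at most like $1/\de$. The missing ingredient is the far-field decay of the representation kernel. The rescaled kernels satisfy $V^s_\de(x)=\de^{1-n}V^s_1(x/\de)$. Moreover $|V^s_1(x)|\preceq|x|^{1-n}$ for $|x|\geq1$, which comes from $\int\tilde Q=\frac1n\int\rho^s_1>0$ at horizon one. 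Hence $\|V^s_\de\|_{L^1(B(0,R))}=\de\|V^s_1\|_{L^1(B(0,R/\de))}\preceq\de+R$, and the same Young estimate is uniform in $\de\in(0,1]$. No covering argument across the collar is needed: the difficulty is the large-scale behaviour of the kernel, not the boundary layer.
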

	Another important tool we will need is a compact embedding result, taken from \cite[Theorem 7.3]{bellido2023non} (see also \cite[Theorem 2.9]{bellido2023minimizers}). We only mention the $p \in (1, \infty)$ case here.
	
	\begin{proposition}[Compact embedding]\label{Prop: CompactEmbedding}
		Let $s \in (0, 1)$, $p \in (1, \infty)$, $\de > 0$, and $g \in H^{s, p, \de}(\Om; \R^n)$. Suppose $\{u_j\}^{\infty}_{j = 1} \subset H^{s, p, \de}_g(\Om_{-\de}; \R^n)$ is a sequence such that $u_j \rightharpoonup u$ weakly in $H^{s, p, \de}(\Om; \R^n)$. Then $u_j \rightarrow u$ strongly in $L^q(\Om; \R^n)$, where $q$ satisfies the following:
		\begin{itemize}
			\item $q \in [1, p^*_s)$ if $sp < n$;
			\item $q \in [1, \infty)$ if $sp = n$;
			\item $q \in [1, \infty]$ if $sp > n$.
		\end{itemize}
		Here $p^*_s := \frac{np}{n - sp}$; in addition, we have that $u \in H^{s, p, \de}_g(\Om_{-\de}; \R^n)$.
	\end{proposition}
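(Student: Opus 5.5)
The plan is to reduce the statement to a standard compactness argument for a bounded sequence. The two inputs are the fractional Rellich-type compactness available for the Bessel spaces $H^{s,p}$, and the identification $H^{s,p,\de}_0 = H^{s,p}_0$ recalled in Subsection \ref{subsec: framework}.

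\textbf{Step 1: reduce to zero boundary data.} Set $v_j := u_j - g \in H^{s,p,\de}_0(\Om_{-\de};\R^n)$. Since $u_j \rightharpoonup u$ weakly in $H^{s,p,\de}(\Om;\R^n)$, we get $v_j \rightharpoonup u - g$. Weakly convergent sequences are bounded, so $\{v_j\}$ is bounded in $H^{s,p,\de}$.

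\textbf{Step 2: limit stays in the affine space.} The set $H^{s,p,\de}_0(\Om_{-\de};\R^n)$ is a linear subspace. It is closed, because vanishing outside $\Om_{-\de}$ is preserved under $L^p(\Om_\de)$ limits, and the norm controls $\|\cdot\|_{L^p(\Om_\de)}$. A closed convex set is weakly closed, so $u-g$ lies in $H^{s,p,\de}_0$, i.e. $u\in H^{s,p,\de}_g(\Om_{-\de};\R^n)$.

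\textbf{Step 3: bound in a fixed space.} Each $v_j$ is extended by zero to $\R^n$. Using $H^{s,p,\de}_0=H^{s,p}_0$ with equivalence of norms, $\{v_j\}$ is bounded in $H^{s,p}(\R^n;\R^n)$ and supported in the fixed compact set $\overline{\Om_{-\de}}$. The norm equivalence is the content of \cite{bellido2023non}: the nonlocal gradient differs from the Riesz fractional gradient by a convolution with a smooth, integrable remainder kernel. The Poincaré inequality of Proposition \ref{Prop: NLPoincare} controls the $L^p$ part.

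\textbf{Step 4: compact embedding.} The Bessel potential space $H^{s,p}(\R^n)$ restricted to functions supported in a fixed bounded set embeds compactly into $L^q$ for $q<p^*_s$. This follows from the Sobolev embedding $H^{s,p}\subset L^{p^*_s}$ (or $L^r$ for all $r<\infty$ if $sp=n$, $L^\infty$ if $sp>n$). For the $L^p$ compactness I would use Fréchet–Kolmogorov, via the translation estimate $\|\tau_h v - v\|_{L^p}\preceq |h|^{s}\|v\|_{H^{s,p}}$, which holds since $H^{s,p}\subset B^{s}_{p,\infty}$. Interpolating between strong $L^1$ convergence and the uniform $L^{p^*_s}$ bound gives strong convergence in every $L^q$ with $q<p^*_s$.

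When $sp>n$, the case $q=\infty$ instead uses the Hölder embedding $H^{s,p}\subset C^{0,s-n/p}$ together with Arzelà–Ascoli on the compact support.

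Finally, any subsequence has a further subsequence converging strongly, and the limit is identified as $u-g$ by the weak convergence. Hence the whole sequence converges, and $u_j = g + v_j \to u$ in $L^q(\Om;\R^n)$.

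The main obstacle is Step 3, namely justifying uniform control of the $H^{s,p}(\R^n)$ norm from the truncated-gradient norm. I would take this directly from the identification results of \cite{bellido2023non, cueto2023variational} rather than reprove it.
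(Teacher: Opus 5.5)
Your argument is correct, but it takes a different route from the one behind the statement. The paper gives no proof of its own; it imports the result from \cite[Theorem 7.3]{bellido2023non}. There the compact embedding, along with the Poincar\'e, Morrey and Trudinger-type inequalities, is obtained intrinsically from the nonlocal fundamental theorem of calculus. That is the representation $v = V^s_\de * D^s_\de v$ for $v = u_j - g$, whose nonlocal gradient vanishes outside $\Om$, with a kernel $V^s_\de$ of size $|x|^{s-n}$ near the origin. Compactness then reduces to that of a Riesz-type potential operator on a bounded set.

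You instead move everything to the Bessel scale through the identification $H^{s,p,\de}_0 = H^{s,p}_0$. After that you use only classical tools: Fr\'echet--Kolmogorov via $H^{s,p}\subset B^s_{p,\infty}$, the Sobolev and H\"older embeddings, and interpolation. Your route gains brevity and a uniform treatment of the three regimes, and it gives the critical bound in $L^{p^*_s}$ as a by-product. The cost is that all of the genuinely nonlocal work sits in Step 3, which you outsource. Note also that the paper lists this identification among facts taken from \cite{bellido2023minimizers, cueto2023variational}, not from \cite{bellido2023non}.

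Since you only need the one-sided inclusion, you could make Step 3 self-contained for an arbitrary open $\Om_{-\de}$:
\begin{itemize}
\item Because $\supp \bar w_\de$ is a compact subset of $[0,\de)$, the distributional nonlocal gradient on $\R^n$ of the zero extension $\tilde v$ equals $\chi_\Om D^s_\de v$.
\item Your decomposition then finishes the job. $D^s_\de \tilde v - a_0 D^s\tilde v$ is, up to sign, the convolution of $\tilde v$ with the smooth, odd, integrable kernel $c_{n,s}(w_\de(z)-a_0)|z|^{-n-s}z/|z|$.
\item Hence $D^s\tilde v \in L^p(\R^n)$ with the required bound, so $\tilde v \in H^{s,p}(\R^n)$.
\end{itemize}
Finally, invoking the Poincar\'e inequality in Step 3 is unnecessary. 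The quantity $\|v_j\|_{L^p(\Om_\de)}$ is already part of the $H^{s,p,\de}$ norm, which is bounded.
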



\subsection{Convexity notions}\label{subsec: convexity}
 
	Finally, for the sake of the reader, we recall concepts weaker than convexity that also provide the weak lower semi-continuity of energy functionals. This was proven to hold in the nonlocal case in \cite{bellido2023minimizers, cueto2023variational}. For the situations where we consider a general energy density (i.e. not specifically the nonlocal $p$-Laplacian case), we will denote it by $W$, and it will sometimes refer to a polyconvex functional, and sometimes to merely a quasiconvex one. We make these definitions precise now.

	\begin{definition}[Polyconvex energy]\label{Def: pConvexEnergy}
		Let $F\in \R^{n\times n}$ and $\phi(F)$ denotes the collection of all its minors. Then, a function $W: \Om \times \R^n \times \R^{n \times n} \rightarrow \R$ is said to be \textbf{polyconvex} if there exists a convex function $\bar{W}$ such that
		\begin{equation*}\label{eq: pConvexEnergy}
			W(x, u, A) \ := \ \bar{W}(x,u,\phi(F)).
		\end{equation*}
	\end{definition}
	In the case of $\R^3$ it reduces to \begin{equation*}
		W(x, u, A) \ := \ \bar{W}(x, u, A, \cof(A), \det(A)),
	\end{equation*}
	where $\bar{W}$ is jointly convex in $A$, $\cof(A)$, and $\det(A)$. A typical example in this case is the following one, discussed in \cite{bellido2020bond, ciarlet1989introduction, davet1986justification, Rin}.
	\begin{example}[Mooney-Rivlin materials]\label{Ex: MooneYRivlin}
		One of the most common examples of a polyconvex function is the family of \textbf{Compressible Mooney-Rivlin materials}, which are functions of the form
		\begin{equation*}\label{Eq: MooneyRivlin}
			f(A) \ := \ a|A|^2 + b|\cof(A)|^2 + T(\det(A))
		\end{equation*}
		where $a, b > 0$ and $T: \R \rightarrow \R \cup \{+\infty\}$ is convex. 
	\end{example}

	
	Now we define quasiconvexity: we use the equivalent notion obtained by replacing the integral over $(0, 1)^n$ with a mean value integral over $\Om$ \cite[Lemma 5.2]{Rin}.  
	
	\begin{definition}[Quasiconvex energy]\label{Def: qConvexEnergy}
		An energy $W: \Om \times \R^n \times \R^{n \times n} \rightarrow \R$ is said to be \textbf{quasiconvex} if for a.e. $x \in \Om$, all $u \in \R^n$, and $A \in \R^{n \times n}$ the following inequality holds for all $\vphi \in W^{1, \infty}_0(\Om)$:
		\begin{equation*}\label{eq: qConvexEnergy}
			W(x, u, A) \ \leq \ \frac{1}{\left|{\Om}\right|}\int_{{\Om}}W(x, u, A + \grad \vphi(y))dy.
		\end{equation*}
	\end{definition}
	It is well-known that a convex energy is automatically polyconvex, and a polyconvex energy in turn is automatically quasiconvex; in addition, these notions all coincide if $n = 1$ \cite{dacorogna2007direct, Rin}. 


	\subsection{Energies}\label{subsec: minEnergy}
   Given that our state constrains are expressed as minimization of energy functionals, 
%
   %
   we present the corresponding notation and existence of minimizer results. 
   For a fixed $\de > 0$, $s\in (0,1)$, and $g \in L^{p'}(\Om; \R^n)$ we define the nonlocal energy $\mathcal{W}^{s, \de}_g: L^p(\Om; \R^n) \rightarrow \R$ via the formula
	\begin{equation*}\label{Eq: NLEnerg}
		\mathcal{W}^{\de, s}_g(u) \ := \ \begin{cases}
			\int_{\Om}W(x, u(x), D^s_{\de}u(x))dx - \lang g, u\rang, \ u \in H^{s, p, \de}(\Om; \R^n) \\
			+\infty \ \text{otherwise},
		\end{cases}
	\end{equation*}
 where we will assume  $W : \Om \times \R^n \times \R^{n \times n} \to \R \cup \{ \infty \}$ satisfies the following:
 \begin{itemize}
     \item $W$ is $\mathcal{L}^n \times \mathcal{B} \times \mathcal{B}^n$-measurable.

\item $W (x, \cdot, \cdot)$ is lower semi-continuous for a.e.\ $x \in \Om$.
 \end{itemize}
 	The key existence of minimizers result needed for our energies has already been proven in the literature, so we adapt it to the energies considered here. 
 Namely, in \cite[Theorem 8.1]{bellido2023non} the existence of minimizers of convex energy functionals was proven under the following lower bound condition on the energy density $W$:
 \begin{equation} \label{eq: convex lower bound}
     W(x,u,A) \ \geq \ a(x) +c|A|^p
 \end{equation}
 for $c>0$, $p\in (1,\infty)$, a.e. $x \in \Om$ and some $a \in L^1(\Om)$. It can easily be seen in that setting that minimizers are unique if $W$ is strictly convex in the third argument. We note that the nonlocal $p$-Laplacian energy studied in Subsection \ref{Sec: pLapEner}
    also falls under the guise of this result. The next proposition verifies that such a result is still true when $gu$ is subtracted from the energy density.

  \begin{proposition}[Minimization of nonlocal energy]\label{minimizationOfNLEnergy}
  Let $W$ be a convex energy density that satisfies the lower bound condition \eqref{eq: convex lower bound}. Then, for any fixed $g \in L^{p'}(\Om; \R^n)$, there exists a minimizer $u \in H^{s, p, \de}_0(\Om_{-\de}; \R^n)$ of $\mathcal{W}^{\de, s}_{g}(\cdot)$. In addition, if $W$ is strictly convex, such a minimizer is unique. 
	\end{proposition}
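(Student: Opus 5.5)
We argue by the direct method of the calculus of variations on the reflexive space $H^{s,p,\de}_0(\Om_{-\de};\R^n)$, treating the linear term $-\lang g,u\rang$ as a continuous perturbation of the convex integral functional. Set $m:=\inf\{\mathcal{W}^{\de,s}_g(v) : v\in H^{s,p,\de}_0(\Om_{-\de};\R^n)\}$. The first step is to show that $m<+\infty$ and, more importantly, that $\mathcal{W}^{\de,s}_g$ is coercive there.

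\textbf{Coercivity.} By \eqref{eq: convex lower bound}, Hölder's inequality and the nonlocal Poincaré inequality (Proposition \ref{Prop: NLPoincare}), every $v\in H^{s,p,\de}_0(\Om_{-\de};\R^n)$ satisfies
\begin{equation*}
\mathcal{W}^{\de,s}_g(v) \ \geq \ -\|a\|_{L^1(\Om)} + c\|D^s_\de v\|^p_{L^p(\Om)} - \|g\|_{L^{p'}(\Om)}\,C\,\|D^s_\de v\|_{L^p(\Om)}.
\end{equation*}
Since $p>1$, the right-hand side tends to $+\infty$ as $\|D^s_\de v\|_{L^p}\to\infty$, so $m>-\infty$. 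Finiteness of $m$ requires a competitor of finite energy. I would take $v=0$, assuming, as is implicit in the statement, that $W(\cdot,0,0)\in L^1(\Om)$; otherwise the claim is vacuous or one must restrict to proper $W$. Given a minimizing sequence $(u_j)$, the bound above together with Poincaré (note that $u_j=0$ on $\Om_\de\setminus\Om$) shows that $(u_j)$ is bounded in $H^{s,p,\de}(\Om;\R^n)$.

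\textbf{Compactness and lower semicontinuity.} By reflexivity, a subsequence satisfies $u_j\rightharpoonup u$ weakly in $H^{s,p,\de}$. Proposition \ref{Prop: CompactEmbedding} (with $g=0$ there) gives $u\in H^{s,p,\de}_0(\Om_{-\de};\R^n)$ and $u_j\to u$ strongly in $L^p(\Om)$. Hence $\lang g,u_j\rang\to\lang g,u\rang$, and, passing to a further subsequence, $u_j\to u$ a.e. Because $D^s_\de$ is linear and bounded, we also have $D^s_\de u_j\rightharpoonup D^s_\de u$ weakly in $L^p(\Om;\R^{n\times n})$.

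It remains to prove
\begin{equation*}
\int_\Om W(x,u,D^s_\de u)\,dx \ \leq \ \liminf_{j\to\infty}\int_\Om W(x,u_j,D^s_\de u_j)\,dx.
\end{equation*}
This is the classical Ioffe-type semicontinuity theorem: $W$ is normal (measurable and lower semicontinuous in $(u,A)$), convex in $A$, and bounded below by the $L^1$ function $a$; the states converge strongly and the gradients weakly. Alternatively, one can simply invoke \cite[Theorem 8.1]{bellido2023non}, whose proof establishes exactly this semicontinuity. Combining it with the convergence of the linear term yields $\mathcal{W}^{\de,s}_g(u)\le m$, so $u$ is a minimizer. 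I expect this step to be the main obstacle, since it is the only point where the joint dependence of $W$ on $(x,u,A)$ matters. If necessary, I would reduce to the cited result by absorbing $-g\cdot u$ into the density. This is legitimate because $W(x,u,A)-g(x)\cdot u$ is still normal and convex in $A$, and the extra term is handled by strong $L^p$ convergence.

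\textbf{Uniqueness.} Suppose $u_1\neq u_2$ are both minimizers. Since $D^s_\de$ is linear and $\lang g,\cdot\rang$ is affine, strict convexity of $W$ in $(u,A)$ gives $\mathcal{W}^{\de,s}_g\big(\tfrac{u_1+u_2}{2}\big)<m$. This strict inequality holds on the set where $(u_1,D^s_\de u_1)\neq(u_2,D^s_\de u_2)$, which has positive measure because $u_1\ne u_2$ in $L^p(\Om)$. That contradicts minimality of $m$. If strict convexity holds only in $A$, the same argument works: by the Poincaré inequality, $D^s_\de u_1=D^s_\de u_2$ a.e. forces $u_1=u_2$.
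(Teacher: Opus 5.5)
Your proof is correct, but it is organized differently from the paper's. The paper does not run the direct method itself. It absorbs the linear term into the density, $\widehat W(x,u,A)=W(x,u,A)-g(x)\cdot u$, and tries to check that $\widehat W$ satisfies the pointwise hypothesis \eqref{eq: convex lower bound}, so that \cite[Theorem 8.1]{bellido2023non} applies verbatim. To do this it combines Young's inequality with the step $c|A|^p\ge C|u|^p$, attributed to the Poincaré inequality. That step is not valid pointwise, because Poincaré only compares $\|u\|_{L^p}$ with $\|D^s_\de u\|_{L^p}$; the resulting bound has also lost its $|A|^p$ term. In fact, for a $W$ satisfying only \eqref{eq: convex lower bound} (e.g.\ independent of $u$), $\widehat W$ admits no bound $\tilde a(x)+\tilde c|A|^p$ uniform in $u$ wherever $g\neq 0$.

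Your route puts the coercivity at the level of the integral functional. There Hölder and Poincaré legitimately give $\mathcal{W}^{\de,s}_g(v)\ge -\|a\|_{L^1}+c\|D^s_\de v\|^p_{L^p}-C\|g\|_{L^{p'}}\|D^s_\de v\|_{L^p}$. The remaining lower semicontinuity is Ioffe's theorem, or equivalently the lsc step inside the proof of \cite[Theorem 8.1]{bellido2023non}. The term $\lang g,u_j\rang\to\lang g,u\rang$ is handled separately by the compact embedding. You pay a few extra lines for an argument that is actually rigorous. Your remark that a finite-energy competitor is needed is also apt: the paper only imposes $\mathcal{W}^{\de,s}_g\not\equiv+\infty$ later, in Subsection \ref{Sec: nonconvex}. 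Note, though, that if $m=+\infty$ existence is trivial and only uniqueness is at stake.

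Your uniqueness argument is fine under strict convexity jointly in $(u,A)$, which covers the $u$-independent $p$-Laplacian density used later. The closing sentence about strict convexity only in $A$ needs more than you state. You need $W$ jointly convex in $(u,A)$, since otherwise the midpoint comparison fails outright. You also need a strict midpoint inequality whenever the $A$-components differ. Strict convexity of each $W(x,u,\cdot)$ plus joint convexity does not give this pointwise. For $W=|A-Lu|^2$ with $L$ linear and $Ly\neq 0$, the points $(0,0)$ and $(y,Ly)$ have distinct $A$-components, yet equality holds at their midpoint.
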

 \begin{proof}
     In order to apply \cite[Theorem 8.1]{bellido2023non} we just need to see that the hypothesis of the lower bound inequality appearing therein is verified.
     Let $\bar{W}$ be defined as
     \begin{equation*}
         \widehat{W}(x,u,A) \ := \  W(x,u,A) -g u.
     \end{equation*}
     Then, by the lower bound on $W$ together with $\ep$-Young inequality, consecutively, we have that for $\ep>0$,
     \begin{eqnarray*}
 \begin{aligned}
           \widehat{W}(x,u,A) &\geq a(x) +c|A|^p- \frac{|g|^{p'}}{\ep^{p'} p'} -\frac{\ep^p|u|^p}{p} \geq  a(x) +C|u|^p- \frac{|g|^{p'}}{\ep^{p'} p'} -\frac{\ep^p|u|^p}{p} \\
          &= a(x) - \frac{|g|^{p'}}{\ep^{p'} p'} + \left(C-\frac{\ep^p}{p}\right)|u|^p ,
     \end{aligned}
     \end{eqnarray*}
     where we have used the nonlocal Poincar\'e inequality from Proposition \ref{Prop: NLPoincare}. Since $ a(x) - \frac{|g|^{p'}}{\ep^{p'} p'}$ is an integrable function, for $\ep>0$ small enough we have that $C-\frac{\ep^p}{p}>0$ and therefore, the lower bound condition is verified. The proof of uniqueness when $W$ is strictly convex is elementary.
 \end{proof}

 We also comment that, if $W$ is polyconvex or quasiconvex together with respective growth conditions 
 then a similar existence result can be obtained (see \cite[Theorem 6.1]{bellido2023minimizers} or \cite[Corollary 2]{cueto2023variational}, respectively). 

\section{Optimal control}\label{Sec: Optimal Control}

In this section we present the core results: the study of optimal control problems based on nonlocal gradients. For this section we take $s \in (0, 1)$ and $\de > 0$ to be fixed except where otherwise noted. As mentioned in the introduction, our aim is to study the existence of minimizers for optimal control problems of the form
\begin{equation*}
    \begin{cases}
			\min_{(u, g) \in H^{s, p, \de}_0(\Om_{-\de}; \R^n) \times Z_{\text{ad}} } \mathcal{F}(u, g) \qquad \text{ such that }  \\
			\dive^s_\de (D_z W_g(x,u,D^s_\de u))=D_y W_g(x,u,D^s_\de u)  \,\text{ in } \Om_{-\de},
		\end{cases}
\end{equation*}
where $W:\Om \times \R^n \times \R^{n \times n} \rightarrow \R$ is an energy density described in Subsection \ref{subsec: minEnergy} under any of the convexity notions recalled in Subsection \ref{subsec: convexity} and $ Z_{\text{ad}}$, the admissible control space, is defined next.

	\subsection{Assumptions on the cost functional}\label{subsec: AssumpCostfunc}
	Our cost functional $\mathcal{F}: L^p(\Om; \R^n) \times Z_{\text{ad}} \rightarrow \R$ in both the convex and non-convex control problems is of the form
	\begin{equation}\label{costFunc}
		\mathcal{F}(u, g) \ := \ \int_{\Om}F(x, u(x))dx + \int_{\Om} \La(x) |g(x)|^{p'} dx,
	\end{equation}
    where we considered the following assumptions on $F$, $\Lambda$, $g$ and $Z_{\text{ad}}$.

   \begin{assumption}[Assumptions on cost functional]\label{Assump: cost}
   We assume \eqref{costFunc} and its arguments satisfy the following conditions:
    \begin{enumerate}
        \item $g\in Z_{\text{ad}}$, where the admissible control space $Z_{\text{ad}}$ is a nonempty, closed, convex, and bounded subset of $L^{p'}(\Om; \R^n)$ that takes the form
	\begin{equation*}\label{Eq: linearAdSet}
        Z_{\text{ad}} \ := \ \{z \in L^{p'}(\Om; \R^n)  \ | \ \mathfrak{a}(x) \preceq z(x) \preceq \mathfrak{b}(x) \ \text{a.e.} \ x \in \Om\},
	\end{equation*}
	for some vector-valued functions $\mathfrak{a}, \mathfrak{b} \in L^{\infty}(\Om; \R^n)$ \footnote{This $Z_{\text{ad}}$ is known in the literature as a \textbf{box constraint}.}.  
     \item $\La \in L^{\infty}(\Om)$ is a strictly positive function, i.e., there exists $\la > 0$ such that $\La(x) \geq \la$ for all $x \in \Om$.
    \item $F: \Om \times \R^n \rightarrow \R$ is a function satisfying the following assumptions:
	\begin{enumerate}[label=\textbf{F\arabic*}]
		\item\label{F1} For all $v \in \R^n$ the mapping $x \mapsto F(x,v)$ is measurable;
		\item\label{F2} For all $x \in \Om$ the mapping $v \mapsto F(x,v)$ is continuous and convex;
		\item\label{F3} There exist $\ell \in L^1(\Om) $ and $c_1> 0$ such that
		\begin{equation*}\label{Gqcoercive}
			|F(x, v)| \ \leq \ c_1|v|^{p_s^*} + \ell(x)
		\end{equation*}
		for all $x \in \Om$ and all $v \in \R^n$, where $p_s^* := \frac{np}{n-sp}$.
	\end{enumerate}
    \end{enumerate}
    \end{assumption}

	As an example, the assumptions just listed above include the special case
	\begin{equation*}\label{costFuncSpec}
		\mathcal{F}(u, g) \ := \ \frac{1}{p}\|u - u_{\des}\|^p_{L^p(\Om; \R^n)} + \frac{\la}{p'}\|g\|^{p'}_{L^{p'}(\Om; \R^n)}
	\end{equation*}
	where $\la > 0$ is a regularization parameter, and $u_{\des} \in L^p(\Om; \R^n)$ is fixed.

Generically, the optimal control problem we are studying can then be stated as solving
 \begin{equation*}
     \mathcal{F}(\bar{u},\bar{g}) \ = \ \min_{(u,g)\in\mathcal{A}} \mathcal{F}(u,g),
 \end{equation*}
	where $\mathcal{A}$ is the admissible class of pairs, given by
 \begin{equation*}
     \mathcal{A} \ := \ \{ (u,g) \in H^{s,p,\de}_0(\Om_{-\de},\Rn)\times Z_{ad} \, | \, u \in \argmin_{v \in H^{s,p,\de}_0(\Om_{-\de},\Rn)} \mathcal{W}^{\de,s}_g (v) \}.
 \end{equation*}
 Again, $\mathcal{W}^{\de,s}_g$ is an energy functional with the properties described in Subsection \ref{subsec: minEnergy}. Specific use cases of this formulation will be presented throughout the remainder of the document.



		\subsection{Optimal control with convex $p$-Laplacian energy}\label{Sec: pLapEner}
		This section is concerned with optimal control problems with state equation energy corresponding to that of the nonlocal $p$-Laplacian, i.e,. the $p$-Dirichlet nonlocal energy, as its special structure allows us for a more complete analysis (including uniqueness of solutions), than in the non-convex case. Concretely, the inhomogeneous nonlocal $p$-Laplacian corresponds to the energy \eqref{Eq: NLEnerg} whose energy density is given by
	\begin{equation}\label{Eq: pLapDens}
		W(x, u, A) \ := \ \frac{1}{p}\mathbb{A}(x)|A|^{p - 2}A : A,
	\end{equation}
    where $|\cdot|$ denotes the Fröbenius norm of a matrix. Moreover, the coefficient $\mathbb{A} \in L^{\infty}(\R^n; \R^{n\times n})$ is uniformly elliptic and symmetric: $\mathbb{A}(x) = \mathbb{A}(x)^T$ for all $x \in \R^n$, and there exists $\mu > 0$ so that
	\begin{equation}\label{Eq: UnifEllip}
		\mathbb{A}(x)\xi : \xi \ \geq \ \mu|\xi|^2 \ \quad \fa \, x \in \R^n, \ \xi \in \R^{n \times n}.
	\end{equation}
  This implies that the spaces on which our energies are finite are unambiguous with respect to the choice of $\mathbb{A}$.

  For the study of the nonlocal $p$-Laplacian energy, we explore the equilibrium conditions. In particular, we consider the directional derivative, i.e., we will denote, for $s \in (0, 1)$ and $\de > 0$, the mapping $\mathcal{Y}_{\de, s}: H^{s, p, \de}(\Om; \R^n) \times H^{s, p, \de}(\Om; \R^n) \rightarrow \R$ via
	\begin{equation*}\label{Eq: Ydes}
		\mathcal{Y}_{\de, s}(u, v) \ := \ \int_{\Om}\mathbb{A}(x)|D^s_{\de}u(x)|^{p - 2}D^s_{\de}u(x) : D^s_{\de}v(x)dx.
	\end{equation*}
  We start by providing the formal problem statements in the special case where our energy takes a convex, nonlocal $p$-Laplacian form. 
  This is arguably the simplest case in which our analysis is meaningful.
  
  We say that a pair $(u, g) \in H^{s, p, \de}_0(\Om_{-\de}; \R^n) \times Z_{\text{ad}}$ verifies the  nonlocal state (constraint) system in weak form if
  \begin{equation}\label{stateEqnWkFormNL}
		\begin{cases}
			\mathcal{Y}_{\de, s}(u, v) \ = \ \lang g, v\rang \quad \ \fa v \in H^{s, p, \de}_0(\Om_{\de}; \R^n) \\
			u \ = \ 0, \ x \in \Om_{\de}\setminus \Om_{-\de}
		\end{cases}.
	\end{equation}
  Alternatively, its strong form can be written as
  \begin{equation*}\label{stateEqnStrongFormNL}
		\begin{cases}
			-\dive^s_{\de}(\mathbb{A}(x)|D^s_{\de}u(x)|^{p - 2}D^s_{\de}u(x)) \ = \ g(x), \ x \in \Om \\
			u \ = \ 0, \ x \in \Om_{\de}\setminus \Om_{-\de}
		\end{cases}.
	\end{equation*}
	We will call this equation the \textbf{inhomogeneous nonlocal $p$-Laplacian equation}.

	\begin{problem}[Convex nonlocal problem]\label{nlProbStatement}
    Our \underline{convex} nonlocal problem is to solve
		\begin{equation*}\label{stateMinNL}
			\mathcal{F}(\overline{u}, \overline{g}) \ = \ \min_{(u, g) \in \Alg^{\de}_s}\mathcal{F}(u, \ g),
		\end{equation*} 
		where the admissible class of pairs is given by
		\begin{eqnarray*}\label{nonlocalAdmiClass}
			\begin{aligned}
				\Alg^{\de}_s \ := \{(u, g) \in H^{s, p, \de}_0(\Om_{-\de}; \R^n) \times Z_{\text{ad}} \ |  \, (u,g) \ \text{ verifies } \ \eqref{stateEqnWkFormNL} \}.
			\end{aligned}
		\end{eqnarray*}
	\end{problem}	
		%
        %
        The first thing we notice is that the set $\mathcal{A}^\de_s$ is convex, as a consequence of the convexity of the energy.
        \begin{lemma} \label{le: admissible set convex}
            The set $\mathcal{A}^\de_s$ is convex.
        \end{lemma}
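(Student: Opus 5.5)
The plan is to check convexity straight from the definition. Take $(u_1,g_1),(u_2,g_2)\in\Alg^{\de}_s$ and $t\in[0,1]$, and set $u_t:=tu_1+(1-t)u_2$, $g_t:=tg_1+(1-t)g_2$. We must show $(u_t,g_t)\in\Alg^{\de}_s$.

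The easy parts come first. $Z_{\text{ad}}$ is convex by Assumption \ref{Assump: cost}, being a box constraint, so $g_t\in Z_{\text{ad}}$. $H^{s,p,\de}_0(\Om_{-\de};\R^n)$ is a linear space, so $u_t$ lies in it and vanishes on $\Om_\de\setminus\Om_{-\de}$. What remains is the weak state equation \eqref{stateEqnWkFormNL} for the pair $(u_t,g_t)$. The right-hand side is linear in $g$, since $\lang g_t,v\rang=t\lang g_1,v\rang+(1-t)\lang g_2,v\rang$. Hence it suffices to show
\[
\mathcal{Y}_{\de,s}(u_t,v) \ = \ t\,\mathcal{Y}_{\de,s}(u_1,v)+(1-t)\,\mathcal{Y}_{\de,s}(u_2,v)
\]
for every test function $v$. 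This uses only linearity of $D^s_\de$, which gives $D^s_\de u_t=tD^s_\de u_1+(1-t)D^s_\de u_2$.

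The main obstacle is this last identity. It holds when $p=2$: then $\mathcal{Y}_{\de,s}(u,v)=\int_\Om\mathbb{A}D^s_\de u:D^s_\de v$ is bilinear and the argument closes. For $p\neq2$ the map $A\mapsto|A|^{p-2}A$ is not affine, so the identity is not automatic, and I do not see how convexity of the energy alone would supply it. Convexity of $W$ gives uniqueness of the minimizer for each fixed $g$ (Proposition \ref{minimizationOfNLEnergy}). That makes $\Alg^{\de}_s$ the graph of a solution map $g\mapsto u(g)$. Such a graph is convex exactly when the map is affine.

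So before writing the proof I would test the claim with the scaling of the equation. If $u$ solves \eqref{stateEqnWkFormNL} with datum $g$, then $\lambda^{1/(p-1)}u$ solves it with datum $\lambda g$. Now pair $(u,g)$ with $(0,0)$; note $(0,0)$ is only admissible if $0\in Z_{\text{ad}}$. The midpoint $(u/2,g/2)$ is admissible only if $u/2=2^{-1/(p-1)}u$. That forces $u=0$ whenever $p\neq2$.

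The plan is therefore: prove the lemma as outlined for $p=2$, which covers the linear elastic setting. For $p\neq2$, either restrict the statement to $p=2$, or replace convexity of $\Alg^{\de}_s$ by a weaker property that suffices downstream. The natural candidate is weak closedness of $\Alg^{\de}_s$. I would prove it using the compact embedding (Proposition \ref{Prop: CompactEmbedding}) and the monotonicity (Minty) trick for $A\mapsto\mathbb{A}|A|^{p-2}A$.
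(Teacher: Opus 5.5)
Your diagnosis is correct, and the flaw is in the paper's proof, not in your reasoning. The paper derives the lemma from \eqref{eq: convex energy inequality}, that is, from joint convexity of $(u,g)\mapsto\mathcal{W}^{\de,s}_g(u)$ along the segment, citing convexity of $W$ and linearity in $g$. Those two facts do not give joint convexity, because the coupling $-\lang g,u\rang$ is an indefinite bilinear form. Expanding it yields only
\[
\mathcal{W}^{\de,s}_{\tilde g}(\tilde u)\ \le\ \la\,\mathcal{W}^{\de,s}_{g_1}(u_1)+(1-\la)\,\mathcal{W}^{\de,s}_{g_2}(u_2)+\la(1-\la)\lang g_1-g_2,u_1-u_2\rang ,
\]
and the correction term is positive in general. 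Take your pairs $(u,g)$ and $(0,0)$, with $\la=\tfrac12$ and $m:=\lang g,u\rang=\mathcal{Y}_{\de,s}(u,u)>0$. Then \eqref{eq: convex energy inequality} reads $\bigl(\tfrac{2^{-p}}{p}-\tfrac14\bigr)m\le\tfrac12\bigl(\tfrac1p-1\bigr)m$. This is false for every $p\in(1,\infty)$, including $p=2$, where it says $-\tfrac18 m\le-\tfrac14 m$.

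So the paper's route fails even in the linear case. There, your argument through bilinearity of $\mathcal{Y}_{\de,s}$ is the correct proof. The paper's second step, affinity of $g\mapsto\mathcal{W}^{\de,s}_g(v)$ for fixed $v$, is fine but cannot repair the first.

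Your homogeneity counterexample is also right, and it does not need $0\in Z_{\text{ad}}$. Suppose $g$ and $cg$ (with $c>0$, $c\neq1$) both lie in $Z_{\text{ad}}$, and $g$ does not vanish a.e.\ on $\Om_{-\de}$. Then the midpoint datum has state $\bigl(\tfrac{1+c}{2}\bigr)^{1/(p-1)}S_{\de,s}(g)$. For $p\neq2$ this differs from $\tfrac{1+c^{1/(p-1)}}{2}S_{\de,s}(g)$, by strict convexity or concavity of $t\mapsto t^{1/(p-1)}$. Hence for $p\neq2$ the lemma fails for any such $Z_{\text{ad}}$, for instance any nontrivial box with $\mathfrak{a}\le0\le\mathfrak{b}$. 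Restricting to $p=2$ is the right correction.

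The part of your plan I would change is the proposed substitute. In the paper the lemma is used only for the uniqueness statement of Theorem \ref{WPNLControlThm}. Through the analogous convexity claim for $\Alg^{\loc}$ behind Theorem \ref{WPLControlThm}, it is also what identifies the limits in Theorems \ref{convMinss->1} and \ref{De->0: convMinss}. Existence already follows from Theorem \ref{Th: abstractMinimizationConvex} and the compactness of $S_{\de,s}$, with no structural property of $\Alg^{\de}_s$ needed. Weak closedness of $\Alg^{\de}_s$ says nothing about uniqueness.

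For $p\neq2$ you therefore have two options. One is a genuinely different uniqueness argument; convexity of the reduced functional $j_{\de,s}$ is not automatic, since $S_{\de,s}$ is nonlinear. The other is to weaken the convergence theorems to subsequential convergence toward some optimal pair. The argument of Theorem \ref{convMinss->1} still yields that, because the state equation for a fixed datum remains uniquely solvable.
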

        \begin{proof}
            Let us assume we have two admissible pairs, $(u_1,g_1),\, (u_2,g_2) \in \mathcal{A}^{\de}_s$. Then, if we consider a convex combination, $(\tilde{u},\tilde{g})=\la(u_1,g_1) +(1-\la)(u_1,g_1)$, $(\tilde{u},\tilde{g})$, clearly $(\tilde{u},\tilde{g})\in H^{s, p, \de}_0(\Om_{-\de}; \R^n) \times Z_{\text{ad}}$. In addition, by the convexity of $W$ in \eqref{Eq: pLapDens} and linearity of the energy densities with respect to $g$, we have that
    \begin{equation} \label{eq: convex energy inequality}
        \mathcal{W}^{\de,s}_{\tilde{g}}(\tilde{u}) \leq \la \mathcal{W}^{\de,s}_{g_1}(u_1) + (1-\la)\mathcal{W}^{\de,s}_{g_2}(u_2).
    \end{equation}
     If we use now that $u_1$ and $u_2$ are minimizers of $\mathcal{W}^{\de,s}_{g_1}$ and $\mathcal{W}^{\de,s}_{g_2}$, respectively, we have that \eqref{eq: convex energy inequality} leads to
      \begin{equation*}
        \mathcal{W}^{\de,s}_{\tilde{g}}(\tilde{u}) \leq \la \mathcal{W}^{\de,s}_{g_1}(v) + (1-\la)\mathcal{W}^{\de,s}_{g_2}(v)=\mathcal{W}^{\de,s}_{\tilde{g}}(v),
    \end{equation*}
    for every $v \in H^{s,p,\de}_0(\Om_{-\de}; \R^n)$, where the last equality is a consequence of the energy defined by \eqref{Eq: NLEnerg} and the linearity with respect to $g$. Hence, $\tilde{u}$ is a minimzer of $\mathcal{W}^{\de,s}_{\tilde{g}}$, and therefore, $(\tilde{u},\tilde{g})$ is an admissible pair.
        \end{proof}
        The existence of a unique minimizer of $\bar{W}(u)=\frac{1}{p}\mathcal{Y}_{\de, s}(u, u) - \int_{\Om} g \cdot u$ is guaranteed by applying Proposition \ref{minimizationOfNLEnergy} to energy density \eqref{Eq: pLapDens}.  Given that $\bar{W}(x,\cdot,\cdot)\in C^1$ for a.e. $x\in \Om$, such a minimizer is the weak solution to the equation \eqref{stateEqnWkFormNL} (see \cite[Theorem 8.2]{bellido2023non}). This existence and uniqueness allows us to write the state variables in the cost functional in terms of the control ones, which motivates the next definition.
        \begin{definition}
            We define the operator   $S_{\de, s}: L^{p'}(\Om; \R^n) \rightarrow H^{s, p, \de}_0(\Om_{-\de}; \R^n)$ as the solution mapping such that the pair $(g,S_{\de,s}(g))$ verifies \eqref{stateEqnWkFormNL}.
        \end{definition}
     In particular, we have that such a solution mapping is compact.

\begin{lemma}\label{compactnessNLStateOperator}
	The operator $S_{\de,s}: L^{p'}(\Omega; \R^n) \rightarrow H^{s,p,\de}_0(\Omega_{-\de}; \R^n) \hookrightarrow L^p(\Om; \R^n)$ is compact. 
\end{lemma}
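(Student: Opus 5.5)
We show that $S_{\de,s}$ maps weakly convergent sequences in $L^{p'}(\Om;\R^n)$ to strongly convergent sequences in $L^p(\Om;\R^n)$. Since $L^{p'}$ is reflexive for $p\in(1,\infty)$, every bounded sequence has a weakly convergent subsequence, so this gives that bounded sets are mapped into relatively compact sets. The argument has three steps: an a priori bound, a compactness extraction, and an identification of the limit through the weak formulation \eqref{stateEqnWkFormNL}.

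\emph{Step 1 (uniform bound).} Let $g_j\rightharpoonup g$ weakly in $L^{p'}$, so $\|g_j\|_{L^{p'}}\le M$, and set $u_j:=S_{\de,s}(g_j)$. Testing \eqref{stateEqnWkFormNL} with $v=u_j$ and using the ellipticity \eqref{Eq: UnifEllip} gives
\[
\mu\|D^s_\de u_j\|_{L^p}^p\preceq \mathcal{Y}_{\de,s}(u_j,u_j)=\langle g_j,u_j\rangle\le M\|u_j\|_{L^p}\le MC\|D^s_\de u_j\|_{L^p},
\]
where the last inequality is the nonlocal Poincaré inequality, Proposition \ref{Prop: NLPoincare}. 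If ellipticity is only available in the quadratic form \eqref{Eq: UnifEllip}, one applies it with $\xi=D^s_\de u$ and multiplies by $|D^s_\de u|^{p-2}$. It follows that $\|D^s_\de u_j\|_{L^p}^{p-1}\preceq M$. Since $u_j=0$ off $\Om_{-\de}$, the whole sequence is bounded in $H^{s,p,\de}_0(\Om_{-\de};\R^n)$.

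\emph{Step 2 (extraction).} By reflexivity, a subsequence satisfies $u_j\rightharpoonup u$ weakly in $H^{s,p,\de}$. Proposition \ref{Prop: CompactEmbedding}, applied with $g=0$ and $q=p<p^*_s$, then yields $u_j\to u$ strongly in $L^p(\Om;\R^n)$ and $u\in H^{s,p,\de}_0(\Om_{-\de};\R^n)$.

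\emph{Step 3 (identification of the limit).} We must show $u=S_{\de,s}(g)$. If so, uniqueness of the state implies that every subsequence has a further subsequence converging to the same limit, so the whole sequence converges and $S_{\de,s}$ is compact (indeed completely continuous). This step is the main obstacle, since the operator $\mathcal{Y}_{\de,s}$ is nonlinear and weak convergence cannot simply be passed through it. We avoid this by using the variational characterization: $u_j$ minimizes $\mathcal{W}^{\de,s}_{g_j}$, so
\[
\int_\Om W(x,D^s_\de u_j)\,dx-\langle g_j,u_j\rangle\le \int_\Om W(x,D^s_\de v)\,dx-\langle g_j,v\rangle\quad\text{for all } v.
\]
The map $v\mapsto\int W(x,D^s_\de v)$ is convex and continuous, hence weakly lower semicontinuous along $u_j\rightharpoonup u$. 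Moreover $\langle g_j,u_j\rangle\to\langle g,u\rangle$, because it pairs a weakly convergent sequence ($g_j$ in $L^{p'}$) with a strongly convergent one ($u_j$ in $L^p$); this is where Step 2 is used. Also $\langle g_j,v\rangle\to\langle g,v\rangle$ for each fixed $v$. Passing to the limit therefore shows that $u$ minimizes $\mathcal{W}^{\de,s}_g$. By the uniqueness in Proposition \ref{minimizationOfNLEnergy} and the equivalence between minimizers and weak solutions \cite[Theorem 8.2]{bellido2023non}, we conclude $u=S_{\de,s}(g)$.
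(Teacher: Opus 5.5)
Your proof is correct. Steps 1--2 are essentially the paper's whole argument: test \eqref{stateEqnWkFormNL} with $u$, use ellipticity, H\"older and the nonlocal Poincar\'e inequality to get $\|S_{\de,s}g\|_{H^{s,p,\de}}\preceq\|g\|_{L^{p'}}^{1/(p-1)}$, and then compose this bounded map with the compact embedding $H^{s,p,\de}_0(\Om_{-\de};\R^n)\hookrightarrow L^p(\Om;\R^n)$.

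Because $S_{\de,s}$ is nonlinear, the paper's argument only shows that bounded sets of controls are mapped into relatively compact sets of states. It does not identify the limit of $S_{\de,s}g_j$ when $g_j\rightharpoonup g$, and it does not even give continuity of $S_{\de,s}$. Your Step 3 closes this. You pass to the limit in the minimality inequality using weak lower semicontinuity of the convex part, the weak--strong convergence $\langle g_j,u_j\rangle\to\langle g,u\rangle$, and uniqueness of the minimizer. This upgrades the conclusion to complete continuity: $S_{\de,s}g_j\to S_{\de,s}g$ in $L^p$ along the whole sequence.

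That stronger property is exactly what the proof of Theorem \ref{Th: abstractMinimizationConvex} invokes when it writes $Sw_k\to S\bar w$, and it is needed there for the lower semicontinuity of $j$. So your version is longer but better matched to how the lemma is used. For a linear operator the two notions coincide, which is presumably why the paper does not distinguish them.

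Your Step 3 uses nothing beyond what the paper already relies on to define $S_{\de,s}$: that the weak solution of \eqref{stateEqnWkFormNL} is the unique minimizer of a strictly convex energy. That identification is literally valid when $\mathbb{A}$ is a scalar multiple of the identity or $p=2$. This caveat applies equally to the paper's setup and to yours.
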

\begin{proof}
	For each right-hand side $g \in L^{p'}(\Omega; \R^n)$, let $u$ be the unique solution of \eqref{stateEqnWkFormNL}, $u \in H^{s,p,\de}_0(\Om_{-\de}; \R^n)$. Then, by the uniform ellipticity estimate \eqref{Eq: UnifEllip} and H\"older's inequality ,
	\begin{eqnarray*}
 \begin{aligned}
		\|D^s_\de u\|_{L^p(\Omega; \R^{n \times n})}^p &\lesssim \int_{\Om}\mathbb{A}\left| D^s_\de u \right|^{p - 2}D^s_{\de} u: D^s_{\de} u=\int_{\Omega} g \cdot u \\ &\leq \|g\|_{L^{p'}(\Omega; \R^n)} \| u\|_{L^p(\Omega; \R^n)} \leq C\|g\|_{L^{p'}(\Omega; \R^n)} \| D^s_\de u\|_{L^p(\Omega; \R^{n \times n})},
  \end{aligned}
	\end{eqnarray*}
	where $C>0$ is the constant from the Nonlocal Poincar\'e inequality \eqref{Eq: NLPoincare}.
	Thus, 
	\begin{equation*} 
		\|D^s_\de u\|_{L^p(\Omega; \R^{n\times n})} \leq C \|g\|_{L^{p'}(\Omega; \R^n)}^{\frac{1}{p-1}}.
	\end{equation*}
	Finally, applying Poincar\'e's inequality again yields
	\begin{equation*} \label{eq: S bounded operator}
		\|u\|_{H^{s,p,\de}(\Omega; \R^n)} \leq (C+1) \|D^s_\de u \|_{L^p(\Omega; \R^{n\times n})} \leq C \|g\|_{L^{p'}(\Omega; \R^n)}^{\frac{1}{p-1}},
	\end{equation*}
	implying that $S_{\de, s}$ is bounded. The compactness of $S_{\de,s}$ is a consequence of the composition of a bounded operator with the compact embedding of $H^{s,p,\de}_0(\Om_{-\de}; \R^n)$ into $L^p(\Om; \R^n)$.
\end{proof}
Next, we show a slight generalization of the abstract minimization result \cite[Theorem 4.1]{mengesha2023control}, for use in our nonlocal framework (and later, for the local convex problem as well). Unlike that paper, however, we absorb the appearance of a regularization parameter $\la$ into the weight function $\La$.

\begin{theorem}[Existence]\label{Th: abstractMinimizationConvex}
	Let $S: L^{p'}(\Om; \R^n) \to L^p(\Om; \R^n)$ be a compact operator and $G: L^p(\Om; \R^n)\to \R$ be lower semi-continuous. Let $Z_{ad}$ be defined as in \eqref{Eq: linearAdSet}, and define $j: Z_{ad}\to \R$ by
	\begin{equation*}
		j(g) \ := \ G(Sg) + \int_{\Om}\La(x) |g(x)|^{p'}\,dx
	\end{equation*}
	for some positive $\La \in L^1(\Om)$. Then, the reduced optimization problem
	\begin{equation*}
		\min_{g\in Z_{ad}} j(g)
	\end{equation*}
	has a solution $\tilde{g}$. 
\end{theorem}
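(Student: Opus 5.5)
We will use the direct method of the calculus of variations on the reduced functional $j$ over $Z_{ad}$.

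\emph{Step 1: finite infimum and a minimizing sequence.} The second term of $j$ is nonnegative. Since $Z_{ad}$ is bounded in $L^{p'}(\Om;\R^n)$ and $S$ is compact, hence bounded, the set $S(Z_{ad})$ is relatively compact in $L^p$. A lower semicontinuous function is bounded below on a compact set, so $G$ is bounded below on $\overline{S(Z_{ad})}$. Therefore $m:=\inf_{Z_{ad}} j>-\infty$. If $m=+\infty$ there is nothing to prove, since any element of the nonempty set $Z_{ad}$ is a minimizer. Otherwise, choose $\{g_k\}\subset Z_{ad}$ with $j(g_k)\to m$.

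\emph{Step 2: compactness.} Because $Z_{ad}$ is bounded and $L^{p'}$ is reflexive ($1<p'<\infty$), a subsequence satisfies $g_k\rightharpoonup \tilde g$ weakly in $L^{p'}$. The set $Z_{ad}$ is convex and closed (the box constraints pass to a.e. limits of strongly convergent subsequences), so by Mazur's lemma it is weakly closed and $\tilde g\in Z_{ad}$. Since $S$ is compact, $Sg_k\to S\tilde g$ strongly in $L^p$, after a further subsequence if needed.

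This is the main obstacle. For a nonlinear compact operator, weak-to-strong continuity is not automatic: compactness only gives a strongly convergent subsequence $Sg_k\to w$, and we must still show $w=S\tilde g$. In our application $S=S_{\de,s}$, and the identification is done as follows. The bounds of Lemma \ref{compactnessNLStateOperator} give $u_k:=S g_k\rightharpoonup u$ in $H^{s,p,\de}_0$, with $u_k\to u$ in $L^p$ by Proposition \ref{Prop: CompactEmbedding}. We then pass to the limit in the minimality property $\mathcal W_{g_k}(u_k)\le \mathcal W_{g_k}(v)$. On the left we use weak lower semicontinuity of the convex energy together with $\langle g_k,u_k\rangle\to\langle \tilde g,u\rangle$, a weak--strong pairing. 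On the right, $\langle g_k,v\rangle\to\langle\tilde g,v\rangle$. Hence $u$ minimizes $\mathcal W_{\tilde g}$, and by uniqueness $u=S\tilde g$. In the abstract statement I would interpret ``compact'' in this completely continuous sense, mapping weakly convergent sequences to strongly convergent ones, which is what the lemma effectively provides.

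\emph{Step 3: lower semicontinuity.} Since $G$ is lower semicontinuous in $L^p$, we get $G(S\tilde g)\le\liminf_k G(Sg_k)$. The map $g\mapsto\int_\Om\La|g|^{p'}$ is convex and strongly continuous on $L^{p'}$; the hypothesis $\La\in L^1$ should be read together with the standing assumption $\La\in L^\infty$ from Assumption \ref{Assump: cost} so that the integral is finite. A convex, strongly continuous functional is weakly lower semicontinuous, so $\int_\Om\La|\tilde g|^{p'}\le\liminf_k\int_\Om\La|g_k|^{p'}$. Adding the two inequalities and using superadditivity of $\liminf$ gives $j(\tilde g)\le\liminf_k j(g_k)=m$. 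Hence $\tilde g\in Z_{ad}$ attains the minimum.
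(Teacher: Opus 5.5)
Your argument is correct and follows the same direct-method route as the paper: a lower bound, a minimizing sequence, weak compactness and weak closedness of $Z_{ad}$, then lower semicontinuity. There are two differences in execution.

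\textbf{The lower bound.} The paper argues by contradiction, extracting a weakly convergent subsequence of controls and using $Sw_k\to S\bar w$. You instead use only that $S(Z_{ad})$ is relatively compact in $L^p$ and that a real-valued lower semicontinuous function is bounded below on a compact set. This needs nothing beyond compactness in the usual nonlinear sense.

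\textbf{Passing to the limit in $Sg_k$.} This is the more substantive difference. The paper goes from $g_k\rightharpoonup\bar g$ to $Sg_k\to S\bar g$ simply by invoking compactness of $S$. For a nonlinear operator this is a weak-to-strong (complete) continuity property, and it does not follow from compactness alone. For instance, $Sg=\|g\|_{L^{p'}}v$ is continuous with one-dimensional range but is not weak-to-strong continuous. Moreover, Lemma \ref{compactnessNLStateOperator} only proves boundedness plus the compact embedding. Your identification of the limit is what actually justifies this step for $S_{\de,s}$. You pass to the limit in $\mathcal W^{\de,s}_{g_k}(u_k)\le\mathcal W^{\de,s}_{g_k}(v)$ via weak lower semicontinuity and the weak--strong pairing, then use uniqueness, essentially as in Lemma \ref{closureOfMinimization}. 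Since every subsequence has a further subsequence converging to $S\tilde g$, the whole sequence converges.

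\textbf{A small correction.} You do not need to upgrade $\La\in L^1$ to $\La\in L^\infty$. Because $Z_{ad}$ is bounded in $L^\infty$, the penalty term is finite on $Z_{ad}$, as the paper remarks right after the proof. Also, $g\mapsto\int_\Om\La|g|^{p'}$ is convex and strongly lower semicontinuous by Fatou's lemma, hence weakly lower semicontinuous. That is all your Step 3 uses.
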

\begin{proof}
	Firstly, we prove that $j$ has an infimum. Since the second term is non-negative, we argue by contradiction to show that $G$ is bounded from below. Thus, let us assume there is a sequence $\{w_k\}_{k=1}^{\infty}\subset Z_{ad}$ such that
	\begin{equation*}
		G(Sw_k) \ < \ -k
	\end{equation*}
	for every $k\in \N^+$. As $Z_{ad}$ is a bounded, closed subset of $L^{p'}(\Om; \R^n)$, we have that there is a sub-sequence (not re-labeled) and $\bar{w} \in Z_{ad}$ such that $w_k \rightharpoonup \bar{w}$ in $L^{p'}(\Om; \R^n)$. Since $S$ is a compact operator, we have that $Sw_k \to S\bar{w}$ as $k\to \infty$. By the lower semi-continuity of $G$, we have that
	\begin{equation*}
		G(S\bar{w})\leq \liminf_{k\to \infty} G(Sw_k) \ = \ -\infty,
	\end{equation*}
	reaching a contradiction, as $G$ only assumes finite values.
	
	Secondly, we prove that such an infimum is indeed a minimum. Hence, let $j_0:= \inf_{g\in Z_{ad}} j(g)$, and then there exists a sequence $\{g_k\}_{k=1}^\infty\subset Z_{ad}$ such that $\lim_{k\to \infty} j(g_k)=j_0$ as $k\to \infty$. Again, since $Z_{ad}$ is closed and bounded, there is a sub-sequence (not re-labeled) and $\bar{g}\in Z_{ad}$ such that $g_k \rightharpoonup \bar{g}$ in $L^{p'}(\Om; \R^n)$. We then conclude existence as in \cite[Theorem 4.1]{mengesha2023control}. 
    
	
\end{proof}
We remark that for this result, having $\La \in L^1(\Om)$ is sufficient since $Z_{\text{ad}}$ is a bounded subset of $L^{\infty}(\Om; \R^n)$; however, to complete the program we will later need that $\La \in L^{\infty}(\Om)$. Now we conclude with the well-posedness result for the convex nonlocal control problem.

\begin{theorem}[Well-posedness of nonlocal control problem]\label{WPNLControlThm}
	There exists a pair $(\overline{u}, \overline{g}) \in H^{s, p, \de}_0(\Om_{-\de}; \R^n) \times Z_{\text{ad}}$ solving Problem \ref{nlProbStatement}. In addition, this solution is unique.
\end{theorem}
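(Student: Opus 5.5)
The plan is to reduce Problem \ref{nlProbStatement} to the reduced problem of Theorem \ref{Th: abstractMinimizationConvex} through the solution operator $S_{\de,s}$. By Proposition \ref{minimizationOfNLEnergy} applied to the density \eqref{Eq: pLapDens}, and the identification of minimizers with weak solutions of \eqref{stateEqnWkFormNL}, we have
\[
(u,g)\in\Alg^\de_s \iff g\in Z_{\text{ad}} \text{ and } u=S_{\de,s}(g).
\]
Hence $\mathcal{F}(u,g)=j(g):=G(S_{\de,s}g)+\int_\Om\La|g|^{p'}$ with $G(v):=\int_\Om F(x,v(x))\,dx$.

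\textbf{Existence.} I take $S=S_{\de,s}$, which is compact from $L^{p'}$ to $L^p$ by Lemma \ref{compactnessNLStateOperator}. The point to check is that $G$ is lower semicontinuous (indeed continuous) and finite on $L^p$. Assumption \ref{F2} with \ref{F1} makes $F$ Carathéodory, so continuity follows from the growth bound \ref{F3} via a Nemytskii/dominated convergence argument.

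Here lies the main obstacle: \ref{F3} is stated with exponent $p^*_s>p$, so $G$ need not be finite on all of $L^p$. I would handle this in one of two ways.
\begin{itemize}
\item Work in $L^q$ for $q<p^*_s$ close to it, using Proposition \ref{Prop: CompactEmbedding}: $S_{\de,s}$ is compact into $L^q$, and $G$ is finite on the range of $S_{\de,s}$, which lies in $H^{s,p,\de}_0\subset L^{p^*_s}$ by the fractional Sobolev embedding.
\item Alternatively, since $S_{\de,s}g_k\to S_{\de,s}\bar g$ strongly in $L^p$, pass to an a.e.\ convergent subsequence and apply Fatou's lemma to $F+c_1|v|^{p^*_s}+\ell\ge 0$. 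This needs a uniform $L^{p^*_s}$ bound on $S_{\de,s}g_k$, which follows from boundedness of $S_{\de,s}$ and the embedding. This suffices for the liminf inequality.
\end{itemize}
The infimum/minimizing-sequence argument of Theorem \ref{Th: abstractMinimizationConvex} then goes through. The $\La$-term is weakly lower semicontinuous in $L^{p'}$, being convex and continuous, and $Z_{\text{ad}}$ is weakly closed. Setting $\bar u=S_{\de,s}\bar g$ yields a solution $(\bar u,\bar g)$.

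\textbf{Uniqueness.} I use strict convexity of $\mathcal{F}$ on the convex set $\Alg^\de_s$ (Lemma \ref{le: admissible set convex}). The map $g\mapsto\int\La|g|^{p'}$ is strictly convex since $p'>1$ and $\La\ge\la>0$, and $u\mapsto\int F(x,u)$ is convex by \ref{F2}. Hence $(u,g)\mapsto\mathcal{F}(u,g)$ is convex and strictly convex in $g$.

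Suppose $(u_1,g_1)\ne(u_2,g_2)$ are both minimizers. If $g_1=g_2$, then $u_1=S_{\de,s}g_1=u_2$, a contradiction; so $g_1\ne g_2$. Their midpoint lies in $\Alg^\de_s$ by Lemma \ref{le: admissible set convex} and has strictly smaller cost, contradicting minimality.

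Note that I avoid claiming convexity of $g\mapsto G(S_{\de,s}g)$, since $S_{\de,s}$ is nonlinear. Convexity of the admissible set of pairs is exactly what replaces it.
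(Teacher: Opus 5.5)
\paragraph{Main gap: uniqueness.} Your existence half follows the paper's route, and your uniqueness argument is the paper's midpoint argument essentially verbatim. The problem is that both rest on Lemma \ref{le: admissible set convex}, which is false in general for $p\neq 2$.

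Since $(u,g)\in\Alg^{\de}_s$ iff $u=S_{\de,s}g$, the set $\Alg^{\de}_s$ is the graph of $S_{\de,s}$ over $Z_{\text{ad}}$. A graph is convex iff the map preserves convex combinations. But $\mathcal{Y}_{\de,s}(\tau u,v)=\tau^{p-1}\mathcal{Y}_{\de,s}(u,v)$ for $\tau>0$, which gives $S_{\de,s}(tg)=t^{1/(p-1)}S_{\de,s}(g)$.

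For a concrete failure, take $\mathfrak{a}\equiv(1,\dots,1)$, $\mathfrak{b}\equiv(2,\dots,2)$, $g_1\equiv(1,\dots,1)$ and $g_2=2g_1$. Then $S_{\de,s}g_1\neq 0$, because $g_1\neq 0$ on $\Om_{-\de}$. The midpoint of $(S_{\de,s}g_1,g_1)$ and $(S_{\de,s}g_2,g_2)$ has first component $\tfrac12(1+2^{1/(p-1)})S_{\de,s}g_1$. On the other hand, $S_{\de,s}(\tfrac32 g_1)=(\tfrac32)^{1/(p-1)}S_{\de,s}g_1$. These differ for $p\neq 2$, so the midpoint is not admissible.

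The lemma's proof goes wrong at its first inequality. It treats $(u,g)\mapsto-\langle g,u\rangle$ as jointly convex, but this map is bilinear. The defect $\la(1-\la)\langle g_1-g_2,u_1-u_2\rangle$ is nonnegative by monotonicity of the state operator, which is the wrong sign.

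Your closing remark actually exhibits the contradiction. Convexity of $\Alg^{\de}_s$ is \emph{equivalent} to $S_{\de,s}$ being affine on $Z_{\text{ad}}$. So it cannot stand in for the convexity of $g\mapsto G(S_{\de,s}g)$ that the nonlinearity of $S_{\de,s}$ destroys.

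Your argument is correct for $p=2$: there $S_{\de,s}$ is linear, so its graph is convex and $j_{\de,s}$ is strictly convex. For $p\neq 2$ the reduced functional is in general nonconvex. Uniqueness then needs a genuinely different argument or extra hypotheses, and neither your proposal nor the paper's proof provides one.

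\paragraph{Two repairable points in the existence half.} First, you are right that \ref{F3}, with exponent $p^*_s>p$, does not make $G$ finite on $L^p$; the paper passes over this. However, your Fatou argument does not give the liminf. Applied to $F+c_1|v|^{p^*_s}+\ell\geq 0$, it yields only
$G(\bar v)\leq\liminf_k G(v_k)+c_1(\limsup_k\|v_k\|^{p^*_s}_{L^{p^*_s}}-\|\bar v\|^{p^*_s}_{L^{p^*_s}})$.
A uniform $L^{p^*_s}$ bound does not remove the last term, because at the critical exponent the norms need not converge. Working in $L^q$ with $q<p^*_s$ does not help either, since the growth of $F$ still exceeds $q$.

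Use \ref{F2} instead. By \ref{F1}--\ref{F3}, $G$ is finite, convex and (via the Nemytskii operator) continuous on $L^{p^*_s}$, hence weakly lower semicontinuous there. The sequence $v_k=S_{\de,s}g_k$ is bounded in $L^{p^*_s}$ and convergent in $L^p$, so it converges weakly in $L^{p^*_s}$, which gives the liminf.

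Second, for nonlinear $S_{\de,s}$, ``bounded plus compact embedding'' (Lemma \ref{compactnessNLStateOperator}) only gives precompactness of $\{S_{\de,s}g_k\}$. To conclude $S_{\de,s}g_k\to S_{\de,s}\bar g$ from $g_k\rightharpoonup\bar g$, you must identify the limit. One way is to pass to the limit in the minimality of $S_{\de,s}g_k$, as in Lemma \ref{closureOfMinimization}, and then invoke uniqueness of the state. With these two fixes, existence stands.
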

\begin{proof}
Let the reduced cost functional $j_{\de, s}: L^{p'}(\Om; \R^n) \rightarrow \R$ be defined as 
\begin{equation*}\label{NLReducedCostFunctional}
	j_{\de, s}(g) \ := \ \mathcal{F}(S_{\de,s}(g),g) \ = \ \int_{\Om}F(x, S_{\de, s}(g)(x))dx + \int_{\Om}\La(x)|g(x)|^{p'}dx.
\end{equation*}
	Then, since $S_{\de,s}$ is compact by Lemma \ref{compactnessNLStateOperator}, the first part of the result follows by direct application of Theorem \ref{Th: abstractMinimizationConvex}. 

    As for the uniqueness part, suppose there were two admissible pairs, $(u_1,g_1),\, (u_2,g_2) \in \mathcal{A}^{\de}_s$, that minimize the cost functional \eqref{costFunc}. Then, $(\tilde{u},\tilde{g}):=\frac{1}{2}(u_1,g_1) +\frac{1}{2}(u_2,g_2) \in \mathcal{A}^\de_s$ by the convexity of $\mathcal{A}^{\de}_s$ (Lemma \ref{le: admissible set convex}). Recall that by the strict convexity of the energy $W^{\de,s}_g(\cdot)$, if $g_1=g_2$, then $u_1=u_2$ (there can not be two minimizers of the energy when $g$ is fixed), so we can assume $g_1 \neq g_2$. Nevertheless, by the convexity of $F$ and strict convexity of the weighted $p'$-norm of $g$ in \eqref{costFunc} we would have
    \begin{equation*}
        \mathcal{F}(\tilde{u},\tilde{g}) < \frac{1}{2}\mathcal{F}(u_1,g_1) +\frac{1}{2}\mathcal{F}(u_2,g_2)
    \end{equation*}
    which contradicts $(u_1,g_1)$ and $(u_2,g_2)$ minimizing the cost functional $\mathcal{F}(u,g)$.
    
\end{proof}
 The classical, local analogue of Theorem \ref{WPNLControlThm} is included in Section \ref{Sec: asym}.
\begin{remark}
    All of these results hold for a general (strictly) convex energy density, except for the forthcoming strong convergence of the nonlocal gradients of minimizers to the local one (which is part of Theorems \ref{convMinss->1} and \ref{De->0: convMinss})
\end{remark}



\subsection{Optimal control with non-convex energy}\label{Sec: nonconvex}

Now, we address in this section the optimal control problem where we relax the convexity condition on the energy to being just quasiconvex or policonvex. We do not write an Euler-Lagrange system associated with this constraint, because we gain no benefit by assuming that the energy is (Fréchet, or even Gâteaux) differentiable. Thus, we say that a pair $(u,g)$ verifies the nonlocal state constraint if 
\begin{equation} \label{eq: NL state constrain}
    W^{\de, s}_g(u) \leq W^{\de, s}_g(v) \qquad \forall v\in H^{s,p,\de}_0(\Om_{-\de},\Rn).
\end{equation}
Throughout this section we will assume that $W^{\de,s}_g(\cdot)$ is not identically infinity in $H^{s,p,\de}_0(\Om_{-\de}; \R^n)$ and whose energy density $W:\Om \times \Rn \times \R^{n \times n} \to \R$ verifies one of the following.

		\begin{enumerate}[label=\textbf{H\arabic*}]
			\item\label{H1} There exist $c_1,c_2>0$, $a\in L^1(\Om)$ and Borel function $h:[0,\infty) \to [0,\infty)$ such that $\lim_{t \to \infty} \frac{h(t)}{t}=\infty$, and  $W(x,y,A)\geq a(x) + c_1|F|^p+c_2|\text{cof}(A)|^q + h(|det(A)|)$.
			\item\label{H2} There exist $c, c_0, C_2 > 0$ such that $c|A|^p-c_0 \leq W(x, y, A) \leq C_2(1+|z|^p+|A|^p)$. Moreover, $W$ is assumed to be a Carathéodory integrand.
		\end{enumerate}

	\begin{problem}[Non-convex nonlocal problem]\label{nlProbStatementG}
    Our nonlocal non-convex problem is to find
		\begin{equation*}\label{stateMinNLG}
			\mathcal{F}(\overline{u}, \overline{g}) = \min_{(u, \ g) \in \mathcal{K}^{\de}_s}\mathcal{F}(u, \ g),
		\end{equation*} 
		where the admissible class of pairs $\mathcal{K}^{\de}_s$ is given by
		\begin{equation*}\label{nonlocalAdmiClassG}
			\mathcal{K}^{\de}_s \ := \ \{(u, g) \in H^{s, p, \de}_0(\Om_{-\de}; \R^n) \times Z_{\text{ad}} \ | \ (u,g) \, \text{ verifies}  \, \eqref{eq: NL state constrain}\}.
		\end{equation*}
	\end{problem}

Given the lack of a well-defined solution mapping, unlike the problems studied in Section \ref{Sec: pLapEner}, we can no longer rely on the techniques employed in those cases, including the uniqueness arguments. To prove that Problem \ref{nlProbStatementG} has a solution, we will proceed by the direct method. However, due to the non-uniqueness of minimizers for the energy $\mathcal{W}^{\de, s}_0(\cdot)$, we need an additional tool compared to the convex case. Namely, we need to prove that the collection of admissible states is bounded. In particular, the next two lemmas allow us to verify that a minimizing sequence $\{(u_k,g_k)\}^{\infty}_{k = 1}$ of the cost functional will have a converging sub-sequence in $\mathcal{K}^{\de}_s$. 
\begin{lemma}[Boundedness of admissible states]\label{bddAdmissibleState}
Suppose \eqref{eq: convex lower bound} is satisfied. Then, the image of $\mathcal{K}^{\de}_s$ into $H^{s, p, \de}_0(\Om_{-\de}; \R^n)$, i.e., the collection of states which minimize the energy density for some admissible control given by
	\begin{equation*}\label{eq:UNLAdmis}
		U_{\de, s} \ := \ \{u \in H^{s, p, \de}_0(\Om_{-\de}; \R^n), \ | \ \ex g \in Z_{\text{ad}}, \ (u, g) \in \mathcal{K}^{\de}_s\},
	\end{equation*}
	is bounded. 
\end{lemma}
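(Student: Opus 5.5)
The plan is to compare the energy of an admissible state with the energy of the zero function, and then turn this comparison into a bound on the nonlocal gradient via the lower bound \eqref{eq: convex lower bound} and the nonlocal Poincar\'e inequality (Proposition \ref{Prop: NLPoincare}). Fix $u \in U_{\de,s}$ and choose $g \in Z_{\text{ad}}$ with $(u,g) \in \mathcal{K}^{\de}_s$.

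\textbf{Step 1: comparison with $v=0$.} Since $u$ minimizes $\mathcal{W}^{\de,s}_g$ over $H^{s,p,\de}_0(\Om_{-\de};\R^n)$ and $0$ belongs to this space, testing \eqref{eq: NL state constrain} with $v=0$ gives
\begin{equation*}
\int_{\Om} W(x,u,D^s_\de u)\,dx - \lang g,u\rang \ \leq \ \int_\Om W(x,0,0)\,dx .
\end{equation*}
The right-hand side does not depend on $g$. I need it to be finite. Under \ref{H2} this is immediate from the upper growth bound, which gives $W(x,0,0)\le C_2$. In the polyconvex case \ref{H1} I would invoke the standing assumption that the energy is not identically $+\infty$, and if necessary assume that $\int_\Om W(x,0,0)\,dx<\infty$, which is the natural normalization there. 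If that fails, I would test instead with a fixed $v_0$ of finite energy. In that case the term $\lang g,v_0\rang$ still appears, but it is bounded uniformly in $g$ because $Z_{\text{ad}}$ is bounded in $L^{p'}$. I expect this to be the main obstacle: obtaining a competitor whose energy is bounded \emph{uniformly} over all admissible controls. Boundedness of $Z_{\text{ad}}$ is exactly what makes this possible.

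\textbf{Step 2: coercivity.} On the left-hand side I use \eqref{eq: convex lower bound}, namely $W \ge a(x) + c|A|^p$ (this bound also follows from \ref{H1} or \ref{H2}). Combining it with H\"older's inequality and Proposition \ref{Prop: NLPoincare} gives
\begin{equation*}
c\|D^s_\de u\|^p_{L^p} \ \le \ \|a\|_{L^1} + K + \|g\|_{L^{p'}}\|u\|_{L^p} \ \le \ C_0 + C M \|D^s_\de u\|_{L^p},
\end{equation*}
where $K$ is the constant from Step 1 and $M:=\sup_{g\in Z_{\text{ad}}}\|g\|_{L^{p'}}<\infty$. Since $p>1$, this inequality forces $\|D^s_\de u\|_{L^p}\le R$ for some $R$ depending only on $c, C_0, C, M, p$. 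Alternatively, one can apply the $\ep$-Young inequality exactly as in the proof of Proposition \ref{minimizationOfNLEnergy} and absorb the $|u|^p$ term into the left-hand side.

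\textbf{Step 3: conclusion.} A second application of Proposition \ref{Prop: NLPoincare} bounds $\|u\|_{L^p}$, and hence the full $H^{s,p,\de}$ norm, by $(C+1)R$. This bound is independent of $u$ and $g$, so $U_{\de,s}$ is bounded.
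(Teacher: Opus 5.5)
Your proof is correct and follows the paper's argument. You test the minimality of $u$ against $v=0$, apply the lower bound \eqref{eq: convex lower bound}, and then use H\"older, the nonlocal Poincar\'e inequality and the boundedness of $Z_{\text{ad}}$ to get $c\|D^s_\de u\|^p_{L^p}\le C\|D^s_\de u\|_{L^p}+C'$, which bounds $\|D^s_\de u\|_{L^p}$ since $p>1$; your check that $\int_\Om W(x,0,0)\,dx$ is finite (immediate under \ref{H2}, or handled by a fixed finite-energy competitor $v_0$ with $|\lang g,v_0\rang|$ controlled uniformly over $Z_{\text{ad}}$) makes explicit a point the paper leaves implicit when it writes $|\mathcal{W}^{\de,s}_0(0)|$.
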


\begin{proof}
    Let $g \in Z_{\text{ad}}$ be arbitrary and $u$ a minimizer of $\mathcal{W}^{\de, s}_{g}$ in $H^{s, p, \de}_0(\Om_{-\de}; \R^n)$. Then, using the zero function as a test function we have
	\begin{equation}\label{eq: bddAdmissibleStateEq2}
		\mathcal{W}^{ \de,s}_{0}(u)  \ \leq \ W_0^{\de,s}(0). 
	\end{equation}
   If we combine now \eqref{eq: bddAdmissibleStateEq2} with the lower bound \eqref{eq: convex lower bound} on $W$  we get
    \begin{equation*}
     c\|D^s_\de u\| \leq \langle g_{\de, s}, u_{\de, s}\rangle-\int_{\Om} a dx - W_0^{\de,s}(0)
    \end{equation*}
    for some $c>0$ and $a\in L^1(\Om)$. Then, by the lower bound \eqref{eq: convex lower bound} on $W$, Hölder's inequality, the Nonlocal Poincaré Inequality (Proposition \ref{Prop: NLPoincare}), and the boundedness of $Z_{\text{ad}}$, we have
      \begin{eqnarray*}
 \begin{aligned}
            c\|D^s_{\de}u\|^p_{L^p(\Om; \R^{n \times n})} \ &\leq \ \|g_{\de, s}\|_{L^{p'}(\Om; \R^n)}\|u\|_{L^p(\Om; \R^n)} - \int_{\Om} a dx  - W_0^{\de,s}(0)\\ 
            &\leq  \ C\|D^s_{\de}u\|_{L^p(\Om; \R^{n \times n})} + \|a\|_{L^1(\Om)} + \left| W_0^{\de,s}(0)\right|,
     \end{aligned}
     \end{eqnarray*}
       which completes the proof.
\end{proof}
 The next lemma assures that minimization of energies is preserved under the topology on which a minimizing sequence for the cost functional converges, which means the limit of the minimizing sequence will be admissible. A similar result is presented in \cite[Lemma 3.3]{siktar2024superlinear}.
%
\begin{lemma}\label{closureOfMinimization}
    Let $W$ be a weakly lower semi-continuous energy density verifying \eqref{eq: convex lower bound}. Then the set $\mathcal{K}^{\de}_s$ is weakly closed on $H^{s, p, \de}(\Om; \R^n) \times L^{p'}(\Om; \R^n)$ .
\end{lemma}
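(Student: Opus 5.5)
We argue sequentially: take $(u_k,g_k)\in\mathcal{K}^{\de}_s$ with $u_k\rightharpoonup u$ weakly in $H^{s,p,\de}(\Om;\R^n)$ and $g_k\rightharpoonup g$ weakly in $L^{p'}(\Om;\R^n)$, and show $(u,g)\in\mathcal{K}^{\de}_s$.

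First, we check that the limit pair lies in the right product set. Since $Z_{\text{ad}}$ is convex and closed, Mazur's lemma makes it weakly closed, so $g\in Z_{\text{ad}}$. By Proposition \ref{Prop: CompactEmbedding}, applied with zero datum, $u\in H^{s,p,\de}_0(\Om_{-\de};\R^n)$ and $u_k\to u$ strongly in $L^p(\Om;\R^n)$. It remains to verify the minimality condition \eqref{eq: NL state constrain}.

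Fix an arbitrary $v\in H^{s,p,\de}_0(\Om_{-\de};\R^n)$. For every $k$, minimality of $u_k$ gives
\begin{equation*}
\int_\Om W(x,u_k,D^s_\de u_k)\,dx-\lang g_k,u_k\rang \ \le \ \int_\Om W(x,v,D^s_\de v)\,dx-\lang g_k,v\rang .
\end{equation*}
We pass to the limit term by term.
\begin{enumerate}
\item The pairing $\lang g_k,u_k\rang$ combines a weakly convergent sequence with a strongly convergent one, so it converges to $\lang g,u\rang$.
\item The pairing $\lang g_k,v\rang\to\lang g,v\rang$ by weak convergence alone.
\item The term $\int_\Om W(x,v,D^s_\de v)\,dx$ does not depend on $k$.
\end{enumerate}
For the remaining energy term, we use the hypothesis that the functional $u\mapsto\int_\Om W(x,u,D^s_\de u)$ is weakly lower semicontinuous on $H^{s,p,\de}$. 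In the convex case this is standard. In the poly/quasiconvex cases \ref{H1}/\ref{H2} it is the result of \cite{bellido2023minimizers, cueto2023variational}. The lower bound \eqref{eq: convex lower bound} ensures the energy is bounded below, so the $\liminf$ is meaningful. This yields
\begin{equation*}
\int_\Om W(x,u,D^s_\de u)\le\liminf_k\int_\Om W(x,u_k,D^s_\de u_k).
\end{equation*}

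Combining these limits, we obtain $\mathcal{W}^{\de,s}_g(u)\le\mathcal{W}^{\de,s}_g(v)$. Since $v$ was arbitrary, $(u,g)\in\mathcal{K}^{\de}_s$.

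The main obstacle is the coupling term $\lang g_k,u_k\rang$, a product of two merely weakly convergent sequences. It is handled precisely by the compact embedding, which upgrades the convergence of $u_k$ to strong convergence in $L^p$. A secondary point is interpreting the hypothesis "weakly lower semicontinuous energy density" as lower semicontinuity of the integral functional along weak convergence in $H^{s,p,\de}$. If the right side is $+\infty$ for some $v$, the inequality for that $v$ is trivial.
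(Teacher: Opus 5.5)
Your proposal is correct and follows essentially the same route as the paper: you use the compact embedding (Proposition \ref{Prop: CompactEmbedding}) to upgrade $u_k\rightharpoonup u$ to strong $L^p$ convergence, and then pass to the limit in the minimality inequality using the lower bound \eqref{eq: convex lower bound} and the weak lower semicontinuity of the energy. You are more explicit than the paper in checking $g\in Z_{\text{ad}}$ via Mazur and in spelling out the strong--weak convergence of the coupling term $\lang g_k,u_k\rang$, both of which the paper leaves implicit.
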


\begin{proof}
Given $\{(u_k, g_k)\}^{\infty}_{k = 1} \subset \mathcal{K}^{\de}_s$ such that $g_k \rightharpoonup \overline{g}$ weakly in $L^{p'}(\Om; \R^n)$, and $u_k \rightharpoonup \overline{u}$ weakly in $H^{s, p, \de}(\Om; \R^n)$, we want to prove that $(\overline{u}, \overline{g}) \in \mathcal{K}^{\de}_s$. On one hand, since $u_k \in H^{s,p,\de}_0(\Om_{-\de}; \R^n)$, by Proposition \ref{Prop: CompactEmbedding}, we must have that $u_k \rightarrow \overline{u}$ strongly in $L^p(\Om; \R^n)$.
On the other hand, by the definition of $ \mathcal{K}^{\de}_s$, we have that for each $k$,
	\begin{equation}\label{Eq: closureOfMinimizationEq1}
		\mathcal{W}^{\de, s}_{g_k}(u_k) \ \leq \ \mathcal{W}^{\de, s}_{g_k}(v), \quad \forall \, v \in H^{s, p, \de}_0(\Om_{-\de}; \R^n).
	\end{equation}
    Then, by \eqref{eq: convex lower bound} and the weak lower semi-continuity of $W$ we pass to the limit as $k \rightarrow \infty$ in \eqref{Eq: closureOfMinimizationEq1} to obtain
	\begin{equation*}\label{Eq: closureOfMinimizationEq2}
		\mathcal{W}^{\de, s}_{\bar{g}}(\bar{u}) \ \leq \ \mathcal{W}^{\de, s}_{\bar{g}}(v), \quad  \forall \, v \in H^{s, p, \de}_0(\Om_{-\de}; \R^n).
	\end{equation*}
	Consequently, $(\overline{u}, \overline{g}) \in \mathcal{K}^{\de}_s$, completing the proof.
\end{proof}


Recall that weak lower semi-continuity is guaranteed in this nonlocal context assuming quasiconvexity, \cite[Theorem 5]{cueto2023variational}, and polyconvexity, \cite[Theorem 6.1]{bellido2023minimizers}, under the corresponding growth conditions on the energy density.
Now we state and prove the existence result for the non-convex control problem.

\begin{theorem}[Existence of solutions to non-convex control problem]\label{existenceMinsNLGeneral}
    Assume that the energy density $W$ verifies one of the following:
    \begin{itemize}
        \item[a)] $W$ is polyconvex and \ref{H1} is satisfied with $p\geq n-1$ and $q\geq \frac{n}{n-1}$.
        \item[b)] $W$ is quasiconvex and \ref{H2} is satisfied.
    \end{itemize}
Then, Problem \ref{nlProbStatementG} has a minimizer $(\overline{u}, \overline{g}) \in \mathcal{K}^{\de}_s$.
\end{theorem}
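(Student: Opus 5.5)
The proof will use the direct method on $\mathcal{F}$ restricted to $\mathcal{K}^{\de}_s$, with Lemmas \ref{bddAdmissibleState} and \ref{closureOfMinimization} supplying compactness and closedness.

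\emph{Nonemptiness and a finite infimum.} For any fixed $g\in Z_{\text{ad}}$, the existence results \cite[Theorem 6.1]{bellido2023minimizers} (case a) and \cite[Corollary 2]{cueto2023variational} (case b) give a minimizer $u$ of $\mathcal{W}^{\de,s}_g$ in $H^{s,p,\de}_0(\Om_{-\de};\R^n)$. In each case the linear term $-\lang g,u\rang$ is absorbed exactly as in the proof of Proposition \ref{minimizationOfNLEnergy}, via Young's inequality and Proposition \ref{Prop: NLPoincare}. Hence $\mathcal{K}^{\de}_s\neq\emptyset$. Moreover, both \ref{H1} and \ref{H2} imply the lower bound \eqref{eq: convex lower bound}, with $a=-c_0$ in case b. So Lemma \ref{bddAdmissibleState} applies and $U_{\de,s}$ is bounded in $H^{s,p,\de}$.

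By Proposition \ref{Prop: CompactEmbedding}, states in $U_{\de,s}$ are then bounded in $L^{q}$ for every $q<p^*_s$. I expect the control of $\int_\Om F(x,u)$ under the critical growth \ref{F3} to be the main obstacle. The first thing to try is the continuous fractional Sobolev embedding $H^{s,p}_0\hookrightarrow L^{p^*_s}$, available because $H^{s,p,\de}_0=H^{s,p}_0$. This gives $\int_\Om |F(x,u)|\le c_1\|u\|^{p^*_s}_{L^{p^*_s}}+\|\ell\|_{L^1}$, which is bounded on $U_{\de,s}$. Together with $\La\ge\la>0$ and the boundedness of $Z_{\text{ad}}$, this shows $\inf_{\mathcal{K}^{\de}_s}\mathcal{F}$ is finite.

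\emph{Extracting a limit.} Take a minimizing sequence $(u_k,g_k)\in\mathcal{K}^{\de}_s$. Since $(u_k)$ is bounded in the reflexive space $H^{s,p,\de}$ and $Z_{\text{ad}}$ is bounded, closed and convex in the reflexive space $L^{p'}$, we can pass to a subsequence with $u_k\rightharpoonup\bar u$ in $H^{s,p,\de}$ and $g_k\rightharpoonup\bar g\in Z_{\text{ad}}$. The constraint survives the limit by Lemma \ref{closureOfMinimization}: its weak lower semicontinuity hypothesis holds by \cite[Theorem 6.1]{bellido2023minimizers} in case a (where $p\ge n-1$ and $q\ge \frac{n}{n-1}$ are exactly the exponents needed for the weak continuity of minors there) and by \cite[Theorem 5]{cueto2023variational} in case b. Thus $(\bar u,\bar g)\in\mathcal{K}^{\de}_s$.

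In checking Lemma \ref{closureOfMinimization} I would verify one detail explicitly. Passing to the limit in $\mathcal{W}_{g_k}(u_k)\le\mathcal{W}_{g_k}(v)$ requires $\lang g_k,u_k\rang\to\lang\bar g,\bar u\rang$. This holds as a weak–strong pairing, using the strong $L^p$ convergence of $u_k$ from Proposition \ref{Prop: CompactEmbedding}. The right-hand side $\lang g_k,v\rang$ converges for each fixed $v$ by weak convergence.

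\emph{Lower semicontinuity of the cost.} The map $g\mapsto\int_\Om\La|g|^{p'}$ is convex and strongly continuous, hence weakly lower semicontinuous. The map $u\mapsto\int_\Om F(x,u)$ is convex by \ref{F2} and, under \ref{F3}, continuous on $L^{p^*_s}$, hence weakly lower semicontinuous there. Since $u_k\rightharpoonup\bar u$ in $L^{p^*_s}$ by the continuous embedding, we get $\mathcal{F}(\bar u,\bar g)\le\liminf_k\mathcal{F}(u_k,g_k)=\inf_{\mathcal{K}^{\de}_s}\mathcal{F}$. (If only strong $L^q$ convergence for $q<p^*_s$ is used instead, Fatou's lemma alone does not suffice for the negative part of $F$; the convexity-based argument avoids this.) Therefore $(\bar u,\bar g)$ is a minimizer.
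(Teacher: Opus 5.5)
Your proposal is correct and follows the same direct-method route as the paper. You bound $\mathcal{F}$ from below on $\mathcal{K}^{\de}_s$ using \ref{F3}, the embedding into $L^{p^*_s}$ and Lemma \ref{bddAdmissibleState}, then extract weakly convergent subsequences, obtain admissibility of the limit from Lemma \ref{closureOfMinimization}, and conclude by lower semicontinuity of $\mathcal{F}$. Your extra checks make explicit steps the paper passes over quickly: nonemptiness of $\mathcal{K}^{\de}_s$, and lower semicontinuity of $\int_\Om F(x,u)\,dx$ derived from convexity plus weak convergence in $L^{p^*_s}$ rather than from strong $L^p$ convergence alone.
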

\begin{proof}
	We must first show that the cost functional $\mathcal{F}$ defined in \eqref{costFunc} is bounded from below. Since the second term of $\mathcal{F}$ is non-negative, it suffices to show that the first term is bounded from below. Due to the lower bound of inequality \eqref{Gqcoercive}, for any $(u, g) \in \mathcal{K}^{\de}_s$, we have that
	\begin{equation*}\label{eq: existenceMinsNLGeneralEq1}
		\mathcal{F}(u, g) \ \geq \ \int_{\Om}F(x, u(x))dx \ \geq \ -c_1\|u\|^{p^*_s}_{L^{p^*_s}(\Om; \R^n)} - \|\ell\|_{L^1(\Om)}.
	\end{equation*}
	Due to the continuous embedding $H^{s, p, \de}_{g}(\Om_{-\de}; \R^n) \subset L^{p^*_s}(\Om; \R^n)$ of \cite[Theorem 2.8]{bellido2024minimizers}, we actually have that
	\begin{equation*}\label{eq: existenceMinsNLGeneralEq2}
		\mathcal{F}(u, g) \ \geq \ -\|D^s_{\de}u\|^{p^*_s}_{L^p(\Om; \R^{n\times n})} - \|\ell\|_{L^1(\Om)}.
	\end{equation*}
	From this inequality, owing to Lemma \ref{bddAdmissibleState}, we conclude that $\mathcal{F}$ is bounded from below on $\mathcal{K}^{\de}_s$.
	
	Now we prove that the infimum is attained by a converging subsequence. Let $m := \inf_{(u, g) \in \mathcal{K}^{\de}_s}\mathcal{F}(u, g) > -\infty$. Let $\{(u_{ k}, g_{k})\}^{\infty}_{k = 1} \subset \mathcal{K}^{\de}_s$ denote a minimizing sequence for $\mathcal{F}$ such that $\lim_{k \rightarrow \infty}\mathcal{F}(u_{ k}, g_{ k}) = m$. Since $\{g_{k}\}^{\infty}_{k = 1} \subset Z_{ad}$, there exists a $\overline{g} \in Z_{ad}$ such that, up to a not relabeled sub-sequence, $g_{ k} \rightharpoonup \overline{g}$ weakly in $L^{p'}(\Om; \R^n)$. Meanwhile, due to Lemma \ref{bddAdmissibleState} the sequence $\{u_{ k}\}^{\infty}_{k = 1}$ is a bounded subset of $H^{s, p, \de}_0(\Om_{-\de}; \R^n)$. As this space is reflexive, there exists a $\overline{u} \in H^{s, p, \de}_0(\Om_{-\de}; \R^n)$ such that, up to a further sub-sequence, $u_{ k} \rightharpoonup \overline{u}$ weakly in $H^{s, p, \de}(\Om; \R^n)$. Due to Lemma \ref{closureOfMinimization}, these convergences are sufficient to assure that $(\overline{u}, \overline{g}) \in \mathcal{K}^{\de}_s$.
	
	Finally, we must prove that $\mathcal{F}(\overline{u}, \overline{g}) = m$. To accomplish this, note by using the compact embedding Proposition \ref{Prop: CompactEmbedding} again, we have that $u_{ k} \rightarrow \overline{u}$ strongly in $L^p(\Om; \R^n)$. Using this assertion, the weak convergence $g_{ k} \rightharpoonup \overline{g}$ in $L^{p'}(\Om; \R^n)$, and the assumptions on $\mathcal{F}$, we have that
	\begin{equation*}\label{eq: existenceMinsNLGeneralEq3}
		\mathcal{F}(\overline{u}, \overline{g}) \ \leq \ \liminf_{k \rightarrow \infty}\mathcal{F}(u_{ k}, g_{k}) \ = \ m,
	\end{equation*}
	completing the proof.
\end{proof}


\section{Asymptotic analysis}\label{Sec: asym}

One of the major questions typically considered when studying nonlocal problems is whether or not these types of problems approach their local counterparts when the nonlocality vanishes. The proper concept of convergence when dealing with variational functionals is $\Gamma$-convergence which, among other things, also provides the convergence of minimizers to minimizers.
As we will see, one of the major questions investigated in this paper is how the family of nonlocal problems relates to a corresponding local problem, as  $s \rightarrow 1^-$ or, alternatively, as $\de \rightarrow 0^+$. Before that, we carefully state the results for local problems as well, obtained analogously to their nonlocal counterparts. We also recall that our cost functional satisfies Assumption \ref{Assump: cost}; namely, the integrand $F: \R^n \times \R^n \rightarrow \R$ satisfies \ref{F1}, \ref{F2}, and \ref{F3}.

Notice, however, that given the different nature of the convergence performed in each subsection, the solutions of the corresponding local problems are defined in different domains. In particular, recalling that, for the nonlocal problem, functions are assumed to be zero in the double collar $\Om_{\de}\backslash \Om_{-\de}$, if $\delta$ is fixed, the limit functions when taking $s$ to $1$ will also vanish outside $\Om_{-\de}$. On the other hand, when we take $\de$ to $0$, the boundary where solutions vanish moves to coincide with that of $\Om$. Therefore, we define the local state equation in a general domain $\tilde{\Om}$ to be specified in each subsection. 


\subsubsection{Local convex problem}
In Subsection \ref{Sec: pLapEner}, we studied optimal control problems whose state equations were determined by a nonlocal $p$-Laplacian. Such an operator was obtained from minimizing the nonlocal $p$-Dirichlet energy, and therefore, it is expected to converge to the classical local $p$-Laplacian during the localization process. Thus, we write that local problem here. In particular, the local state equation can be written in strong form as	\begin{equation*}\label{stateEqnStrongFormL}
		\begin{cases}
			-\dive(\mathbb{A}(x)|\grad u(x)|^{p - 2}\grad u(x)) \ = \ g, \ x \in \tilde{\Om} \\
			u \ = \ 0, \ x \in \pa \tilde{\Om}.
		\end{cases}
	\end{equation*}
    Alternatively, we say that a pair $(u, g) \in W^{1,p}_0(\tilde{\Om}; \R^n) \times Z_{\text{ad}}$ verifies the  local state system in weak form if
	\begin{equation}\label{stateEqnWkFormL}
		\begin{cases}
			\mathcal{Y}_0(u, v) \ = \ \lang g, v\rang \quad \ \fa v \in W^{1, p}_0(\tilde{\Om}; \R^n)\\
			u \ = \ 0, \  x \in \pa \tilde{\Om}
		\end{cases},
	\end{equation}
where the local mapping $\mathcal{Y}_0: W^{1, p}(\tilde{\Om}; \R^n) \times W^{1, p}(\tilde{\Om}; \R^n) \rightarrow \R$ is defined as
	\begin{equation*}\label{Eq: Y0}
		\mathcal{Y}_0(u, v) \ := \ \int_{\tilde{\Om}}\mathbb{A}(x)|\grad u(x)|^{p - 2}\grad u(x):\grad v(x)dx.
	\end{equation*}

	\begin{problem}[Convex local problem]\label{lProbStatement}
		The convex local problem is to solve
		\begin{equation*}\label{stateMinL}
			\mathcal{F}(\overline{u}, \overline{g}) = \min_{(u, g) \in \Alg^{\loc}}\mathcal{F}(u, g),
		\end{equation*} 
        where the admissible class of pairs is given by
        \begin{eqnarray*}\label{localAdmiClass}
			\begin{aligned}
				\Alg^{\loc} \ := \ &\{(u, g) \in W^{1, p}_0(\tilde{\Om}; \R^n) \times Z_{\text{ad}} \ | \, (u,g) \text{ verifies } \eqref{stateEqnWkFormL}\}.
			\end{aligned}
		\end{eqnarray*}
	\end{problem}

    The analogous statement for the local existence result is presented here. Notice that in the same spirit as Lemma \ref{compactnessNLStateOperator}, the underlying local solution operator associated with the state equation \eqref{stateEqnWkFormL}, $S_{\text{loc}}:L^{p'}(\tilde{\Om};\Rn) \to W^{1,p}_0(\tilde{\Om};\Rn) \hookrightarrow L^p(\tilde{\Om}; \R^n) $ is compact. Furthermore, $\mathcal{A}^{\loc}$ is convex as well. Therefore, by Theorem \ref{Th: abstractMinimizationConvex} we have the following local result.
\begin{theorem}[Well-posedness of local control problem]\label{WPLControlThm}
	There exists a pair $(\overline{u}, \overline{g}) \in W^{1, p}_0(\tilde{\Om}; \R^n) \times Z_{\text{ad}}$ solving Problem \ref{lProbStatement}. 
    Furthermore, this solution is unique.  
\end{theorem}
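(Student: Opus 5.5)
The plan is to mirror the nonlocal argument (Lemma \ref{compactnessNLStateOperator}, Lemma \ref{le: admissible set convex} and Theorem \ref{WPNLControlThm}) with the classical gradient $\grad$ in place of $D^s_\de$ and $W^{1,p}_0(\tilde{\Om};\R^n)$ in place of $H^{s,p,\de}_0(\Om_{-\de};\R^n)$. There are three stages: well-definedness and compactness of the solution operator, existence via Theorem \ref{Th: abstractMinimizationConvex}, and uniqueness via convexity of $\Alg^{\loc}$.

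\textbf{Solution operator.} For each $g\in L^{p'}(\tilde{\Om};\R^n)$, consider the local energy
\[
\mathcal{W}^{\loc}_g(v)=\frac1p\int_{\tilde{\Om}}\mathbb{A}|\grad v|^{p-2}\grad v:\grad v\,dx-\lang g,v\rang .
\]
Uniform ellipticity \eqref{Eq: UnifEllip} makes the density bounded below by a multiple of $|\grad v|^p$. Combining this with the classical Poincar\'e inequality on $W^{1,p}_0(\tilde{\Om};\R^n)$ and the $\ep$-Young argument of Proposition \ref{minimizationOfNLEnergy}, the energy is coercive. It is also strictly convex and weakly lower semicontinuous, so the direct method gives a unique minimizer. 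Since the density is $C^1$ in $A$, this minimizer is exactly the weak solution of \eqref{stateEqnWkFormL}, and we call it $S_{\loc}(g)$.

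To bound $S_{\loc}$, test \eqref{stateEqnWkFormL} with $v=u=S_{\loc}(g)$, then apply H\"older and Poincar\'e. This yields $\|\grad u\|_{L^p}^{p}\preceq \|g\|_{L^{p'}}\|\grad u\|_{L^p}$, hence $\|u\|_{W^{1,p}}\preceq\|g\|_{L^{p'}}^{1/(p-1)}$. So $S_{\loc}$ maps bounded sets to bounded sets.

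\textbf{Existence.} The main obstacle is that $S_{\loc}$ is nonlinear, so "compact" must mean that weakly convergent sequences $g_k\rightharpoonup g$ have images converging strongly in $L^p$ to $S_{\loc}(g)$; mere boundedness plus Rellich is not enough, since the limit must be identified. I will prove this as follows.
\begin{enumerate}
\item By boundedness and reflexivity, a subsequence satisfies $u_k\rightharpoonup u$ in $W^{1,p}_0$, and by Rellich--Kondrachov $u_k\to u$ strongly in $L^p$.
\item Each $u_k$ minimizes $\mathcal{W}^{\loc}_{g_k}$. The term $\lang g_k,u_k\rang$ converges to $\lang g,u\rang$ because it pairs a weakly convergent sequence with a strongly convergent one.
\item For fixed $v$, we have $\lang g_k,v\rang\to\lang g,v\rang$. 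Weak lower semicontinuity of the convex gradient term then gives $\mathcal{W}^{\loc}_g(u)\le\mathcal{W}^{\loc}_g(v)$ for all $v$; this is the local version of Lemma \ref{closureOfMinimization}.
\item By uniqueness of the minimizer, $u=S_{\loc}(g)$, so the whole sequence converges.
\end{enumerate}
This is precisely the property used in the proof of Theorem \ref{Th: abstractMinimizationConvex}.

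Next, take $G(u)=\int_{\tilde{\Om}}F(x,u)\,dx$. By \ref{F2} and \ref{F3} with the Sobolev exponent, $G$ is finite and continuous on the relevant space. Theorem \ref{Th: abstractMinimizationConvex} then produces a minimizer $\overline g$ of the reduced functional, and $(\overline u,\overline g)=(S_{\loc}(\overline g),\overline g)$ solves Problem \ref{lProbStatement}.

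\textbf{Uniqueness.} Repeating the proof of Lemma \ref{le: admissible set convex} verbatim shows that $\Alg^{\loc}$ is convex, since the energy is convex in $u$ and affine in $g$. Suppose there were two distinct optimal pairs. If $g_1=g_2$, strict convexity of the energy forces $u_1=u_2$. Otherwise, the midpoint pair lies in $\Alg^{\loc}$, and its cost is strictly smaller than the average of the two costs. This follows from convexity of $F$ together with strict convexity of $g\mapsto\int\La|g|^{p'}$, using $\La\ge\la>0$ and $p'>1$. That contradicts optimality, so the solution is unique.
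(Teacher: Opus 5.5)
The existence half of your proposal is sound and follows the paper's route: you build a solution operator and feed it into Theorem \ref{Th: abstractMinimizationConvex}. It is in fact more careful than the paper. For the nonlinear map $S_{\loc}$, the proof of Theorem \ref{Th: abstractMinimizationConvex} needs $S_{\loc}(g_k)\to S_{\loc}(g)$ whenever $g_k\rightharpoonup g$. Boundedness plus Rellich, which is all the analogue of Lemma \ref{compactnessNLStateOperator} provides, does not identify the limit; your closure-plus-uniqueness step does. One small fix: $G$ need not be finite on all of $L^p$, so you should obtain that convergence in $L^{p^*_s}$. The compact embedding $W^{1,p}_0\hookrightarrow L^{p^*_s}$ gives this, because $p^*_s<p^*$.

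The genuine gap is uniqueness. Convexity in $u$ plus affinity in $g$ does not make $(u,g)\mapsto\mathcal{W}^{\loc}_g(u)$ jointly convex, so Lemma \ref{le: admissible set convex} cannot be repeated verbatim. The coupling $-\lang g,u\rang$ is bilinear. For $\tilde u=\la u_1+(1-\la)u_2$ and $\tilde g=\la g_1+(1-\la)g_2$ one has exactly
\[
-\lang \tilde g,\tilde u\rang+\la\lang g_1,u_1\rang+(1-\la)\lang g_2,u_2\rang \;=\; \la(1-\la)\lang g_1-g_2,u_1-u_2\rang .
\]
On admissible pairs this equals $\la(1-\la)\bigl(\mathcal{Y}_0(u_1,u_1-u_2)-\mathcal{Y}_0(u_2,u_1-u_2)\bigr)\ge 0$ by monotonicity of the $p$-Laplacian. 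That is the wrong sign for \eqref{eq: convex energy inequality}.

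Worse, the conclusion itself is false for $p\neq 2$. $\Alg^{\loc}$ is the graph of $S_{\loc}$ over $Z_{\text{ad}}$, so it is convex only if $S_{\loc}$ is affine there. But $(p-1)$-homogeneity of the operator gives $S_{\loc}(tg)=t^{1/(p-1)}S_{\loc}(g)$. If, say, $0$ and some $g\neq 0$ lie in $Z_{\text{ad}}$, the midpoint $(\tfrac12 S_{\loc}(g),\tfrac12 g)$ is not admissible, because $S_{\loc}(\tfrac12 g)=2^{-1/(p-1)}S_{\loc}(g)\neq \tfrac12 S_{\loc}(g)$.

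So your midpoint argument proves uniqueness only for $p=2$, where $S_{\loc}$ is linear and $g\mapsto G(S_{\loc}g)+\int\La|g|^{2}$ is strictly convex. For $p\neq 2$ the reduced functional need not be convex, already for linear $F$. Uniqueness would then need a genuinely different argument, if it holds at all. The paper's own justification (``$\Alg^{\loc}$ is convex as well'') rests on the same lemma, so the gap is inherited, but it is real.

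A related inherited caveat concerns your identification of the minimizer with the weak solution. For non-scalar $\mathbb{A}$ and $p\neq 2$, the Euler--Lagrange equation of $\frac1p\int\mathbb{A}|\grad u|^{p-2}\grad u:\grad u$ is not \eqref{stateEqnWkFormL}. Its $A$-derivative carries the extra term $\frac{p-2}{p}|A|^{p-4}\bigl((\mathbb{A}A:A)A-|A|^2\mathbb{A}A\bigr)$. Both your construction of $S_{\loc}(g)$ and your closure step identify it with the energy minimizer, and that identification requires $\mathbb{A}$ to be a scalar multiple of the identity.
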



\subsubsection{Local non-convex problem}\label{Subsec: localNonconvex}
 
	Now we present the local analogue of Problem \ref{nlProbStatementG}. First, 	we introduce the analogous local energy, $\mathcal{W}^{\loc}_g: L^p(\R^n; \R^n) \rightarrow \R \cup \{+\infty\}$, for $g\in Z_{\text{ad}}$, defined as
	\begin{equation*}\label{Eq: GenEnergyL}
		\mathcal{W}^{\loc}_g(u)\ := \ \begin{cases}
			\int_{\tilde{\Om}}W(x, u(x), \grad u(x))dx - \lang g, u\rang, \ u \in W^{1, p}(\tilde{\Om}; \R^n) \\
			+\infty  \ \text{otherwise},
		\end{cases}
	\end{equation*}
    where $W$ is given as in Subsection \ref{subsec: convexity}.

	\begin{problem}[Non-convex local problem]\label{lProbStatementG}
		Our local non-convex problem is to find
		\begin{equation*}\label{stateMinLG}
			\mathcal{F}(\overline{u}, \overline{g}) \ = \ \min_{(u, g)
				\in \mathcal{K}^{\loc}}\mathcal{F}(u, g),
		\end{equation*} 
        where the admissible class of pairs is given by
    \begin{equation*}\label{localAdmiClassG}
			\mathcal{K}^{\loc} \ := \ \{(u, g) \in W^{1, p}_0(\tilde{\Om}; \R^n) \times Z_{\text{ad}} \ | \ u \in  \argmin_{v \in W^{1, p}_0(\tilde{\Om}; \R^n)}\mathcal{W}^{\loc}_g(v)\}.
		\end{equation*}

		\end{problem}
By reproducing the same steps as in subsection \ref{Sec: nonconvex} we obtain the following result.

\begin{theorem}[Existence of solutions to non-convex local control problem]
	Let $W: \tilde{\Om} \times \R^n \times \R^{n \times n} \rightarrow \R$ be an energy density such that $\mathcal{W}^{\loc}_{g}(\cdot)$ is not identically infinity in $W^{1, p}_0(\tilde{\Om}; \R^n)$. Assume, in addition, one of the following: 
    \begin{itemize}
        \item[a)] $W$ is polyconvex and \ref{H1} is satisfied with $p\geq n-1$ and $q\geq \frac{n}{n-1}$.
        \item[b)] $W$ is quasiconvex and \ref{H2} is satisfied.
    \end{itemize}
  Then,  Problem \ref{lProbStatementG} has a minimizer $(\overline{u}, \overline{g}) \in \mathcal{K}^{\text{loc}}$.
\end{theorem}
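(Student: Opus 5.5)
The plan is to run the direct method on $\mathcal{K}^{\loc}$, following the proof of Theorem \ref{existenceMinsNLGeneral}. Each nonlocal tool is replaced by its classical counterpart:
\begin{itemize}
\item the nonlocal Poincar\'e inequality (Proposition \ref{Prop: NLPoincare}) becomes the classical Poincar\'e inequality on $W^{1,p}_0(\tilde{\Om};\R^n)$;
\item the compact embedding (Proposition \ref{Prop: CompactEmbedding}) becomes Rellich--Kondrachov;
\item the nonlocal lower semicontinuity results become the classical ones: Ball's theorem for polyconvex integrands under \ref{H1} with $p\ge n-1$, $q\ge \frac{n}{n-1}$, and the Acerbi--Fusco/Marcellini theorem \cite{acerbi1984semicontinuity, marcellini1985approximation} for quasiconvex Carath\'eodory integrands under \ref{H2}.
\end{itemize}

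\textbf{Step 1: $\mathcal{K}^{\loc}\neq\emptyset$ and the admissible states are bounded.} For each $g\in Z_{\text{ad}}$, the local energy $\mathcal{W}^{\loc}_g$ has a minimizer in $W^{1,p}_0(\tilde{\Om};\R^n)$. To see this, absorb $-\lang g,u\rang$ into the coercivity exactly as in the proof of Proposition \ref{minimizationOfNLEnergy}, using $\ep$-Young and Poincar\'e. Then apply the classical existence theorem under a) or b), which is available because the energy is not identically $+\infty$.

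For boundedness, mimic Lemma \ref{bddAdmissibleState}. Test minimality with $v=0$ to get $\mathcal{W}^{\loc}_g(u)\le \mathcal{W}^{\loc}_g(0)$. Combining this with the lower bound $W\ge a+c|\grad u|^p$ (from \ref{H1} or \ref{H2}), Hölder, Poincar\'e and the boundedness of $Z_{\text{ad}}$ gives
\[
c\|\grad u\|_{L^p}^p\le C\|\grad u\|_{L^p}+C'.
\]
Since $p>1$, this bounds $\|\grad u\|_{L^p}$ uniformly over all admissible states. One point needs care: $\mathcal{W}^{\loc}_g(0)=\int W(x,0,0)\,dx$ must be finite. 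Under \ref{H2} this follows from the upper growth bound. Under \ref{H1}, I would instead compare with a fixed $v_0$ of finite energy, whose existence is assumed, and use that $|\lang g,v_0\rang|$ is uniformly bounded for $g\in Z_{\text{ad}}$.

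\textbf{Step 2: $\mathcal{F}$ is bounded below on $\mathcal{K}^{\loc}$.} By \ref{F3} and Sobolev embedding $W^{1,p}_0\subset L^{p^*}\subset L^{p^*_s}$, where $p^*_s<p^*$ on the bounded domain $\tilde{\Om}$, we get $\mathcal{F}(u,g)\ge -c_1\|u\|^{p_s^*}_{L^{p_s^*}}-\|\ell\|_{L^1}$. Step 1 bounds the right-hand side uniformly.

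\textbf{Step 3: extract a limit.} Take a minimizing sequence $(u_k,g_k)$. By reflexivity, and up to subsequences, $g_k\rightharpoonup\bar g$ in $L^{p'}$, with $\bar g\in Z_{\text{ad}}$ because $Z_{\text{ad}}$ is closed and convex. Likewise $u_k\rightharpoonup\bar u$ in $W^{1,p}_0$, and $u_k\to\bar u$ in $L^p$ by Rellich--Kondrachov. Under a) we also need weak convergence of $\cof\grad u_k$ and of $\det\grad u_k$ (the latter in $L^1$), so that Ball's weak continuity of minors applies. The bound on $\int\cof$ and on $\int h(|\det|)$ comes again from $\mathcal{W}^{\loc}_{g_k}(u_k)\le \mathcal{W}^{\loc}_{g_k}(v_0)$, with de la Vall\'ee-Poussin giving equi-integrability of the determinants.

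\textbf{Step 4: closedness, the main obstacle.} I expect the local analogue of Lemma \ref{closureOfMinimization}, namely that $(\bar u,\bar g)\in\mathcal{K}^{\loc}$, to be the hardest step. Fix $v\in W^{1,p}_0$. By weak lower semicontinuity of $u\mapsto\int W(x,u,\grad u)$ and strong-times-weak convergence $\lang g_k,u_k\rang\to\lang\bar g,\bar u\rang$, we get
\[
\mathcal{W}^{\loc}_{\bar g}(\bar u)\le\liminf_{k\to\infty}\mathcal{W}^{\loc}_{g_k}(u_k)\le\liminf_{k\to\infty}\mathcal{W}^{\loc}_{g_k}(v)=\mathcal{W}^{\loc}_{\bar g}(v),
\]
where the last equality holds because $v$ is fixed and only the linear term $\lang g_k,v\rang$ varies.

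Finally, $\mathcal{F}(\bar u,\bar g)\le\liminf\mathcal{F}(u_k,g_k)$. For the first term, $u_k\to\bar u$ in $L^p$ and, by interpolation with the $L^{p^*}$ bound, also in $L^{p^*_s}$; continuity of $F$ together with the growth \ref{F3} and Vitali then gives convergence. For the second term, $g\mapsto\int\La|g|^{p'}$ is convex and continuous, hence weakly lower semicontinuous. This yields a minimizer.
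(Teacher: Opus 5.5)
Your proposal is correct and follows the paper's own argument. The paper proves this theorem only by saying it reproduces the direct method of Theorem \ref{existenceMinsNLGeneral}: boundedness of admissible states as in Lemma \ref{bddAdmissibleState}, weak closedness of the admissible class as in Lemma \ref{closureOfMinimization}, a lower bound for $\mathcal{F}$, and extraction of a minimizing subsequence, with the nonlocal Poincar\'e inequality, compact embedding and lower semicontinuity results replaced by their classical counterparts, exactly as you do. Your comparison with a fixed finite-energy competitor $v_0$ under \ref{H1}, instead of testing with $v=0$, is a useful refinement: \ref{H1} alone does not guarantee $\int_{\tilde{\Om}} W(x,0,0)\,dx<\infty$, which the paper's boundedness argument tacitly assumes.
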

In particular, by repeating the proof of Lemma \ref{bddAdmissibleState}, we have the boundedness of admissible states, i.e., that the set
\begin{equation*}\label{eq:ULAdmis}
	U_{\loc} \ := \ \{u \in W^{1, p}_{0}(\tilde{\Om}; \R^n), \ \ex g \in Z_{\text{ad}}, \ \mathcal{W}^{\loc}_g(u) \leq \mathcal{W}^{\loc}_g(v) \quad \ \fa v \in W^{1, p}_0(\tilde{\Om}; \R^n)\} ,
\end{equation*}
is a bounded subset of $W^{1, p}_0(\tilde{\Om}; \R^n)$.


\subsection{Convergence results as $s \rightarrow 1^-$}\label{subsec: s->1pLap}

For this subsection, which is independent of Subsection \ref{subsec: de->0pLap}, we assume that $\de > 0$ is fixed. In this case we specify two extra hypothesis, the first one of which concerns the kernel and is needed to invoke case \emph{a)} of the Nonlocal Poincaré Inequality of Proposition \ref{Prop: NLPoincare}.
\begin{itemize}
      \item\label{As1} Let $w_\de(0)=a_0=1$. 
      \item Let $\tilde{\Om}=\Om_{-{\de}}$.
\end{itemize}



\subsubsection{$\Gamma$-convergence of the energies}

First, we recall a special case of \cite[Theorem 3]{cueto2023variational},  which indicates strong convergence of nonlocal gradients.

	\begin{lemma} \label{Thm: gradUnifConv}
		Let $u \in W^{1, p}(\Om; \R^n)$ with $u=0$ in $\Om_{\de}\backslash\Om_{-\de}$. Then, $D^{s_j}_{\de}u \rightarrow \grad u$ strongly in $L^p(\Om; \R^{n\times n})$ for every sequence $\{s_j\}^{\infty}_{j = 1} \subset [0, 1]$, $s_j \rightarrow 1$, as $j \rightarrow \infty$. 
	\end{lemma}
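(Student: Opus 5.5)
A natural route is to write the nonlocal gradient as a convolution of the classical gradient with an integrable kernel, and then use approximate-identity arguments. For $u \in C^\infty_c$, the standard identity from \cite{bellido2023non} reads
\[
D^s_\de u \ = \ \grad u * Q^s_\de ,
\]
where $Q^s_\de$ is a radial, nonnegative, integrable kernel. It is given explicitly by
\[
Q^s_\de(x) \ = \ c_{n,s}\int_{|x|}^{\infty}\frac{\bar w_\de(r)}{r^{n+s}}\,dr ,
\]
up to normalisation, so that it is obtained from $\rho^s_\de$ by radial integration. Since $u$ vanishes on the double collar $\Om_\de\setminus\Om_{-\de}$, extending $u$ by zero gives a function in $W^{1,p}(\R^n;\R^n)$ supported in $\overline{\Om_{-\de}}$. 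The identity then extends to such $u$ by density. This uses that both sides are continuous in $u$ with respect to the $W^{1,p}$ norm: the right side because $Q^s_\de\in L^1$, the left side by the same representation.

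The key step is to show that $\{Q^{s_j}_\de\}$ behaves like an approximate identity as $s_j\to1$. Three properties are needed.
\begin{enumerate}
\item $Q^{s_j}_\de\ge0$.
\item $\int_{\R^n}Q^{s_j}_\de\to1$. This follows by computing the integral in polar coordinates: the constant $c_{n,s}$, which contains $\gamma(1-s)$, is designed exactly so that the Riesz part has unit mass in the limit. Here the assumption $\bar w_\de(0)=a_0=1$ is used, since the mass concentrating at the origin is weighted by $\bar w_\de(0)$.
\item For every $r>0$, $\int_{|x|>r}Q^{s_j}_\de\to0$. This holds because $c_{n,s}\sim(1-s)$ as $s\to1$ (via $1/\gamma(1-s)$, where $\Gamma\!\left(\tfrac{1-s}{2}\right)$ blows up), while the integrals away from the origin stay bounded, uniformly in $s$ near $1$, thanks to the compact support of $\bar w_\de$ in $[0,\de)$.
\end{enumerate}

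Given these properties, the standard mollifier argument yields $\grad u*Q^{s_j}_\de\to\grad u$ in $L^p(\R^n)$, hence in $L^p(\Om;\R^{n\times n})$. I would run it as follows.
\begin{enumerate}
\item Approximate $\grad u$ in $L^p$ by a $\varphi\in C_c$.
\item Use Young's inequality $\|f*Q\|_p\le\|Q\|_1\|f\|_p$, with $\|Q^{s_j}_\de\|_1$ uniformly bounded by property 2.
\item Apply uniform continuity of $\varphi$ for the near part of the kernel and the tail decay of property 3 for the far part.
\end{enumerate}
The sequence allows $s_j\in[0,1]$; only the values near $1$ matter, and at $s_j=1$ one interprets $D^1_\de=\grad$.

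I expect the main obstacle to be the precise asymptotics of $c_{n,s}$ and of the total mass $\int Q^s_\de$, that is, the Gamma-function computation showing the mass tends exactly to $\bar w_\de(0)=1$. If that computation is awkward, I would instead simply cite \cite[Theorem 3]{cueto2023variational}, since the statement is announced as a special case of it.
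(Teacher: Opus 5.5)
Your proposal is correct. The paper gives no argument of its own here and simply quotes \cite[Theorem 3]{cueto2023variational}. Your route is a faithful self-contained version of that cited result: zero extension, the representation $D^s_\de u=\grad u * Q^s_\de$ with $Q^s_\de(x)=c_{n,s}\int_{|x|}^{\infty}\bar{w}_\de(r)\,r^{-n-s}\,dr$, and the approximate-identity properties of $Q^s_\de$ as $s\to1^-$. You also correctly identify $a_0=1$ as what makes the limit $\grad u$ rather than $a_0\grad u$.

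The Gamma-function step you were worried about is short. By Fubini, $\int_{\R^n}Q^s_\de=\frac1n\int_{\R^n}\rho^s_\de$. Moreover $c_{n,s}\,|S^{n-1}|\sim n(1-s)$, provided you use the standard Riesz constant $\gamma(s)=\pi^{n/2}2^s\Gamma(\frac s2)/\Gamma(\frac{n-s}{2})$. The $\pi^{n/s}$ printed in the paper is a typo, and taken literally it would drive the mass to $0$.
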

            
We will also need an additional compactness property with a varying fractional parameter, originally proven in \cite[Lemma 9]{cueto2023variational}. Note that, as stated in that paper, an assumption of $\Om_{-\de}$ being Lipschitz was included, but that is not needed in our setting since we define the $\|\cdot\|_{H^{s, p, \de}(\Om; \R^n)}$ norm by density rather than by the distributional definition of $D^s_{\de}$.
	
	\begin{proposition}[Weak compactness with variable fractional exponent]\label{Prop: CompactSVary}
		Suppose $\{s_j\}^{\infty}_{j = 1} \subset [0, 1]$ is a sequence such that $s_j \rightarrow 1$ where $u_j \in H^{s_j, p, \de}_0(\Om_{-\de}; \R^n)$ is a sequence such that
		\begin{equation*}\label{Eq: CompactSVary}
			\sup_{j \in \N^+}\|D^{s_j}_{\de}u_j\|_{L^p(\Om; \R^{n\times n})} \ < \ \infty.
		\end{equation*}
		Then there exists $u \in W^{1, p}(\Om_{-\de}; \R^n)$, $u=0$ in $\Om_{\de}\backslash\Om_{-\de}$ so that, up to a non relabeled sub-sequence, $u_j \rightarrow u$ strongly in $L^p(\Om_{\de}; \R^n)$  and $D^{s_j}_{\de}u_j \rightharpoonup \grad u$ weakly in $L^p(\Om; \R^{n\times n})$.
	\end{proposition}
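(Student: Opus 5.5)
The plan is to combine a uniform $L^p$ bound with a uniform equicontinuity estimate (Fréchet--Kolmogorov) to get strong compactness. The weak limit of the nonlocal gradients is then identified through nonlocal integration by parts.

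\textbf{Step 1 (uniform bounds).} Since $w_\de(0)=1$, case a) of Proposition \ref{Prop: NLPoincare} gives a Poincaré constant independent of $s$. Hence $\sup_j\|u_j\|_{L^p(\Om;\R^n)}<\infty$. Since $u_j=0$ in $\Om_\de\setminus\Om_{-\de}$, the same bound holds in $L^p(\Om_\de;\R^n)$ and in $L^p(\R^n;\R^n)$ after extension by zero.

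\textbf{Step 2 (equicontinuity of translations).} I would use the nonlocal fundamental theorem of calculus from \cite{bellido2023non, cueto2023variational}. For $u\in H^{s,p,\de}_0(\Om_{-\de};\R^n)$ extended by zero, we have $u=V^s_\de * D^s_\de u$, where $V^s_\de\in L^1_{\loc}$ is a vector kernel. As $s\to1$, $V^s_\de$ converges in $L^1_{\loc}$ to $c_n\,x/|x|^n$, the gradient of the fundamental solution of the Laplacian. Since all $u_j$ are supported in the fixed bounded set $\overline{\Om_{-\de}}$, only $V^{s_j}_\de$ restricted to a fixed ball $B_R$ matters. This gives
\[
\|u_j(\cdot+h)-u_j\|_{L^p(\Om_{-\de})}\ \le\ \|\tau_hV^{s_j}_\de-V^{s_j}_\de\|_{L^1(B_R)}\,\|D^{s_j}_\de u_j\|_{L^p(\Om)} .
\]
I then need $\sup_j\|\tau_hV^{s_j}_\de-V^{s_j}_\de\|_{L^1(B_R)}\to0$ as $h\to0$. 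This follows from $L^1(B_R)$ convergence of $V^{s_j}_\de$ to its limit, because translations are continuous in $L^1$ and a convergent sequence is $L^1$-equicontinuous. The Fréchet--Kolmogorov theorem then yields a subsequence with $u_j\to u$ strongly in $L^p(\Om_\de;\R^n)$, and $u=0$ in $\Om_\de\setminus\Om_{-\de}$.

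\textbf{Step 3 (identification of the limit).} Up to a further subsequence, $D^{s_j}_\de u_j\rightharpoonup G$ weakly in $L^p(\Om;\R^{n\times n})$. For $\vphi\in C^\infty_c(\Om;\R^{n\times n})$, nonlocal integration by parts gives
\[
\int_\Om D^{s_j}_\de u_j:\vphi\,dx=-\int_{\Om_\de} u_j\cdot \dive^{s_j}_\de\vphi\,dx .
\]
By the divergence analogue of Lemma \ref{Thm: gradUnifConv}, we have $\dive^{s_j}_\de\vphi\to\dive\vphi$ in $L^{p'}$; for smooth $\vphi$ this convergence is even uniform, by the same Taylor-expansion argument. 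Passing to the limit using the strong convergence of $u_j$ and the weak convergence of $D^{s_j}_\de u_j$ gives $\int G:\vphi=-\int u\cdot\dive\vphi$. Hence $G=\grad u$ in the distributional sense, so $u\in W^{1,p}(\Om_{-\de};\R^n)$ and $D^{s_j}_\de u_j\rightharpoonup\grad u$.

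\textbf{Main obstacle.} The hardest step is Step 2, namely the uniform-in-$s$ control of the kernels $V^{s}_\de$ near $s=1$. I would first try to use the explicit Fourier/radial representation of $V^s_\de$ in \cite{cueto2023variational}. There, $V^s_\de$ is written as the Riesz-type part $c\,x/|x|^{n+1-s}$, which converges in $L^1_{\loc}$ to $c_n\,x/|x|^n$, plus a smooth remainder bounded uniformly for $s$ near $1$. A fallback is to split $u_j$ near and far from the singular part of the kernel, and to bound the fractional Sobolev seminorm $[u_j]_{W^{t,p}}$ for a fixed $t<1$ uniformly in $j$. Compactness then follows from the embedding $W^{t,p}\hookrightarrow\hookrightarrow L^p$ on bounded sets.
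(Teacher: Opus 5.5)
The paper gives no proof of this proposition; it imports it from \cite[Lemma 9]{cueto2023variational}. Your argument therefore has to stand on its own.

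\textbf{Steps 1 and 3 are fine.} In Step 3, $\dive^{s}_\de\vphi=Q^s_\de*\dive\vphi$, where $D^s_\de u=\grad(Q^s_\de*u)$. Because $w_\de(0)=1$, the kernels $Q^s_\de$ form an approximate identity as $s\to1^-$. The identification of $G$ also shows that the whole subsequence converges weakly.

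\textbf{The gap is Step 2.} The $L^1_{\loc}$ convergence $V^{s}_\de\to c_n x/|x|^n$ is exactly the uniform-in-$s$ information the proposition encodes, and you only assert it. It does not follow from the fixed-$s$ properties of $V^s_\de$ in \cite{bellido2023non}, which give no control of constants as $s\to1^-$.

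Your suggested way of proving it also falls short as stated. The Riesz part $c\,x/|x|^{n+1-s}$ does converge in $L^1_{\loc}$, by dominated convergence on bounded sets. But a remainder that is merely bounded uniformly in $s$ gives no uniform control of $\|\tau_h V^{s_j}_\de-V^{s_j}_\de\|_{L^1(B_R)}$, since a bounded family may oscillate. You need the remainder to converge, for instance locally uniformly. For $n\ge2$ this can be done by showing that its symbol $\frac{-i\xi}{|\xi|^2}\bigl(\widehat{Q^s_\de}(\xi)^{-1}-|\xi|^{1-s}\bigr)$ tends to $0$ in $L^1(\R^n)$. That requires two inputs: $\widehat{Q^s_\de}\to1$ locally uniformly, and high-frequency bounds on $|\xi|^{s-1}-\widehat{Q^s_\de}(\xi)$ that are uniform in $s$. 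That computation is the real content of your Step 2, and it is missing.

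\textbf{How to close it.} You do not actually need it. Your fallback is the right idea, and the uniform estimate it requires is already listed in Subsection \ref{subsec: framework}: the embedding $H^{s_2,p,\de}_0\subset H^{s_1,p,\de}_0$ for $0\le s_1\le s_2\le1$ holds with a constant depending only on $n$, $\de$, $p$.
\begin{enumerate}
\item Fix $t\in(0,1)$. For $j$ large, $s_j\ge t$, so Step 1 combined with this embedding gives $\sup_j\|u_j\|_{H^{t,p,\de}(\Om;\R^n)}<\infty$.
\item Reflexivity and Proposition \ref{Prop: CompactEmbedding}, applied at the \emph{fixed} exponent $t$, yield a subsequence converging strongly in $L^p(\Om;\R^n)$.
\item Since every $u_j$ vanishes on $\Om_\de\setminus\Om_{-\de}$, the convergence also holds in $L^p(\Om_\de;\R^n)$.
\end{enumerate}
Your Step 3 then completes the proof. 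This route needs no asymptotics of $V^s_\de$ and no Fr\'echet--Kolmogorov argument, and it uses only results the paper already quotes.
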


The next $\Ga$-convergence result of the energy functionals is actually derived from the one proven in \cite[Theorem 7]{cueto2023variational}. Indeed, even though \cite[Theorem 7]{cueto2023variational} does not include $u$-dependence, this does not play a role in the proof, which is based on the translation results (from nonlocal to local gradients) derived in \cite{cueto2023variational} and the characterization of quasiconvexity in this framework (Theorem 5 of the same paper), which does include the $u$-dependence. Therefore the very same steps of that proof also apply for energy densities exhibiting the $u$-dependence. Similarly to \ref{minimizationOfNLEnergy}, these energy densities verify the required hypotheses. While the proof of \cite[Theorem 7]{cueto2023variational} does not explicitly show a $u$-depenedence, we may repeat the argument of that proof because the weak lower semicontinuity result\cite[Theorem 8.11]{dacorogna2007direct} can be used to allow for a $u$-dependence. Additional references include \cite[Theorem 7.5]{fonseca2007modern} or \cite[Theorem 5.4]{ball1981null} for convex energy density functions, and  \cite[Theorem 5.20]{Rin} for quasiconvex energy density functions with a stronger growth condition.


\begin{theorem}[$\Ga$-convergence as $s \rightarrow 1^-$]\label{Gas->1Theorem}
	Let $g \in Z_{\text{ad}}$ be fixed and $W$ be a quasiconvex energy density verifying the hypothesis of Subsection \ref{Sec: nonconvex} and, in particular \ref{H2}. Then, the family of functionals $\{\mathcal{W}^{\de, s}_g\}_{s < 1}$ will $\Ga$-converge in the strong $L^p(\Om; \R^n)$ topology to $\mathcal{W}^{\loc}_g$, which we will denote $\mathcal{W}^{\de, s}_g \xrightarrow{\Ga, \ s \rightarrow 1^-} \mathcal{W}^{\loc}_g$. In other words, we have the following:
	\begin{enumerate}[label=\textbf{GCs\arabic*}]
		\item\label{GCs1} If $\{u_s\}_{s < 1} \subset L^p(\Om; \R^n)$ is a sequence such that $u_s \rightarrow u$ strongly in $L^p(\Om; \R^n)$, then we have the \textbf{lim-inf inequality}
		\begin{equation*}\label{Gas->1Liminf}
			\mathcal{W}^{\loc}_g(u) \ \leq \ \liminf_{s \rightarrow 1^-}\mathcal{W}^{\de, s}_g(u_s).
		\end{equation*}
		\item\label{GCs2} If $u \in L^p(\Om; \R^n)$, then there exists a \textbf{recovery sequence} of $\{u_s\}_{s < 1} \subset L^p(\Om; \R^n)$ such that $u_s \rightarrow u$ strongly in $L^p(\Om; \R^n)$ and
		\begin{equation*}\label{Gas->1Limsup}
			\mathcal{W}^{\loc}_g(u) \ \geq \ \limsup_{s \rightarrow 1^-} \mathcal{W}^{\de, s}_g(u_s).
		\end{equation*}
	\end{enumerate}
\end{theorem}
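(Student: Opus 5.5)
We follow the standard $\Gamma$-convergence scheme and treat the two inequalities separately. The linear term $-\lang g, u\rang$ is continuous under strong $L^p(\Om;\R^n)$ convergence, because $g \in Z_{\text{ad}} \subset L^{p'}(\Om;\R^n)$. It therefore passes to the limit in both \ref{GCs1} and \ref{GCs2}, and it suffices to prove the $\Gamma$-convergence of the integral part $u \mapsto \int_\Om W(x,u,D^s_\de u)\,dx$, extended by $+\infty$ off the energy space. We also read the energy spaces as the Dirichlet spaces $H^{s,p,\de}_0(\Om_{-\de};\R^n)$ and $W^{1,p}_0(\Om_{-\de};\R^n)$, consistent with the choice $\tilde\Om = \Om_{-\de}$.

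\emph{Lim-inf inequality.} Take $u_s \to u$ in $L^p$ and a sequence $s_j \to 1^-$ realizing the $\liminf$. We may assume the $\liminf$ is finite, so $u_{s_j} \in H^{s_j,p,\de}_0(\Om_{-\de};\R^n)$ and $\mathcal{W}^{\de,s_j}_g(u_{s_j})$ is bounded. The lower bound $c|A|^p - c_0 \le W$ in \ref{H2}, together with Hölder's inequality, gives
\[
c\|D^{s_j}_\de u_{s_j}\|^p_{L^p} \le C + \|g\|_{L^{p'}}\|u_{s_j}\|_{L^p},
\]
and the right-hand side is bounded since $u_{s_j}$ converges in $L^p$. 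Proposition \ref{Prop: CompactSVary} then shows that $u \in W^{1,p}(\Om_{-\de};\R^n)$ with $u=0$ in $\Om_\de\setminus\Om_{-\de}$, and that along a subsequence $D^{s_j}_\de u_{s_j} \rightharpoonup \grad u$ weakly in $L^p$.

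This is the main obstacle: we have weak convergence of nonlocal gradients, not of gradients of a fixed sequence, so classical Acerbi--Fusco lower semicontinuity does not apply directly. To get around it we follow \cite[Theorem 7]{cueto2023variational} and use the translation between nonlocal and local gradients, $D^{s}_\de u = \grad(Q^s_\de * u)$ with $Q^s_\de$ the associated kernel. Set $v_j := Q^{s_j}_\de * u_{s_j}$. Then $\grad v_j = D^{s_j}_\de u_{s_j} \rightharpoonup \grad u$, and $v_j \to u$ strongly in $L^p$, because the convolution kernels become concentrated as $s \to 1$ and $u_{s_j} \to u$. This reduces the problem to the classical setting $\liminf \int W(x,v_j,\grad v_j) \ge \int W(x,u,\grad u)$, which holds for quasiconvex Carathéodory $W$ with $p$-growth by \cite[Theorem 8.11]{dacorogna2007direct}. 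Two points remain. First, the first slot contains $u_{s_j}$ rather than $v_j$; since $\|u_{s_j}-v_j\|_{L^p} \to 0$ and $W$ has $p$-growth, we handle this by strong convergence in the $u$-variable, passing to a.e. convergent subsequences and using equi-integrability arguments. Second, the boundary layer must be cut off with a localization argument exactly as in that reference.

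\emph{Recovery sequence.} If $u \notin W^{1,p}_0(\Om_{-\de};\R^n)$ there is nothing to prove. Otherwise we take the constant sequence $u_s := u$. By the embedding $W^{1,p} \subset H^{s,p,\de}$, the sequence is admissible, and Lemma \ref{Thm: gradUnifConv} gives $D^s_\de u \to \grad u$ strongly in $L^p$. By \ref{H2}, $W$ is Carathéodory with upper bound $C_2(1+|z|^p+|A|^p)$, so the Nemytskii operator $(z,A) \mapsto W(\cdot,z,A)$ is continuous on $L^p \times L^p$ (Krasnosel'skii, via Vitali). Hence $\mathcal{W}^{\de,s}_g(u) \to \mathcal{W}^{\loc}_g(u)$, which gives the $\limsup$ inequality with equality.
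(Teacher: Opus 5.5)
Your proposal is correct and follows essentially the same route as the paper. The paper defers to the proof of \cite[Theorem 7]{cueto2023variational}, which rests on the translation $D^s_\de u=\grad(Q^s_\de*u)$ between nonlocal and local gradients, and invokes \cite[Theorem 8.11]{dacorogna2007direct} to handle the $u$-dependence, exactly as you do. One normalization point to tighten, which the paper shares: $\mathcal{W}^{\de,s}_g$ integrates over $\Om$ and $u=0$ on $\Om\setminus\Om_{-\de}$, so both arguments actually produce the limit $\int_\Om W(x,u,\grad u)\,dx-\lang g,u\rang$. This agrees with $\mathcal{W}^{\loc}_g(u)$ on $\tilde\Om=\Om_{-\de}$ only up to the additive constant $\int_{\Om\setminus\Om_{-\de}}W(x,0,0)\,dx$, a shift that is harmless for the convergence of minimizers.
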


\begin{remark}\label{Rmk: Mosco}
	The aforementioned argument also demonstrates that $\mathcal{W}^{\de, s}_g \xrightharpoonup{\Ga, \ s \rightarrow 1^-} \mathcal{W}^{\loc}_g$ in the weak topology on $L^p(\Om; \R^n)$. Hence in this setting our energies exhibit \textbf{Mosco convergence}, which says that the lim-inf inequality holds with respect to the weak topology, whereas the lim-sup inequality holds with respect to the strong topology. See \cite{gounoue2020mosco} for an introduction to Mosco convergence in the context of a different class of nonlocal energies.
\end{remark}


\subsubsection{Convergence of states and controls}

Now we may obtain the convergence of minimizers by following the proof of \cite[Theorem 5.7]{mengesha2023control}. The first step in this direction is a compactness lemma, akin to \cite[Lemma 5.1]{mengesha2023control}. 
\begin{lemma}[Compactness of controls and states, $s \rightarrow 1^-$]\label{compactnessSolns} Let $W$ be an energy density verifying the conditions of Theorem \ref{Gas->1Theorem} and
	$\{(\overline{u_s}, \overline{g_s})\}_{s < 1}$ denote a family of solutions to Problem \ref{nlProbStatementG}. Then there exists a sequence $\{s_j\}^{\infty}_{j = 1}$ such that $s_j \rightarrow 1$, and a pair $(\widehat{u}, \widehat{g}) \in \mathcal{K}^{\loc}$ such that $\overline{u_{s_j}} \rightarrow \widehat{u}$ strongly in $L^p(\Om_{\de}; \R^n)$ and $\overline{g_{s_j}} \rightharpoonup \widehat{g}$ weakly in $L^{p'}(\Om; \R^n)$, as $j \rightarrow \infty$. 
\end{lemma}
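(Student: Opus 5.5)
The argument has three stages: uniform bounds, extraction of limits, and identification of the limit as an admissible local pair.

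\textbf{Uniform bounds.} Since $Z_{\text{ad}}$ is bounded, closed and convex in $L^{p'}(\Om;\R^n)$, it is weakly sequentially compact. Hence for any $s_j \to 1$ there is a subsequence with $\overline{g_{s_j}} \rightharpoonup \widehat{g} \in Z_{\text{ad}}$ weakly in $L^{p'}$.

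For the states, repeat the proof of Lemma \ref{bddAdmissibleState}, testing the minimality of $\overline{u_s}$ against $v=0$. Two points must be uniform in $s$:
\begin{itemize}
\item[(i)] the Poincar\'e constant, which is independent of $s$ by part a) of Proposition \ref{Prop: NLPoincare}, since we assume $w_\de(0)=1$;
\item[(ii)] the value $\mathcal{W}^{\de,s}_0(0)=\int_\Om W(x,0,0)\,dx$, which does not depend on $s$ at all and is finite by the upper bound in \ref{H2}.
\end{itemize}
Combining the lower bound $c|A|^p-c_0\le W$ with H\"older's inequality then gives
\[
c\|D^{s}_\de \overline{u_s}\|^p_{L^p}\le C\|D^{s}_\de \overline{u_s}\|_{L^p}+C',
\]
with $C,C'$ independent of $s$. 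Therefore $\sup_s\|D^{s}_\de\overline{u_s}\|_{L^p(\Om;\R^{n\times n})}<\infty$.

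\textbf{Extraction.} Proposition \ref{Prop: CompactSVary} applied to this bounded family yields a further subsequence and some $\widehat{u}\in W^{1,p}(\Om_{-\de};\R^n)$, with $\widehat u=0$ on $\Om_\de\setminus\Om_{-\de}$, such that $\overline{u_{s_j}}\to\widehat{u}$ strongly in $L^p(\Om_\de;\R^n)$ and $D^{s_j}_\de\overline{u_{s_j}}\rightharpoonup\grad\widehat{u}$ weakly in $L^p$. Since $\tilde{\Om}=\Om_{-\de}$ and $\widehat u$ vanishes in the collar, I expect $\widehat{u}\in W^{1,p}_0(\tilde\Om;\R^n)$. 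Strictly, this trace identification needs some regularity of $\pa\Om_{-\de}$, or the characterization of $W^{1,p}_0$ as the set of $W^{1,p}(\R^n)$ functions vanishing outside $\tilde\Om$. I will invoke the latter, in the same spirit in which Proposition \ref{Prop: CompactSVary} is stated.

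\textbf{Admissibility of the limit (main step).} It remains to show $(\widehat{u},\widehat{g})\in\mathcal{K}^{\loc}$, i.e.\ $\mathcal{W}^{\loc}_{\widehat g}(\widehat u)\le \mathcal{W}^{\loc}_{\widehat g}(v)$ for every $v\in W^{1,p}_0(\tilde\Om;\R^n)$. This is the obstacle, because both the control and the functional vary with $j$. The $\Gamma$-convergence of Theorem \ref{Gas->1Theorem} is for fixed $g$, so I split off the linear term. Write
\[
\mathcal{W}^{\de,s}_g(u)=\mathcal{E}^{s}(u)-\lang g,u\rang,
\]
and apply Theorem \ref{Gas->1Theorem} with $g=0$ to $\mathcal{E}^s$. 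The linear term is continuous along our sequences: $\lang g_j,u_j\rang\to\lang \widehat g,\widehat u\rang$ whenever $g_j\rightharpoonup\widehat g$ in $L^{p'}$ and $u_j\to\widehat u$ strongly in $L^p$, since it is a weak--strong pairing.

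Now fix $v\in W^{1,p}_0(\tilde\Om;\R^n)$ and take the recovery sequence $v_s$ from \ref{GCs2}. I need $v_s\in H^{s,p,\de}_0(\Om_{-\de};\R^n)$, so that it is an admissible competitor. The natural choice is $v_s=v$ (extended by zero), for which Lemma \ref{Thm: gradUnifConv} gives $D^{s}_\de v\to\grad v$ strongly in $L^p$. The growth condition in \ref{H2} and the Carath\'eodory property, via dominated convergence on a further subsequence, then give $\mathcal{E}^{s_j}(v)\to\mathcal{E}^{\loc}(v)$. I will use this explicit recovery sequence to avoid the constraint issue.

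Minimality of $\overline{u_{s_j}}$ gives
\[
\mathcal{E}^{s_j}(\overline{u_{s_j}})-\lang \overline{g_{s_j}},\overline{u_{s_j}}\rang\le \mathcal{E}^{s_j}(v)-\lang \overline{g_{s_j}},v\rang .
\]
Take the liminf on the left, using \ref{GCs1} together with the continuity of the pairing, and the limit on the right. This yields
\[
\mathcal{W}^{\loc}_{\widehat g}(\widehat u)\le\mathcal{W}^{\loc}_{\widehat g}(v).
\]
Since $v$ was arbitrary, $(\widehat u,\widehat g)\in\mathcal{K}^{\loc}$, which concludes the proof.
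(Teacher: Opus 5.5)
Your proposal is correct and follows the paper's proof: the same uniform bound obtained by testing against $v=0$ with the $s$-independent Poincar\'e constant of Proposition \ref{Prop: NLPoincare} a), extraction via Proposition \ref{Prop: CompactSVary}, and identification of the limit through the $\Gamma$-convergence of Theorem \ref{Gas->1Theorem} combined with continuity of the linear term. Where the paper only invokes equi-coercivity and ``continuous perturbations,'' you spell out the liminf/recovery comparison, with the weak--strong pairing $\lang \overline{g_{s_j}},\overline{u_{s_j}}\rang\to\lang\widehat g,\widehat u\rang$ handling the $j$-dependent control and the constant recovery sequence $v_s=v$ keeping competitors in $H^{s,p,\de}_0(\Om_{-\de};\R^n)$; your remark that $\mathcal{W}^{\de,s}_0(0)=\int_\Om W(x,0,0)\,dx$ does not depend on $s$ is also a cleaner justification than the paper's appeal to Lemma \ref{Thm: gradUnifConv} and dominated convergence.
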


\begin{proof}
	Since $\{\overline{g_s}\}_{s < 1}$ belongs to $Z_{\text{ad}}$, a bounded and weakly closed subset of $L^{p'}(\Om; \R^n)$, existence of a function $\widehat{g} \in Z_{\text{ad}}$ such that $\overline{g_s} \rightharpoonup \widehat{g}$ weakly in $L^{p'}(\Om; \R^n)$ immediately follows from the reflexivity of $L^{p'}(\Om; \R^n)$ and weak closedness of $Z_{\text{ad}}$.
    
	As for the states, we repeat the steps of the proof of  Lemma \ref{bddAdmissibleState} to obtain
    \begin{equation*}
             \|D^{s_j}_{\de}\overline{u_{s_j}}\|^p_{L^p(\Om; \R^{n \times n})} \ \leq \ \|\overline{g_{s_j}}\|_{L^{p'}(\Om; \R^n)}\|\overline{u_{s_j}}\|_{L^p(\Om; \R^n)} +\|a\|_{L^1(\Om)} +\left| W_0^{\de,s_j}(0)\right|.
       \end{equation*}    
       
       By \emph{\ref{H2}}, Theorem \ref{Thm: gradUnifConv} and the Dominated Convergence Theorem we have that the term $\left| W_0^{\de,s_j}(0)\right|$ is bounded in $j$.  Using the Nonlocal Poincaré inequality Proposition \ref{Prop: NLPoincare} \emph{a)}, (see also \cite[Theorem 4]{cueto2023variational}), together with the uniform bound from the definition of $Z_{\text{ad}}$ leads to
        \begin{equation*}
               \|D^{s_j}_{\de}\overline{u_{s_j}}\|^p_{L^p(\Om; \R^{n \times n})} \ \leq  \ C\|D^{s_j}_{\de}\overline{u_{s_j}}\|_{L^p(\Om; \R^{n \times n})} +\|a\|_{L^1(\Om)}+\bar{C},
        \end{equation*}
    where $\bar{C},C>0$ are constants independent of $j$, from which it follows that $\{[\overline{u_{s_j}}]_{H^{s_j, p, \de}(\Om; \R^n)}\}^{\infty}_{j = 1}$ is a bounded sequence (of real numbers). Then, due to the compactness result Proposition \ref{Prop: CompactSVary} (see also \cite[Lemma 9]{cueto2023variational}), there exists a (not relabeled) sub-sequence and a function $\widehat{u} \in W^{1, p}(\Om_{-\de}; \R^n)$, verifying, in addition, that $u=0$ in $\Om_{\de}\backslash\Om_{-\de}$, such that $\overline{u_{s_j}} \rightarrow \widehat{u}$ strongly in $L^p(\Om_{\de}; \R^n)$ as $j \rightarrow \infty$. 
 Moreover, due to the $\Ga$-convergence result Theorem \ref{Gas->1Theorem}, and observing that $\{\mathcal{W}^{\de, s}_0\}_{s < 1}$ is an equi-coercive family of functionals with respect to strong convergence in $L^p(\Om; \R^n)$, we have that minimizers of the family $\{\mathcal{W}^{\de, s}_0\}_{s < 1}$ converge to minimizers of the local energy $\mathcal{W}^{\loc}_0$. Furthermore, since $\mathcal{W}^{\de, s}_{\overline{g_{s}}}$ and $\mathcal{W}^{\loc}_{\overline{g}}$ serve as continuous perturbations of $\mathcal{W}^{\de, s}_0$ and $\mathcal{W}^{\loc}_0$, respectively, this property extends to these energies as well; in particular, $(\widehat{u}, \widehat{g}) \in \mathcal{K}^{\loc}$.
    
\end{proof}
Next we have the convergence of minimizers of the nonlocal optimal control problems to their local counterpart. However, even though the previous result holds for the non-convex case, which implies that the limit of solutions of the nonlocal optimal control problems are admissible pairs of the local one, given the lack of uniqueness of the non-convex state-equation we are only able to provide this result for the convex case. 
We will make use of the uniqueness result Theorem \ref{WPLControlThm}, which holds whenever the cost functional is strictly convex.

\begin{theorem}[Convergence of minimizers as $s \rightarrow 1^-$]\label{convMinss->1}   
Let $W$ be an energy density verifying the conditions of Theorem \ref{Gas->1Theorem} and suppose $F$ is convex in its second argument. Let $(\overline{u_s}, \overline{g_s})$ denote the solution of Problem \ref{nlProbStatement}, while $(\overline{u}, \overline{g})$ denotes the solution of Problem \ref{lProbStatement}. Then, as $s \rightarrow 1^-$, we have that 
\begin{itemize}
    \item[a)]$\overline{u_s} \rightarrow \overline{u}$ strongly in $L^p(\Om_{\de}; \R^n)$,
    \item[b)] $\overline{g_s} \rightharpoonup \overline{g}$ weakly in $L^{p'}(\Om; \R^n)$,
    \item[c)] $D^s_\de \bar{u}_s \to \nabla \bar{u}$ strongly in $L^p(\Om; \R^{n \times n}).$
\end{itemize}
Moreover, we have the limit 
	\begin{equation}\label{Eq: lims->1MinCon}
		\lim_{s \rightarrow 1^-}\mathcal{F}(\overline{u_s}, \overline{g_s}) \ = \ \mathcal{F}(\overline{u}, \overline{g}).
	\end{equation}
\end{theorem}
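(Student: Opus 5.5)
We follow the scheme of \cite[Theorem 5.7]{mengesha2023control}: first obtain subsequential limits from compactness, then identify the limit with the unique local optimum by a $\Gamma$-type comparison with a recovery sequence, and finally upgrade convergence of the gradients.

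\textbf{Step 1 (compactness).} Fix an arbitrary sequence $s_j\to 1^-$. Lemma \ref{compactnessSolns} applies in the convex $p$-Laplacian setting, since the density \eqref{Eq: pLapDens} is convex and satisfies \ref{H2}. It yields a subsequence and a pair $(\widehat u,\widehat g)\in\mathcal{A}^{\loc}$ such that $\overline{u_{s_j}}\to\widehat u$ strongly in $L^p(\Om_\de;\R^n)$ and $\overline{g_{s_j}}\rightharpoonup\widehat g$ weakly in $L^{p'}(\Om;\R^n)$. For strictly convex $W$, the minimizers of $\mathcal{W}^{\loc}_{\widehat g}$ coincide with the weak solutions of \eqref{stateEqnWkFormL}, so $\widehat u=S_{\loc}(\widehat g)$.

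\textbf{Step 2 (optimality of the limit).} By \ref{F2}, \ref{F3}, the strong $L^p$ convergence and weak lower semicontinuity of the convex term $\int\La|g|^{p'}$, we get
\[
\mathcal{F}(\widehat u,\widehat g)\le\liminf_j\mathcal{F}(\overline{u_{s_j}},\overline{g_{s_j}}).
\]
The growth $p^*_s$ in \ref{F3} requires either an interpolation argument with the uniform $H^{s,p,\de}$ bound or simply restricting to $p$-growth; we use whichever makes $\int F(x,\cdot)$ continuous along the sequence.

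For the reverse inequality, take the local optimum $(\overline u,\overline g)$ and set $v_j:=S_{\de,s_j}(\overline g)$, so $(v_j,\overline g)\in\mathcal{A}^\de_{s_j}$. Lemma \ref{compactnessSolns} with the fixed control $\overline g$, together with uniqueness of $S_{\loc}(\overline g)$, gives $v_j\to\overline u$ in $L^p$ for the whole sequence. Hence
\[
\limsup_j\mathcal{F}(\overline{u_{s_j}},\overline{g_{s_j}})\le\lim_j\mathcal{F}(v_j,\overline g)=\mathcal{F}(\overline u,\overline g)\le\mathcal{F}(\widehat u,\widehat g).
\]
All these inequalities are therefore equalities. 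By the uniqueness in Theorem \ref{WPLControlThm}, $(\widehat u,\widehat g)=(\overline u,\overline g)$. Since the limit is independent of the subsequence, items a), b) and \eqref{Eq: lims->1MinCon} hold for the full family $s\to1^-$.

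\textbf{Step 3 (strong gradient convergence, the main obstacle).} Proposition \ref{Prop: CompactSVary} gives only $D^{s}_\de\overline{u_s}\rightharpoonup\nabla\overline u$ weakly in $L^p$. To upgrade this, we first show convergence of energies. Test the nonlocal equation with $\overline{u_s}$ itself:
\[
\mathcal{Y}_{\de,s}(\overline{u_s},\overline{u_s})=\langle\overline{g_s},\overline{u_s}\rangle\to\langle\overline g,\overline u\rangle=\mathcal{Y}_0(\overline u,\overline u),
\]
where the convergence is weak times strong. Thus
\[
\int\mathbb{A}|D^s_\de\overline{u_s}|^{p-2}D^s_\de\overline{u_s}:D^s_\de\overline{u_s}\to\int\mathbb{A}|\nabla\overline u|^{p-2}\nabla\overline u:\nabla\overline u.
\]
Next we compare $\overline{u_s}$ with $\overline u$, which is admissible as a test function since $\overline u\in W^{1,p}$ vanishes on the collar. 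By Lemma \ref{Thm: gradUnifConv}, $D^s_\de\overline u\to\nabla\overline u$ strongly. We then use the monotonicity inequality for $\xi\mapsto\mathbb{A}|\xi|^{p-2}\xi$ applied to $\xi=D^s_\de\overline{u_s}$ and $\eta=D^s_\de\overline u$. The cross terms converge: one by weak–strong pairing, the other by the equation tested with $\overline u$, which gives $\langle\overline{g_s},\overline u\rangle\to\langle\overline g,\overline u\rangle$. Hence
\[
\int\big(\mathbb{A}|\xi|^{p-2}\xi-\mathbb{A}|\eta|^{p-2}\eta\big):(\xi-\eta)\to0,
\]
which yields $\|D^s_\de\overline{u_s}-D^s_\de\overline u\|_{L^p}\to0$. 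For $p\ge2$ this follows directly from the standard monotonicity estimate. For $p<2$ we combine the estimate with Hölder's inequality and the uniform $L^p$ bounds. Together with Lemma \ref{Thm: gradUnifConv} this gives c).

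The delicate point is that the monotonicity estimates hold cleanly only for scalar coefficients. For a general matrix $\mathbb{A}$ one would use uniform ellipticity \eqref{Eq: UnifEllip}, or alternatively argue via norm convergence plus weak convergence and uniform convexity of $L^p$.
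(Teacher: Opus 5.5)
Your Steps 1--2 reproduce the paper's argument for a), b) and \eqref{Eq: lims->1MinCon}. Both use compactness from Lemma \ref{compactnessSolns}, a recovery sequence obtained by solving the nonlocal state equation with the control frozen, and uniqueness of the local optimum from Theorem \ref{WPLControlThm}. The differences are minor: you compare directly with $(\overline u,\overline g)$ instead of an arbitrary pair in $\mathcal{K}^{\loc}$, which is all that is needed, and you spell out the subsequence principle for the full family.

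Part c) is where you genuinely depart. The paper uses only the energy identity $\mathcal{Y}_{\de,s}(\overline{u_s},\overline{u_s})=\langle\overline{g_s},\overline{u_s}\rangle\to\mathcal{Y}_0(\overline u,\overline u)$ and the weak convergence from Proposition \ref{Prop: CompactSVary}. It then argues Radon--Riesz style: weak convergence plus convergence of $\int_\Om\mathbb{A}|A|^{p-2}A:A$ gives strong convergence, followed by ellipticity. You additionally test the nonlocal equation with $\overline u$ and use $D^s_\de\overline u\to\nabla\overline u$ from Lemma \ref{Thm: gradUnifConv} to make the full monotonicity integral vanish. When $\mathbb{A}=a(x)I$ (or $p=2$) both arguments are complete. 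The paper's works because $L^p(a\,dx)$ is uniformly convex; yours works because $\xi\mapsto a|\xi|^{p-2}\xi$ satisfies the standard strong monotonicity inequalities. The paper's route is shorter and needs no strong convergence of $D^s_\de\overline u$ at this stage. Yours does not need the energy to be the $p$-th power of a uniformly convex norm, so it carries over to strongly monotone fluxes with $(p-1)$-growth.

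Your closing caveat is well placed, but neither fallback works for a non-scalar $\mathbb{A}$ with $p\neq2$. There the map $\xi\mapsto\mathbb{A}|\xi|^{p-2}\xi$ need not be monotone, since the symmetric part of $\mathbb{A}\,D^2(\tfrac1p|\xi|^p)$ can be indefinite. So \eqref{Eq: UnifEllip} cannot rescue the inequality. Uniform convexity of $L^p$ would need convergence of $\|D^s_\de\overline{u_s}\|_{L^p}$, whereas the energy identity only controls $\int_\Om|\xi|^{p-2}\mathbb{A}\xi:\xi$. The paper's Radon--Riesz step has the same limitation: in that regime \eqref{Eq: pLapDens} need not even be convex, and $\mathcal{Y}_{\de,s}$ is not its first variation. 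So the honest scope of both proofs of c) is $\mathbb{A}=a(x)I$ or $p=2$.

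Your hedge about \ref{F3} should also be made concrete. Since $p^*_s\to p^*$ is critical, fix the growth exponent at some $q<p^*$. The $s$-uniform embedding $H^{s,p,\de}_0\subset H^{t,p,\de}_0$ for $s\ge t$, combined with Proposition \ref{Prop: CompactEmbedding} at a fixed $t<1$ with $q<p^*_t$, then gives strong $L^q(\Om;\R^n)$ convergence of the states. That is what the dominated-convergence steps really require; the paper also uses them without comment.
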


\begin{proof} We first prove \emph{a)} and \emph{b)}. Due to Lemma \ref{compactnessSolns}, there exists a sequence $\{s_j\}^{\infty}_{j = 1}$ such that $s_j \rightarrow 1$, and a pair $(\widehat{u}, \widehat{g}) \in \mathcal{K}^{\loc}$ such that $\overline{u_{s_j}} \rightarrow \widehat{u}$ strongly in $L^p(\Om_{\de}; \R^n)$ and $\overline{g_{s}} \rightharpoonup \widehat{g}$ weakly in $L^{p'}(\Om; \R^n)$, as $s \rightarrow 1^-$. We will see that $(\widehat{u}, \widehat{g})$ is actually a minimizer of Problem \ref{lProbStatement} which, by uniqueness, coincides with $(\overline{u}, \overline{g})$. 
First we observe that every element of $\mathcal{K}^{\loc}$ can be obtained as a limit of solutions of the nonlocal problems. Indeed, given $(v, f) \in \mathcal{K}^{\loc}$ be arbitrary, and let $v_s$ denote the solution to \eqref{nonlocalAdmiClassG} with right-hand side $f$, if we repeat the argument of Lemma \ref{compactnessSolns} with $\overline{g_{s_j}} := f$ for all $s < 1$, what we obtain is a function $\widehat{v} \in W^{1, p}(\Om_{-\de}; \R^n)$ such that $\hat{v}=0$ in $\Om_{\de}\backslash\Om_{-\de}$ and $v_s \rightarrow \widehat{v}$ strongly in  $L^p(\Om_{\de}; \R^n)$. Since the local state equation is well-posed, we have that $\widehat{v} = v$.
	Then, by the Dominated Convergence Theorem (which applies due to the growth condition \eqref{Gqcoercive}),
	\begin{equation}\label{convMinss->1Eq1}   
		\lim_{s \rightarrow 1^-}\mathcal{F}(v_s, f) \ = \ \mathcal{F}(v, f).
	\end{equation}
	Meanwhile, due to the minimality of $(\overline{u_s}, \overline{g_s})$ for each $s < 1$, we have the inequality
	\begin{equation}\label{convMinss->1Eq2} 
		\liminf_{s \rightarrow 1^-}\mathcal{F}(v_s, f) \ \geq \ \liminf_{s \rightarrow 1^-}\mathcal{F}(\overline{u_s}, \overline{g_s}).
	\end{equation}
	Next, we may use the weak lower semi-continuity of the $L^{p'}$ norm and the Dominated Convergence Theorem to see that
	\begin{equation}\label{convMinss->1Eq3} 
		\liminf_{s \rightarrow 1^-}\mathcal{F}(\overline{u_s}, \overline{g_s}) \ \geq \ \mathcal{F}(\overline{u}, \overline{g}).
	\end{equation}
	The conclusion of this inequality chain is that $\mathcal{F}(v, f) \geq \mathcal{F}(\overline{u}, \overline{g})$. Moreover, setting $v := \overline{u}$ and $f := \overline{g}$ in \eqref{convMinss->1Eq1} yields the limit \eqref{Eq: lims->1MinCon}.
    
It remains to prove \emph{c)}, i.e., that  $D^s_{\de}\overline{u_s} \rightarrow \grad \overline{u}$ strongly in $L^p(\Om; \R^{n\times n})$. 
	Since the $\overline{u_s}$ are optimal, they verify the state equation. In particular, taking limits we have that 
	\begin{equation}\label{eq: strongConvMinMul0}
		\lim_{s \rightarrow 1^-}\int_{\Om} \mathbb{A}(x)|D^s_\de \overline{u_s}(x)|^{p - 2}D^s_{\de}\overline{u_s}(x):D^s_{\de}\overline{u_s}(x)\, dx \ = \lim_{s \rightarrow 1^-}\ \int_{\Om} \overline{u_s} (x) \cdot \overline{g_s}(x)\, dx.
	\end{equation}
	Now, since $\overline{u_s}$ converges strongly in $L^p(\Om; \R^{n})$ and $\overline{g_s}$ converges weakly in $L^p(\Om; \R^{n})$ we have that
	\begin{eqnarray}\label{eq: strongConvMinMul}
		\begin{aligned}
		& \lim_{s \rightarrow 1^-}\int_{\Om} \overline{u_s} (x) \cdot \overline{g_s}(x)\, dx=\int_{\Om} \overline{u}(x) \cdot \overline{g}(x)\,dx
			\\&=\ \int_{\Om}\mathbb{A}(x)|\grad \overline{u}(x)|^{p - 2}\grad \overline{u}(x) : \grad \overline{u}(x) dx,
		\end{aligned}
	\end{eqnarray}
	where the last equality is due to $\overline{u}$ being an optimal state for $\mathcal{W}^{\loc}_{\overline{g}}(\cdot)$.
	Since $\mathbb{A} \in L^{\infty}(\Om; \R^{n\times n})$, and $D^s_\de \bar{u}_s \rightharpoonup \nabla \bar{u}$ (Proposition \ref{Prop: CompactSVary}) we have that $\mathbb{A}D^s_\de\bar{u}_s \rightharpoonup \mathbb{A}\nabla \bar{u}$ in $L^p(\Om, \R^{n \times n})$. This weak convergence in combination with the convergence of the weighted norms from \eqref{eq: strongConvMinMul0} and \eqref{eq: strongConvMinMul} implies
    \begin{equation*}
       \lim_{s \to 1^-} \|D^s_\de \bar{u}_s -\nabla \bar{u}\|_{L^p_{\mathbb{A}}(\Om,\R^{n \times n})}=0,
    \end{equation*}
    where for $A\in L^p(\Om, \R^{n \times n})$, we have defined the weighted norm
    \begin{equation*}
         \|A\|_{L^p_{\mathbb{A}}(\Om,\R^{n \times n})}:=\int_{\Om}\mathbb{A}(x)|A(x)|^{p - 2}A(x) : A(x) dx.
    \end{equation*}
    Finally, since $\mathbb{A}$ is uniformly elliptic in $\Om$ (see \eqref{Eq: UnifEllip}), 
    \begin{equation*}
        \lim_{s \to 1^{-}} \|D^s_\de \bar{u}_s -\nabla \bar{u}\|_{L^p(\Om,\R^{n \times n})} \leq \lim_{s \to 1^{-}} \frac{1}{\mu}\|D^s_\de \bar{u}_s -\nabla \bar{u}\|_{L^p_{\mathbb{A}}(\Om,\R^{n \times n})}=0,
    \end{equation*}
    concluding the proof.
	%
\end{proof}
Given the lack of uniqueness of the non-convex state equation, the obstacle for obtaining a similar result lies in comparing the minimality of the limit pair $(\widehat{u},\widehat{g})$ against every pair in $\mathcal{K}^{\loc}$ since now there is no guarantee that every such pair can be obtained as a limit of solutions of the nonlocal problems.
While we cannot obtain a full convergence of minimizers result similar to Theorem \ref{convMinss->1}, we can obtain a weaker result that involves comparing a candidate limiting pair to some other locally admissible pair for each control in $Z_{\text{ad}}$.
\begin{corollary}[Relative minimization]\label{cor: compactnessRecminsQuasi}
	In the context of Theorem \ref{convMinss->1}, let $\{(\overline{u_s}, \overline{g_s})\}_{s < 1}$ be a family of solutions for Problem \ref{nlProbStatementG}. Then there exists a pair $(\overline{u}, \overline{g}) \in \mathcal{K}^{\loc}$ such that, up to a sub-sequence, $\overline{u_s} \rightarrow \overline{u}$ strongly in $L^p(\Om_{\de}; \R^n)$, $\overline{g_s} \rightharpoonup \overline{g}$ weakly in $L^{p'}(\Om; \R^n)$, and for every $f \in Z_{\text{ad}}$, there exists a $\hat{u} \in W^{1, p}(\Om_{-\de}; \R^n)$ so that $\hat{u}=0$ in $\Om_{\de}\backslash\Om_{-\de}$, $(\hat{u}, f) \in \mathcal{K}^{\loc}$ and
	\begin{equation}\label{lem: compactnessRecminsQuasiIneq}
		\mathcal{F}(\overline{u}, \overline{g}) \ \leq \ \mathcal{F}(\hat{u}, f).
	\end{equation}
\end{corollary}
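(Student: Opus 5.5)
The plan is to mirror the first half of the proof of Theorem \ref{convMinss->1}, but to replace the ``every local admissible pair is a limit of nonlocal admissible pairs'' step with a weaker statement that only produces \emph{some} locally admissible state for each fixed control.

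\textbf{Step 1: the candidate limit pair.} Apply Lemma \ref{compactnessSolns} to the family $\{(\overline{u_s},\overline{g_s})\}_{s<1}$. This gives a sequence $s_j\to1$ and a pair $(\overline{u},\overline{g})\in\mathcal{K}^{\loc}$ such that $\overline{u_{s_j}}\to\overline{u}$ strongly in $L^p(\Om_\de;\R^n)$ and $\overline{g_{s_j}}\rightharpoonup\overline{g}$ weakly in $L^{p'}(\Om;\R^n)$. The lower semicontinuity argument of \eqref{convMinss->1Eq3} then yields
\[
\mathcal{F}(\overline{u},\overline{g})\leq\liminf_{j\to\infty}\mathcal{F}(\overline{u_{s_j}},\overline{g_{s_j}}).
\]
Here the first term is handled by strong convergence together with the growth bound \ref{F3} (dominated convergence along a further a.e.-convergent subsequence), and the second by weak lower semicontinuity of the weighted $L^{p'}$ norm, which is convex.

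\textbf{Step 2: a comparison state for each fixed control.} Fix $f\in Z_{\text{ad}}$. For each $j$, existence of energy minimizers (quasiconvex case, as recalled after Proposition \ref{minimizationOfNLEnergy}) provides some $v_j\in H^{s_j,p,\de}_0(\Om_{-\de};\R^n)$ minimizing $\mathcal{W}^{\de,s_j}_f$, so that $(v_j,f)\in\mathcal{K}^{\de}_{s_j}$. Rerun Lemma \ref{compactnessSolns} with the constant control $f$. The bound from Lemma \ref{bddAdmissibleState}, the $s$-uniform Poincar\'e inequality of Proposition \ref{Prop: NLPoincare} a), and Proposition \ref{Prop: CompactSVary} give a further subsequence and some $\hat u\in W^{1,p}(\Om_{-\de};\R^n)$ with $\hat u=0$ in $\Om_\de\setminus\Om_{-\de}$ and $v_j\to\hat u$ in $L^p$. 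The $\Gamma$-convergence of Theorem \ref{Gas->1Theorem} together with equicoercivity gives that $\hat u$ minimizes $\mathcal{W}^{\loc}_f$, so $(\hat u,f)\in\mathcal{K}^{\loc}$. Because the $\Gamma$-limit is only with respect to the energies, $\hat u$ may depend on $f$ and on the chosen subsequence. This is exactly why the conclusion is only relative minimization: no uniqueness is available to identify $\hat u$ with an arbitrary prescribed member of $\mathcal{K}^{\loc}$.

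\textbf{Step 3: chain the inequalities.} By minimality of $(\overline{u_{s_j}},\overline{g_{s_j}})$ in $\mathcal{K}^{\de}_{s_j}$, we have $\mathcal{F}(\overline{u_{s_j}},\overline{g_{s_j}})\leq\mathcal{F}(v_j,f)$ for every $j$. Since $v_j\to\hat u$ strongly in $L^p$ and the control is fixed, dominated convergence via \ref{F3}, as in \eqref{convMinss->1Eq1}, gives $\mathcal{F}(v_j,f)\to\mathcal{F}(\hat u,f)$. Combining with Step 1 yields \eqref{lem: compactnessRecminsQuasiIneq}. Step 1 is done once, and then Step 2 is applied for each $f$ along sub-subsequences, which is harmless because Step 1's liminf only becomes larger on subsequences.

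\textbf{Main obstacle.} The delicate point is the passage to the limit in $\int_\Om F(x,\cdot)$. The growth in \ref{F3} is of order $p^*_s$, while Proposition \ref{Prop: CompactSVary} only gives strong $L^p$ convergence. I would first try to upgrade to strong $L^q$ convergence for $q<p^*$, using uniform Sobolev-type embeddings in $s$ near $1$, or else simply read \ref{F3} with exponent $p$ as the paper implicitly does in \eqref{convMinss->1Eq1}. The convexity and continuity of $F$ handle the lower-semicontinuity direction in any case.
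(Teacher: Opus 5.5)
Your proposal is correct and is essentially the paper's argument: extract $(\overline{u},\overline{g})\in\mathcal{K}^{\loc}$ via Lemma \ref{compactnessSolns}, produce for each $f\in Z_{\text{ad}}$ a locally admissible $\hat u$ as a limit of nonlocal energy minimizers with the frozen control $f$, and chain \eqref{convMinss->1Eq1}--\eqref{convMinss->1Eq3} without identifying $\hat u$ with a prescribed element of $\mathcal{K}^{\loc}$. Your proof is more careful than the paper's one-line proof in two places: you pass to sub-subsequences, whereas the paper writes \eqref{convMinss->1Eq1} as a full limit, which is not justified without uniqueness; and you flag the critical exponent in \ref{F3}, whereas the paper takes the dominated-convergence step for granted.
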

\begin{proof}
    Given the lack of uniqueness of the non-convex state equation we cannot identify (before \eqref{convMinss->1Eq1}) $\hat{v}$ with $v$ in the proof of Theorem \ref{convMinss->1}. Thus, combining \eqref{convMinss->1Eq1},\eqref{convMinss->1Eq2} and \eqref{convMinss->1Eq3}  leads to \eqref{lem: compactnessRecminsQuasiIneq}.
\end{proof}
In addition, just as in the setting of the convex control problems, we have the lim-inf inequality
\begin{equation*}\label{eq: liminfstillholds}
	\liminf_{s \rightarrow 1^-}\mathcal{F}(\overline{u_s}, \overline{g_s}) \ \geq \ \mathcal{F}(\overline{u}, \overline{g}).
\end{equation*}

Finally, we have that the convergence of controls can be improved to strong convergence. Namely, while the weak $L^{p'}(\Om; \R^n)$ topology is the natural topology to consider for the controls in the context of $\Ga$-convergence, it turns out that, using the structure of the cost functional, we may improve this convergence to strong convergence in any $L^r(\Om; \R^n)$ space (for $r \in [1, \infty)$). 
\begin{theorem}[Strong convergence of controls as $s \rightarrow 1^-$]\label{Th: StrConvControls} Let $W$ be an energy density verifying the conditions of Theorem \ref{Gas->1Theorem}. 
Assume $\{(\overline{u_{ s}}, \overline{g_{s}})\})_{s < 1}$ is a sequence of solution to Problem \ref{nlProbStatement} (respectively Problem \eqref{nlProbStatementG}), converging to $(\overline{u}, \overline{g})$, a solution of the respective local problem. 
    Then, as $s \rightarrow 1^-$, we have that $\overline{g_{s}} \rightarrow \overline{g}$ strongly in $L^r(\Om; \R^n)$ for any $r \in [1, \infty)$.
\end{theorem}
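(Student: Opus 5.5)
Our plan is the standard route: upgrade weak convergence to strong convergence through convergence of norms, using the structure of the cost functional, and then interpolate using the box constraint.

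\emph{Step 1: energy convergence.} From the proof of Theorem \ref{convMinss->1} (respectively, the lim-inf inequality after Corollary \ref{cor: compactnessRecminsQuasi}, together with the assumed convergence to a local solution) we have
\[
\lim_{s\to 1^-}\mathcal{F}(\overline{u_s},\overline{g_s})=\mathcal{F}(\overline{u},\overline{g}).
\]
Since $\overline{u_s}\to\overline{u}$ strongly in $L^p$, we also get $\int_\Om F(x,\overline{u_s})\,dx\to\int_\Om F(x,\overline{u})\,dx$. This uses continuity of $F$, the growth bound \ref{F3}, and the fact that the states are bounded in $H^{s,p,\de}$ uniformly (Lemma \ref{compactnessSolns}). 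The bound together with the compact embedding should give convergence in $L^{p^*_s}$-type norms; otherwise we apply generalized dominated convergence along subsequences. We expect this step to be the main obstacle, because the exponent $p^*_s$ varies with $s$. If it causes trouble, we will instead use only lower semicontinuity of the $F$-term, which is enough: $\limsup$ of the control term $\le$ total $\lim$ $-$ $\liminf$ of the $F$-term $\le$ $\int_\Om \La|\overline{g}|^{p'}$. Either way, subtracting gives
\[
\lim_{s\to 1^-}\int_\Om \La|\overline{g_s}|^{p'}\,dx=\int_\Om\La|\overline{g}|^{p'}\,dx .
\]

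\emph{Step 2: strong $L^{p'}$ convergence.} Since $\La\in L^\infty$ and $\La\ge\la>0$, the quantity $\|h\|_{\La}:=(\int_\Om\La|h|^{p'})^{1/p'}$ is a norm on $L^{p'}(\Om;\R^n)$ equivalent to the standard one. This norm is uniformly convex, being a weighted $L^{p'}$ norm with $1<p'<\infty$. Moreover, $\overline{g_s}\rightharpoonup\overline{g}$ weakly in $L^{p'}$ by Theorem \ref{convMinss->1} b) (respectively, Lemma \ref{compactnessSolns}), and the norms converge by Step 1. The Radon–Riesz (Kadec–Klee) property then gives $\overline{g_s}\to\overline{g}$ strongly in $L^{p'}$. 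This is why $\La\in L^\infty$ is needed, as remarked after Theorem \ref{Th: abstractMinimizationConvex}.

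\emph{Step 3: all $r\in[1,\infty)$.} The box constraint gives $\|\overline{g_s}\|_{L^\infty}\le M:=\|\mathfrak a\|_\infty+\|\mathfrak b\|_\infty$ uniformly, and the same bound holds for $\overline{g}$. For $r\le p'$ we use Hölder's inequality on the bounded domain $\Om$. For $r>p'$ we interpolate:
\[
\int_\Om|\overline{g_s}-\overline{g}|^r\,dx\le (2M)^{r-p'}\int_\Om|\overline{g_s}-\overline{g}|^{p'}\,dx\to 0 .
\]
If the convergence in Step 1 is only along subsequences, the limit is identified in each case: by uniqueness in the convex case, and by hypothesis in the non-convex case. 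Hence the whole family converges.
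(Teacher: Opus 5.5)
Your proposal is correct and follows the paper's proof essentially step for step: convergence of the optimal costs and of the $F$-term gives convergence of $\int_\Om \La|\overline{g_s}|^{p'}dx$; weak convergence plus convergence of this equivalent norm gives strong $L^{p'}$ convergence; Hölder's inequality and the box-constraint interpolation then cover all $r$. Your appeal to uniform convexity via Radon--Riesz is the precise justification of the step where the paper only cites reflexivity. The one soft spot, which the paper shares, is the non-convex case: the lim-inf inequality and the assumed convergence only give $\liminf_{s\to 1^-}\mathcal{F}(\overline{u_s},\overline{g_s})\ge\mathcal{F}(\overline{u},\overline{g})$, while the matching upper bound comes, in the convex case, from recovery pairs $(v_s,\overline{g})$ with $v_s\to\overline{u}$, i.e.\ from uniqueness of the local state, so without uniqueness the convergence of the optimal values is not supplied by the stated hypotheses and must effectively be assumed.
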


\begin{proof}
	The main step is proving that $\overline{g_{ s}} \rightarrow \overline{g}$ strongly in $L^{p'}(\Om; \R^n)$. We proved in the course of Theorem \ref{convMinss->1} that $\overline{u_{s}} \rightarrow \overline{u}$ strongly in $L^p(\Om; \R^n)$. Due to the growth bound \eqref{Gqcoercive} on $F$ and the Generalized Dominated Convergence Theorem, we have that
	\begin{equation}\label{Eq: StrConvControls1}
		\lim_{s \rightarrow 1^-}\int_{\Om}F(x, \overline{u_{ s}}(x))dx \ = \ \int_{\Om}F(x, \overline{u}(x))dx.
	\end{equation}
	The equation \eqref{Eq: StrConvControls1} along with \eqref{Eq: lims->1MinCon} tells us that
	\begin{equation}\label{Eq: StrConvControls2}
		\lim_{s \rightarrow 1^-}\int_{\Om}\La(x)|\overline{g_{ s}}(x)|^{p'}dx \ = \ \int_{\Om}\La(x)|\overline{g}(x)|^{p'}dx.
	\end{equation}
	Now notice that the weighted quantity $\|\cdot\|_{L^{p'}_{\La}(\Om; \R^n)}: L^{p'}(\Om; \R^n) \rightarrow [0, \infty)$
	\begin{equation*}\label{Eq: StrConvControls3}
		\|g\|_{L^{p'}_{\La}(\Om; \R^n)} \ := \ \left(\int_{\Om}\La(x)|g(x)|^{p'}dx\right)^{\frac{1}{p'}}
	\end{equation*}
	defines a norm on $L^{p'}(\Om; \R^n)$ that is equivalent to the standard $\|\cdot\|_{L^{p'}(\Om; \R^n)}$ norm, owing to $\La$ being positive and bounded. Thus \eqref{Eq: StrConvControls2} can be written as
	\begin{equation}\label{Eq: StrConvControls4}
		\lim_{s \rightarrow 1^-}\|\overline{g_{ s}}\|^{p'}_{L^{p'}_{\La}(\Om; \R^n)} \ = \ \|\overline{g}\|^{p'}_{L^{p'}_{\La}(\Om; \R^n)}.
	\end{equation}
	Now, since  $\overline{g_{s}} \rightharpoonup \overline{g}$ weakly in $L^{p'}(\Om; \R^n)$, and $\La \in L^{\infty}(\Om) \subset L^p(\Om)$, we have that  $\La\overline{g_{s}} \rightharpoonup \La\overline{g}$ weakly in $L^{p'}(\Om; \R^n)$ as well. Since $L^{p'}(\Om; \R^n)$ is reflexive, this observation combined with \eqref{Eq: StrConvControls4} tells us that
	\begin{equation*}\label{Eq: StrConvControls5}
		\lim_{s \rightarrow 1^-}\|\overline{g_{ s}} - \overline{g}\|_{L^{p'}_{\La}(\Om; \R^n)} \ = \ 0.
	\end{equation*}
	From this and Assumption \ref{Assump: cost} there exists $0<\la\leq \La(x)$ such that
	\begin{equation*}\label{Eq: StrConvControls6}
		\lim_{s \rightarrow 1^-}\|\overline{g_{ s}} - \overline{g}\|_{L^{p'}(\Om; \R^n)} \ \leq \ \lim_{s \to 1^-}\frac{1}{\la}\|\overline{g_{ s}} - \overline{g}\|_{L^{p'}_{\La}(\Om; \R^n)} \ = \ 0,
	\end{equation*}
	completing the proof that $\overline{g_{s}} \rightarrow \overline{g}$ strongly in $L^{p'}(\Om; \R^n)$. Now, this immediately implies $\overline{g_{ s}} \rightarrow \overline{g}$ strongly in $L^r(\Om; \R^n)$ for any $r \in [1, p')$ by Hölder's inequality. 
	
	Finally we demonstrate also that $\overline{g_{s}} \rightarrow \overline{g}$ strongly in $L^r(\Om; \R^n)$ for $r \in (p', \infty)$. Due to the construction of the set $Z_{\text{ad}}$ from \eqref{Eq: linearAdSet}, we have the estimate
	\begin{eqnarray*}\label{Eq: StrConvControls7}
		\begin{aligned}
			\|\overline{g_s} - \overline{g}\|^r_{L^r(\Om; \R^n)} \ &= \ \int_{\Om}|\overline{g_{s}}(x) - \overline{g}(x)|^{r - p'}|\overline{g_{s}}(x) - \overline{g}(x)|^{p'}dx \\
			&\leq \ \max\{\|a\|_{L^{\infty}(\Om; \R^n)}, \|b\|_{L^{\infty}(\Om; \R^n)}\}^{r - p'}\|\overline{g_{s}} - \overline{g}\|^{p'}_{L^{p'}(\Om; \R^n)} \rightarrow 0,
		\end{aligned}
	\end{eqnarray*}
	which completes the proof for all $r \in [1, \infty)$.

\end{proof}

\begin{remark}\label{Rmk: roleOfPenaltyTerm}
	Another important matter in control problems, that we do not dwell on in the present paper, is the role of the penalty term for regularizing the optimal control problems. Another advantage of having the penalty term is the ability to carry out this very proof.
\end{remark}

\begin{remark}\label{Rmk: GammaConvergenceCostFunctionals}
	In effect, Theorem \ref{convMinss->1} is proven by establishing the $\Ga$-convergence of the sequence of nonlocal reduced cost functionals, defined by \eqref{NLReducedCostFunctional} for each $\de > 0$ and $s \in (0, 1)$ in the $L^{p'}(\Om; \R^n)_{\om}$ topology, as $s \rightarrow 1^-$, to the local reduced cost functional 
	\begin{equation*}\label{Eq: lReducedCost}
		j^{\loc}(g) \ := \ \int_{\Om}F(x, S^{\loc}g(x))dx + \int_{\Om}\La(x)|g(x)|^{p'}dx.
	\end{equation*}
	Here $S^{\loc}: L^{p'}(\Om; \R^n) \rightarrow L^p(\Om; \R^n)$ is the local solution mapping induced by the well-posedness of the local state equation \eqref{stateEqnWkFormL}. However, thanks to Theorem \ref{Th: StrConvControls}, we actually strengthen this $\Ga$-convergence to hold on the strong $L^r(\Om; \R^n)$ topology, for any $r \in [1, \infty]$. 
    
\end{remark}


\subsection{Convergence results as $\de \rightarrow 0^+$}\label{subsec: de->0pLap}

Now we state the analogous convergence results when $s \in (0, 1)$ is fixed and $\de \rightarrow 0^+$. These results mirror those presented in Section \ref{subsec: s->1pLap}. Without loss of generality, we consider the initial $\de_0$ as $\de_0=1$. Therefore, for every $\de \in (0,1)$, $\Om_{\de}\subset \Om_1:=\Om+B(0,1)$. Now, we specify the next two extra hypothesis, the first one of which concerns the kernel and is needed to invoke case \emph{b)} of the Nonlocal Poincaré Inequality of Proposition \ref{Prop: NLPoincare}, whereas the second one stands for the domain of definition of the corresponding local problem.
\begin{itemize}
      \item Let $\rho_\de^s(x)=\de^{-n}\rho^s_1 \left(\frac{x}{\de}\right)$, with $\int_{B(0,1)}\rho^s_1(x)\, dx=n$.
      \item Let $\tilde{\Om}=\Om$.
\end{itemize}


\subsubsection{$\Gamma$-convergence of the energies}

In this section we address the $\Gamma$-converge of the energies, which will imply, in turn, that the limit of solutions of nonlocal problems are admissible solutions of the local one. First, we recall a special case of \cite[Lemma 3.1]{cueto2025gamma}, which provides the strong convergence of nonlocal gradients when $\de$ vanishes.

\begin{lemma} \label{le: localication of the nl gradient when de goes to 0}
    For each $u \in W^{1,p}(\Om; \R^n)$, it holds that $\chi_{\Om_{-\de}}D^s_\de u \to \nabla u$ in $L^p(\Om;\R^{n \times n})$ as $\de \to 0$, where $\chi_{\Om_{-\de}}$ stands for the characteristic function. 
\end{lemma}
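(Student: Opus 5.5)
The plan is a density argument: prove the convergence first for smooth compactly supported functions, then extend to all of $W^{1,p}(\Om;\R^n)$ using an operator bound that is uniform in $\de$. Throughout we use the rescaled kernels $\rho^s_\de(x)=\de^{-n}\rho^s_1(x/\de)$ with $\int_{B(0,1)}\rho^s_1=n$. By definition of $\Om_{-\de}$, for $x\in\Om_{-\de}$ the ball $B(x,\de)$ lies in $\Om$. So the values of $u$ in $\Om$ are enough to define $D^s_\de u(x)$, and the cut-off $\chi_{\Om_{-\de}}$ exactly removes the points where $D^s_\de u$ would need values outside $\Om$.

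\textbf{Step 1 (smooth functions).} Take $\varphi\in C^\infty_c(\R^n;\R^n)$. Write
\[
\varphi(x)-\varphi(y)=\nabla\varphi(x)(x-y)+R(x,y), \qquad |R(x,y)|\le \tfrac12\|\nabla^2\varphi\|_\infty|x-y|^2 .
\]
Inserting this into the definition of $D^s_\de$, the linear term gives
\[
\nabla\varphi(x)\int_{B(0,\de)}\frac{h\otimes h}{|h|^2}\rho^s_\de(h)\,dh .
\]
The kernel is radial, so this integral equals $\frac1n\int\rho^s_\de\,I=I$, using $\int\rho^s_\de=\int\rho^s_1=n$. The linear term is therefore exactly $\nabla\varphi(x)$. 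The remainder is bounded by
\[
C\|\nabla^2\varphi\|_\infty\int_{B(0,\de)}|h|\rho^s_\de(h)\,dh = C\de\int_{B(0,1)}|z|\rho^s_1(z)\,dz=O(\de).
\]
Hence $D^s_\de\varphi\to\nabla\varphi$ uniformly. Since $|\Om\setminus\Om_{-\de}|\to0$, it follows that $\chi_{\Om_{-\de}}D^s_\de\varphi\to\nabla\varphi$ in $L^p(\Om)$.

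\textbf{Step 2 (uniform bound).} The goal is to show that $\|\chi_{\Om_{-\de}}D^s_\de u\|_{L^p(\Om)}\le C\|\nabla u\|_{L^p(\Om)}$ with $C$ independent of $\de$. For $x\in\Om_{-\de}$ and $y\in B(x,\de)\subset\Om$, write
\[
u(x)-u(y)=\int_0^1\nabla u\bigl(x+t(y-x)\bigr)(x-y)\,dt ,
\]
first for smooth $u$ and then by approximation. This gives
\[
|D^s_\de u(x)|\le\int_0^1\!\!\int_{B(0,\de)}\rho^s_\de(h)\,\bigl|\nabla u(x-th)\bigr|\,dh\,dt .
\]
Minkowski's integral inequality then yields the bound with $C=\int\rho^s_\de=n$. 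The only point needing care is that the segments between $x$ and $y$ stay inside $\Om$, and this holds because $B(x,\de)$ is convex and contained in $\Om$.

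\textbf{Step 3 (density).} One would like smooth functions approximating $u$ in $W^{1,p}(\Om)$, but $\Om$ is only open and bounded, so global approximation by $C^\infty_c(\R^n)$ functions may fail. To avoid this, use local approximation: fix a compact $K\Subset\Om$ and first estimate on $\Om_{-\eta}$ for a fixed $\eta>0$. For $\de<\eta/2$, use mollifications $u_\ep=u*\eta_\ep$. These converge to $u$ in $W^{1,p}(\Om_{-\eta/2})$ and serve as smooth approximants there, so Steps 1--2 apply on $\Om_{-\eta}$ and give
\[
\limsup_{\de\to0}\|\chi_{\Om_{-\de}}D^s_\de u-\nabla u\|_{L^p(\Om_{-\eta})}\le (C+1)\|\nabla u_\ep-\nabla u\|_{L^p(\Om_{-\eta/2})}\to0 .
\]
On $\Om\setminus\Om_{-\eta}$, Step 2 applied with the $\Om$ domain of integration gives the bound $\|\chi_{\Om_{-\de}}D^s_\de u\|_{L^p(\Om\setminus\Om_{-\eta})}\le n\|\nabla u\|_{L^p(\Om\setminus\Om_{-2\eta})}$, using that the points $x-th$ lie within $\de$ of $x$. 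The term $\|\nabla u\|_{L^p(\Om\setminus\Om_{-\eta})}$ is also small as $\eta\to0$ by absolute continuity of the integral.

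The main obstacle is this boundary-layer bookkeeping. It is exactly why the cut-off $\chi_{\Om_{-\de}}$ appears in the statement, and why no regularity of $\partial\Om$ is needed.
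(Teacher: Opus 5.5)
Your proof is correct, but it takes a different route from the one the paper relies on. The paper gives no argument of its own; it imports the lemma from \cite[Lemma 3.1]{cueto2025gamma}. In that framework the natural proof goes through the nonlocal fundamental theorem of calculus of \cite{bellido2023non, bellido2025nonlocal}. Integrating by parts in $h$ gives the exact identity $D^s_\de u = Q^s_\de * \grad u$ on $\Om_{-\de}$, where $Q^s_\de(x)=\int_{|x|}^{\infty}\bar\rho^s_\de(r)\,r^{-1}\,dr=\de^{-n}Q^s_1(x/\de)$. This kernel is nonnegative, supported in $B(0,\de)$, and satisfies $\int Q^s_1=\frac1n\int\rho^s_1=1$. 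The lemma then reduces to the classical approximate-identity theorem applied to $\grad u$ extended by zero, plus $|\Om\setminus\Om_{-\de}|\to 0$. That route needs no Taylor step, no density argument in $W^{1,p}$ and no $\eta$-layer, and its uniform bound has constant $1$.

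Your argument is self-contained and avoids the fundamental theorem of calculus. It uses only $\rho^s_1\ge 0$, $\rho^s_1\in L^1$ and the moment identity $\int\frac{h\otimes h}{|h|^2}\rho^s_1(h)\,dh=I$. In fact your Step 2 is a one-sided version of that identity. The segment formula gives, pointwise on $\Om_{-\de}$, $|D^s_\de u|\le \widetilde Q_\de*|\grad u|$ with $\widetilde Q_\de(k)=\int_0^1 t^{-n}\rho^s_\de(k/t)\,dt$, a kernel of mass $n$. Because this is only an inequality, the cancellation in the tensor product is lost. That is exactly why you need Step 1 and the localized density argument of Step 3 to identify the limit.

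A few minor clean-ups:
\begin{itemize}
\item The compact set $K\Subset\Om$ is never used.
\item The symbol $\eta$ denotes both the layer width and the mollifier; rename one of them.
\item Step 1 must be applied in its local form to $u_\ep$, which is smooth only on $\Om_{-\ep}$. This is fine, since only $\grad^2 u_\ep$ on $B(x,\de)\subset\Om_{-\eta/2}$ enters, and it is bounded there.
\item On a nonsmooth $\Om$, the ``by approximation'' step in Step 2 should use Meyers--Serrin approximants in $C^\infty(\Om)\cap W^{1,p}(\Om)$.
\item The same Step 2 bound shows that the integral defining $D^s_\de u(x)$ converges absolutely for a.e.\ $x\in\Om_{-\de}$. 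This is what gives the statement meaning for $u$ merely in $W^{1,p}(\Om;\R^n)$, and it is worth saying explicitly.
\end{itemize}
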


This convergence of nonlocal gradients led to a compactness result for sequences of functions with vanishing horizon, \cite[Lemma 3.6]{cueto2025gamma}, which we recall here.
\begin{proposition}[Convergent sub-sequences for vanishing horizon]\label{le:compactness} 
	Let $\{\de_j\}_{j\in \mathbb{N}} \subset (0,1]$ be a sequence with $\de_j \to 0$ and suppose that $u_j \in H^{s,p,\de_j}_0(\Omega_{-\de_j}; \R^n)$ for each $j \in \N$ with
	\[
	\sup_{j \in \N} \|D^s_{\de_j} u_j\|_{L^p({\Omega};\R^{n \times n})} < \infty.
	\]
	Then, there is a $u \in W^{1,p}_0(\Omega; \R^n)$ (extended outside of $\Om$ as zero) such that, up to a (not relabeled) subsequence, 
	\[
	u_j \to u \ \  \text{in $L^p(\R^n; \R^n)$} \quad \text{and} \quad D^s_{\de_j}u_j \rightharpoonup \nabla u \ \  \text{in $L^p(\R^n;\R^{n \times n})$ as $j \to \infty$.}
	\]
\end{proposition}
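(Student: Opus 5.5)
The plan is to follow the strategy of the $s\to1^-$ compactness result (Proposition \ref{Prop: CompactSVary}), now with $s$ fixed and the horizon shrinking. Extend each $u_j$ by zero outside $\Om_{-\de_j}$, so every $u_j$ is supported in $\overline{\Om}$. The first step is a uniform $L^p$ bound. Since the kernels are rescalings of $\rho^s_1$, case \emph{b)} of the Nonlocal Poincar\'e inequality (Proposition \ref{Prop: NLPoincare}) gives a constant independent of $\de_j$. Hence $\sup_j\|u_j\|_{L^p}<\infty$, and by reflexivity we may assume $u_j\rightharpoonup u$ in $L^p(\R^n;\R^n)$ with $u=0$ outside $\Om$. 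We may also assume $D^s_{\de_j}u_j\rightharpoonup G$ in $L^p(\R^n;\R^{n\times n})$; the nonlocal gradient vanishes outside $\Om_{\de_j}$, so $G$ is supported in $\overline{\Om}$.

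The second step identifies $G=\nabla u$ by duality. Fix $\phi\in C^\infty_c(\R^n;\R^{n\times n})$. The nonlocal integration by parts (valid by density on $H^{s,p,\de}$) gives
\[
\int D^s_{\de_j}u_j:\phi = -\int u_j\cdot \dive^s_{\de_j}\phi .
\]
By the analogue of Lemma \ref{le: localication of the nl gradient when de goes to 0} for the divergence of smooth fields, $\dive^s_{\de_j}\phi\to\dive\phi$ strongly, indeed uniformly. This uses the normalization $\int_{B(0,1)}\rho^s_1=n$ together with the Taylor expansion $\phi(y)-\phi(x)=\nabla\phi(x)(y-x)+O(|x-y|^2)$. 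Passing to the limit yields $\int G:\phi=-\int u\cdot\dive\phi$. Thus $\nabla u=G\in L^p$, so $u\in W^{1,p}(\R^n)$ vanishes off $\Om$ and therefore $u\in W^{1,p}_0(\Om)$. (If $\pa\Om$ is irregular, this is taken as the definition of $W^{1,p}_0$ in the sense of zero extension.)

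The third step, and the main obstacle, is upgrading to strong $L^p$ convergence. With $s$ fixed there is no uniform fractional gain, because the kernels concentrate as $\de_j\to0$. I would first try a Fourier/convolution representation. In the spirit of \cite{bellido2023non}, one writes $D^s_{\de}u=\nabla(Q_\de * u)$ for a kernel $Q_\de(x)=\de^{-n}Q_1(x/\de)$ with $Q_1\in L^1$ and nonnegative mass. Equivalently, $u=V_\de * D^s_\de u$ for suitable $V_\de$ with $\widehat{V_\de}(\xi)$ behaving like $i\xi/|\xi|^2$ at low frequencies. Strong compactness then follows by applying the Fr\'echet--Kolmogorov criterion to $\{u_j\}$. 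The required estimate is
\[
\|u_j(\cdot+h)-u_j\|_{L^p}\le \omega(|h|)+ \omega'(\de_j),
\]
with moduli of continuity $\omega,\omega'\to0$, obtained by splitting $V_{\de_j}$ into a fixed singular Riesz-potential part (which gives equicontinuity) and a remainder controlled by $\de_j$ times $\|D^s_{\de_j}u_j\|_{L^p}$. Equi-tightness is automatic from the common bounded support. If this splitting is too delicate, the fallback is a translation argument: compare $u_j$ with the mollification $Q_{\de_j}*u_j$, whose classical gradient is bounded in $L^p$ and which is therefore precompact by Rellich, and show $\|u_j-Q_{\de_j}*u_j\|_{L^p}\to0$ via the Fourier multiplier estimate. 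Either route identifies the strong limit with the weak limit $u$, which completes the proof.
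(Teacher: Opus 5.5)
The paper gives no proof of this statement; it quotes it from \cite[Lemma 3.6]{cueto2025gamma}. Your outline is a correct strategy, built on the same translation mechanism $D^s_\de u=\nabla(Q_\de*u)$ that the cited work uses. The one substantive step you leave unproved is the estimate on $u_j-Q_{\de_j}*u_j$.

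Your two routes for the third step are in fact the same estimate. Apply the classical representation formula $v=V_0*\nabla v$, with $V_0(x)=c_n\,x/|x|^n$, to $v=Q_\de*u$. This gives $Q_\de*u=V_0*D^s_\de u$, and hence
\[
u-Q_\de*u=(V_\de-V_0)*D^s_\de u,\qquad \|V_\de-V_0\|_{L^1(\R^n)}=\de\,\|V_1-V_0\|_{L^1(\R^n)},
\]
using $V_\de(x)=\de^{1-n}V_1(x/\de)$ and the $(1-n)$-homogeneity of $V_0$.

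So everything reduces to showing $V_1-V_0\in L^1(\R^n)$. Equivalently, the symbol $\frac{\xi}{|\xi|^2}\,\frac{1-\widehat{Q_1}(\xi)}{\widehat{Q_1}(\xi)}$ must be a Mikhlin multiplier; its dilation invariance then produces the factor $\de$. This holds because the symbol is smooth on all of $\R^n$ and of order $-s$ at infinity, which rests on three facts:
\begin{enumerate}
\item $\widehat{Q_1}>0$, which is already needed for $V_1$ to exist (see \cite{bellido2023non}).
\item $\widehat{Q_1}(\xi)\asymp|\xi|^{s-1}$ as $|\xi|\to\infty$, with the matching bounds on derivatives. This comes from the Riesz-type singularity $Q_1(x)\sim c|x|^{1-s-n}$ at the origin.
\item $1-\widehat{Q_1}(\xi)=O(|\xi|^2)$ as $\xi\to0$. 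This follows from the radial symmetry and compact support of $Q_1$, together with $\int Q_1=\frac1n\int\rho^s_1=1$.
\end{enumerate}
The third fact is where the normalization enters your third step. So what you need is unit mass of $Q_1$, not merely nonnegative mass.

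Two smaller remarks. First, once $v_j:=Q_{\de_j}*u_j\to u$ in $L^p$, the identity $\nabla v_j=D^s_{\de_j}u_j$ already forces $D^s_{\de_j}u_j\rightharpoonup\nabla u$. Your duality step is therefore correct but redundant.

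Second, your caveat about irregular $\pa\Om$ can be dropped. Since $\supp\bar w_1$ is a compact subset of $[0,1)$, $Q_1$ vanishes outside some ball $B(0,1-\eta)$. Hence each $v_j$ is supported in $\{x\in\Om:\dist(x,\pa\Om)\ge\eta\de_j\}$, which is compact in $\Om$. So $v_j\in W^{1,p}_0(\Om)$, and weak closedness of $W^{1,p}_0(\Om)$ gives $u\in W^{1,p}_0(\Om)$ with no boundary regularity assumed.
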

This $\Ga$- convergence result is derived from \cite[Theorem 3.7]{cueto2025gamma}: \cite[Remark 3.8 and Example 3.9]{cueto2025gamma} show that it applies to our kernel, with the corresponding Poincaré inequality and compactness result (\cite[Lemma 3.6]{cueto2025gamma}) for varying $\de$. Arguing similarly to the $\Ga$-convergence results on $s$ from Section \ref{subsec: s->1pLap}, references such as \cite[Theorem 8.11]{dacorogna2007direct} can be employed to ensure that the result holds despite including $u$-dependence, as it does not play a role in the proof.

\begin{theorem}[$\Ga$-convergence as $\de \rightarrow 0^+$]\label{Gade->0: GaConv}
	Let $g \in Z_{\text{ad}}$ be fixed and $W$ be a quasiconvex energy density verifying the hypothesis of Subsection \ref{Sec: nonconvex} and, in particular \ref{H2}. Then, the family of functionals $\{\mathcal{W}^{\de, s}_g\}_{s < 1}$ will $\Ga$-converge in the strong $L^p(\Om; \R^n)$ topology to $\mathcal{W}^{\loc}_g$, which we will denote $\mathcal{W}^{\de, s}_g \xrightarrow{\Ga, \ \de \rightarrow 0^+} \mathcal{W}^{\loc}_g$. In other words, we have the following:
	\begin{enumerate}[label=\textbf{GCs\arabic*}]
		\item\label{GCd1} If $\{u_{\de}\}_{\de > 0} \subset L^p(\Om; \R^n)$ is a sequence such that $u_{\de} \rightarrow u$ strongly in $L^p(\Om; \R^n)$, then we have the \textbf{lim-inf inequality}
		\begin{equation*}\label{Gade->0Liminf}
			\mathcal{W}^{\loc}_g(u) \ \leq \ \liminf_{\de \rightarrow 0^+}\mathcal{W}^{\de, s}_g(u_{\de}).
		\end{equation*}
		\item\label{GCd2} If $u \in L^p(\Om; \R^n)$, then there exists a \textbf{recovery sequence} of $\{u_{\de}\}_{\de > 0} \subset L^p(\Om; \R^n)$ such that $u_{\de} \rightarrow u$ strongly in $L^p(\Om; \R^n)$ and
		\begin{equation*}\label{Gade->0Limsup}
			\mathcal{W}^{\loc}_g(u) \ \geq \ \limsup_{\de \rightarrow 0^+} \mathcal{W}^{\de, s}_g(u_{\de}).
		\end{equation*}
	\end{enumerate}
\end{theorem}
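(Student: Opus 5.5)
The plan is to reduce Theorem \ref{Gade->0: GaConv} to the $\Ga$-convergence result \cite[Theorem 3.7]{cueto2025gamma} for the unperturbed energies $\int_{\Om} W(x,u,D^s_\de u)\,dx$. The linear term $-\lang g,u\rang$ is then handled as a continuous perturbation. Since $g\in Z_{\text{ad}}\subset L^{p'}(\Om;\R^n)$ is fixed, the map $u\mapsto \lang g,u\rang$ is continuous in the strong $L^p(\Om;\R^n)$ topology. $\Ga$-convergence is stable under adding a continuous functional. So it suffices to prove \ref{GCd1} and \ref{GCd2} for $g=0$, with the $u$-dependence of $W$ accommodated as in the $s\to1^-$ case.

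For the lim-inf inequality \ref{GCd1}, I take $u_\de\to u$ in $L^p$ and assume without loss of generality that $\liminf_{\de\to0^+}\mathcal{W}^{\de,s}_0(u_\de)<\infty$. I pass to a subsequence $\de_j$ realizing the liminf with uniformly bounded energies. The lower bound in \ref{H2} gives
\[
c\|D^s_{\de_j}u_{\de_j}\|^p_{L^p(\Om;\R^{n\times n})}\le \mathcal{W}^{\de_j,s}_0(u_{\de_j})+c_0|\Om|,
\]
so the nonlocal gradients are bounded in $L^p$. Proposition \ref{le:compactness} then yields, for a further subsequence, $u\in W^{1,p}_0(\Om;\R^n)$ and $D^s_{\de_j}u_{\de_j}\rightharpoonup\grad u$ weakly in $L^p$, with $u_{\de_j}\to u$ strongly. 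The remaining task is
\[
\int_\Om W(x,u,\grad u)\,dx\le\liminf_j\int_\Om W(x,u_{\de_j},D^s_{\de_j}u_{\de_j})\,dx .
\]
This is the main obstacle: the liminf is taken along nonlocal gradients rather than gradients of the same functions, so classical weak lower semicontinuity does not apply directly. I would follow \cite[Theorem 3.7]{cueto2025gamma}, which translates nonlocal gradients into classical gradients of auxiliary functions $v_j$ with $\grad v_j = D^s_{\de_j}u_{\de_j}$ up to errors vanishing in $L^p$, and $v_j\to u$ in $L^p$. Quasiconvexity with $p$-growth, through the Acerbi--Fusco type result \cite[Theorem 8.11]{dacorogna2007direct} which allows the $u$-dependence thanks to the strong convergence of $u_{\de_j}$, then gives the inequality.

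For the recovery sequence \ref{GCd2}, I may assume $u\in W^{1,p}_0(\Om;\R^n)$, extended by zero; otherwise $\mathcal{W}^{\loc}_0(u)=\infty$ and the inequality is trivial. A constant sequence is not admissible, since the nonlocal states must vanish in $\Om_\de\setminus\Om_{-\de}$. I would first approximate $u$ by $C^\infty_c(\Om)$ functions: the upper bound in \ref{H2} together with the Carath\'eodory property makes $\mathcal{W}^{\loc}_0$ strongly continuous on $W^{1,p}$. For $\varphi\in C^\infty_c(\Om)$ with $\supp\varphi\subset\Om_{-\de}$ for small $\de$, I take the constant sequence $u_\de=\varphi$. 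Lemma \ref{le: localication of the nl gradient when de goes to 0} gives $D^s_\de\varphi\to\grad\varphi$ in $L^p$, and the growth bound in \ref{H2} plus generalized dominated convergence gives $\mathcal{W}^{\de,s}_0(\varphi)\to\mathcal{W}^{\loc}_0(\varphi)$. A diagonal argument in the approximation index and $\de$ then yields a recovery sequence for $u$. Finally, I add back $-\lang g,\cdot\rang$ to conclude.
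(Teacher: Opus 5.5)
Your proposal is correct and follows essentially the same route as the paper. The paper obtains the result directly from \cite[Theorem 3.7]{cueto2025gamma}, using the $\de$-uniform Poincar\'e inequality and compactness Lemma 3.6 there for the rescaled kernel, and appeals to \cite[Theorem 8.11]{dacorogna2007direct} to absorb the $u$-dependence. Your additional steps (treating $-\lang g,u\rang$ as a continuous perturbation, the lim-inf via the nonlocal-to-local translation, and the density-plus-diagonal recovery sequence) just unpack what that citation does. Two minor remarks: the translation is exact, since $D^s_\de u$ is the classical gradient of a convolution of $u$, so no error terms arise; and the lim-inf with $u$-dependence needs a strong-times-weak version of the Acerbi--Fusco lower semicontinuity theorem, the same point the paper passes over.
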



\subsubsection{Convergence of states and controls}

Before addressing the convergence of states we need the following compactness result. In particular, it states that the limit of solutions of the nonlocal optimal control problem return a solution to the local state equation. We provide the proof despite its similarity to that of Lemma \ref{compactnessSolns} due to the presence of a vanishing nonlocal boundary.

\begin{lemma}[Compactness of controls and states, $\de \rightarrow 0^+$]\label{De->0: compactnessSolns}
	Let $W$ be an energy density verifying the conditions of Theorem \ref{Gade->0: GaConv}  and $\{(\overline{u_{\de}}, \overline{g_{\de}})\}_{\de > 0}$ denote a family of solutions to Problem \ref{nlProbStatementG}. Then, there exists a sequence $\{\de_j\}^{\infty}_{j = 1}$ such that $\de_j \rightarrow 0$, and a pair $(\widehat{u}, \widehat{g}) \in \Alg^{\loc}$ such that $\overline{u_{\de_j}} \rightarrow \widehat{u}$ strongly in $L^p(\Om; \R^n)$ and $\overline{g_{\de_j}} \rightharpoonup \widehat{g}$ weakly in $L^{p'}(\Om; \R^n)$, as $\de \rightarrow 0^+$.
\end{lemma}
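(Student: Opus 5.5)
The argument mirrors the proof of Lemma \ref{compactnessSolns}, with the $s$-uniform tools replaced by their $\de$-uniform counterparts. The vanishing collar $\Om_{\de}\setminus\Om_{-\de}$ has to be handled through Proposition \ref{le:compactness}.

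\emph{Controls.} Since $\{\overline{g_{\de}}\}_{\de>0}\subset Z_{\text{ad}}$, and $Z_{\text{ad}}$ is bounded, convex and closed (hence weakly closed) in the reflexive space $L^{p'}(\Om;\R^n)$, we can extract a sequence $\de_j\to 0$ and some $\widehat{g}\in Z_{\text{ad}}$ with $\overline{g_{\de_j}}\rightharpoonup \widehat{g}$ weakly in $L^{p'}(\Om;\R^n)$.

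\emph{Uniform bound on the states.} We repeat the proof of Lemma \ref{bddAdmissibleState}, testing the minimality of $\overline{u_{\de_j}}$ for $\mathcal{W}^{\de_j,s}_{\overline{g_{\de_j}}}$ against $v=0$. Combined with the lower bound in \ref{H2} and Hölder's inequality, this gives
\[
c\|D^s_{\de_j}\overline{u_{\de_j}}\|^p_{L^p(\Om;\R^{n\times n})}\le \|\overline{g_{\de_j}}\|_{L^{p'}}\|\overline{u_{\de_j}}\|_{L^p}+c_0|\Om|+|\mathcal{W}^{\de_j,s}_0(0)|.
\]
By the upper bound in \ref{H2}, the term $|\mathcal{W}^{\de_j,s}_0(0)|$ is bounded by $\int_\Om C_2(1+|0|^p)\,dx$ uniformly in $j$; this holds trivially because $D^s_{\de}0=0$. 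We then apply Proposition \ref{Prop: NLPoincare} b). Its constant is independent of $\de$ thanks to the rescaling assumption $\rho^s_\de=\de^{-n}\rho^s_1(\cdot/\de)$, and this is the key point where the $\de$-uniformity enters. With the uniform $L^{p'}$ bound on $Z_{\text{ad}}$, we get
\[
\|D^s_{\de_j}\overline{u_{\de_j}}\|^p_{L^p}\le C\|D^s_{\de_j}\overline{u_{\de_j}}\|_{L^p}+\bar C,
\]
and since $p>1$ the seminorms are bounded uniformly in $j$.

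\emph{Compactness and the limit.} Proposition \ref{le:compactness} then yields, along a further subsequence, some $\widehat{u}\in W^{1,p}_0(\Om;\R^n)$ with $\overline{u_{\de_j}}\to\widehat{u}$ in $L^p(\R^n;\R^n)$, hence in $L^p(\Om;\R^n)$, and $D^s_{\de_j}\overline{u_{\de_j}}\rightharpoonup\nabla\widehat{u}$. This is exactly where the moving boundary is absorbed: the limit lies in $W^{1,p}_0(\tilde\Om)$ with $\tilde\Om=\Om$.

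\emph{Admissibility of the limit (main obstacle).} It remains to check that $\widehat{u}$ minimizes $\mathcal{W}^{\loc}_{\widehat{g}}$, which is the standard fundamental theorem of $\Ga$-convergence applied to a perturbed family. Fix any $v\in W^{1,p}_0(\Om;\R^n)$. Theorem \ref{Gade->0: GaConv} for $g=0$ gives a recovery sequence $v_j\to v$ in $L^p$ with $\limsup_j\mathcal{W}^{\de_j,s}_0(v_j)\le\mathcal{W}^{\loc}_0(v)$. I would take this recovery sequence inside $H^{s,p,\de_j}_0(\Om_{-\de_j};\R^n)$; if the construction of \cite{cueto2025gamma} does not guarantee that directly, I would first approximate $v$ by $C_c^\infty(\Om)$ functions, using the strong convergence of Lemma \ref{le: localication of the nl gradient when de goes to 0}, the growth bound in \ref{H2}, and a diagonal argument.

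\emph{The linear term.} The term $\lang g,u\rang$ passes to the limit along pairs of the form (weakly convergent $g$, strongly convergent $u$). Hence $\lang\overline{g_{\de_j}},v_j\rang\to\lang\widehat g,v\rang$ and $\lang\overline{g_{\de_j}},\overline{u_{\de_j}}\rangle\to\lang\widehat g,\widehat u\rangle$. Putting these together,
\[
\mathcal{W}^{\loc}_{\widehat g}(\widehat u)\le\liminf_j\mathcal{W}^{\de_j,s}_{\overline{g_{\de_j}}}(\overline{u_{\de_j}})\le\limsup_j\mathcal{W}^{\de_j,s}_{\overline{g_{\de_j}}}(v_j)\le\mathcal{W}^{\loc}_{\widehat g}(v).
\]
Here the first inequality uses the lim-inf inequality \ref{GCd1}, and the second uses the minimality of $\overline{u_{\de_j}}$. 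So $(\widehat u,\widehat g)$ is admissible for the local problem: it lies in $\mathcal{K}^{\loc}$, and in the $p$-Laplacian case it also lies in $\Alg^{\loc}$, since there the minimizer is the weak solution.
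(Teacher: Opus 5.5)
Your proposal is correct and follows the paper's proof step by step: weak compactness of the controls in $Z_{\text{ad}}$; the energy bound from testing against $v=0$, combined with the $\de$-uniform Poincar\'e inequality of Proposition \ref{Prop: NLPoincare} b); Proposition \ref{le:compactness} for the states; and $\Gamma$-convergence for admissibility of the limit. Your last step is more careful than the paper's appeal to ``continuous perturbations'' — you pass to the limit in the $j$-dependent linear terms by weak--strong pairing, you ensure the recovery sequence lies in $H^{s,p,\de_j}_0(\Om_{-\de_j};\R^n)$, and you note that $\mathcal{W}^{\de_j,s}_0(0)=\int_\Om W(x,0,0)\,dx$ does not depend on $j$ — and you correctly conclude membership in $\mathcal{K}^{\loc}$, which is what the paper's proof actually establishes despite the $\Alg^{\loc}$ in the statement.
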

\begin{proof}
	Since $\{\overline{g_{\de}}\}_{{\de}>0}$ belongs to $Z_{\text{ad}}$, a bounded and weakly closed subset of $L^{p'}(\Om; \R^n)$, existence of a function $\widehat{g} \in Z_{\text{ad}}$ such that $\overline{g_{\de}} \rightharpoonup \widehat{g}$ weakly in $L^{p'}(\Om; \R^n)$ immediately follows from the reflexivity of $L^{p'}(\Om; \R^n)$ and weak closedness of $Z_{\text{ad}}$.
    
	As for the states, we repeat the steps of the proof of  Lemma \ref{bddAdmissibleState} to obtain
    \begin{equation*}
\|D^{s}_{\de_j}\overline{u_{\de_j}}\|^p_{L^p(\Om; \R^{n \times n})} \ \leq \ \|\overline{g_{\de_j}}\|_{L^{p'}(\Om; \R^n)}\|\overline{u_{\de_j}}\|_{L^p(\Om; \R^n)} +\|a\|_{L^1(\Om)} +\left| W_0^{\de_j,s}(0)\right|.
       \end{equation*}    
       
       By \emph{\ref{H2}}, Theorem \ref{Gade->0: GaConv} and the Dominated Convergence Theorem we have that the term $\left| W_0^{\de_j,s}(0)\right|$ is bounded in $j$.  Using the Nonlocal Poincaré inequality Proposition \ref{Prop: NLPoincare} \emph{b)}, together with the uniform bound from the definition of $Z_{\text{ad}}$ leads to
        \begin{equation*}
        \|D^{s}_{\de_j}\overline{u_{\de_j}}\|^p_{L^p(\Om; \R^{n \times n})} \ \leq  \ C\|D^{s}_{\de_j}\overline{u_{\de_j}}\|_{L^p(\Om; \R^{n \times n})} +\|a\|_{L^1(\Om)}+\bar{C},
        \end{equation*}
    where $\bar{C},C>0$ are constants independent of $j$, from which it follows that $\{[\overline{u_{\de_j}}]_{H^{s, p, \de_j}(\Om; \R^n)}\}^{\infty}_{j = 1}$ is a bounded sequence (of real numbers). Then, due to the compactness result Lemma \ref{le:compactness}, there exists a (not relabeled) sub-sequence and a function $\widehat{u} \in W_0^{1, p}(\Om; \R^n)$, such that $\overline{u_{\de_j}} \rightarrow \widehat{u}$ strongly in $L^p(\Om; \R^n)$ as $j \rightarrow \infty$. 
 Moreover, due to the $\Ga$-convergence result Theorem \ref{Gas->1Theorem}, and observing that $\{\mathcal{W}^{\de, s}_0\}_{\de>0}$ is an equi-coercive family of functionals with respect to strong convergence in $L^p(\Om; \R^n)$, we have that minimizers of the family $\{\mathcal{W}^{\de, s}_0\}_{s < 1}$ converge to minimizers of the local energy $\mathcal{W}^{\loc}_0$. Furthermore, since $\mathcal{W}^{\de, s}_{\overline{g_{\de}}}$ and $\mathcal{W}^{\loc}_{\overline{g}}$ serve as continuous perturbations of $\mathcal{W}^{\de, s}_0$ and $\mathcal{W}^{\loc}_0$, respectively, this property extends to these energies as well; in particular, $(\widehat{u}, \widehat{g}) \in \mathcal{K}^{\loc}$.
\end{proof}

It is natural to ask whether the admissible pair obtained from Theorem\ref{De->0: compactnessSolns} is actually an optimal pair for the local problem. That is indeed the case for the strictly convex case, as next result shows for the nonlocal $p$-Laplacian problem (which falls under the scope of Theorem \ref{WPLControlThm}). The lack of uniqueness for the non-convex state equation however, complicates the comparison among the different admissible pairs, since not all of them may be obtained as limits of solutions for the nonlocal problems. 

\begin{theorem}[Convergence of minimizers as $\de \rightarrow 0^+$]\label{De->0: convMinss}   
	Let $W$ be an energy density verifying the conditions of Theorem \ref{Gade->0: GaConv} and suppose $F$ is convex in its second argument. Let $(\overline{u_{\de}}, \overline{g_{\de}})$ denote the solution of Problem \ref{nlProbStatement}, while $(\overline{u}, \overline{g})$ denotes the solution of Problem \ref{lProbStatement}. Then, as $\de \rightarrow 0^+$, we have that 
    \begin{itemize}
    \item[a)]$\overline{u_{\de}} \rightarrow \overline{u}$ strongly in $L^p(\Om; \R^n)$,
    \item[b)] $\overline{g_{\de}} \rightharpoonup \overline{g}$ weakly in $L^{p'}(\Om; \R^n)$,
    \item[c)] $D^s_\de \bar{u}_{\de} \to \nabla \bar{u}$ strongly in $L^p(\Om; \R^{n \times n})$.
\end{itemize}
Moreover, we have the limit 
	\begin{equation}\label{Eq: limde->0MinCon}
		\lim_{\de \rightarrow 0^+}\mathcal{F}(\overline{u_{\de}}, \overline{g_{\de}}) \ = \ \mathcal{F}(\overline{u}, \overline{g}).
	\end{equation}
\end{theorem}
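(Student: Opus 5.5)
The argument mirrors the proof of Theorem \ref{convMinss->1}, replacing each $s\to1^-$ ingredient by its vanishing-horizon counterpart: Lemma \ref{De->0: compactnessSolns} instead of Lemma \ref{compactnessSolns}, Proposition \ref{le:compactness} instead of Proposition \ref{Prop: CompactSVary}, and Theorem \ref{Gade->0: GaConv} instead of Theorem \ref{Gas->1Theorem}. Since the limit pair is shown to be the unique local minimizer, it suffices to prove every claim along arbitrary sequences $\de_j\to0$ up to subsequences.

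\textbf{Step 1 (compactness and admissibility of the limit).} Given $\de_j\to 0$, apply Lemma \ref{De->0: compactnessSolns} to the optimal pairs $(\overline{u_{\de_j}},\overline{g_{\de_j}})$. After passing to a subsequence this gives $(\widehat u,\widehat g)\in\mathcal{A}^{\loc}$ with $\widehat u\in W^{1,p}_0(\Om;\R^n)$, $\overline{u_{\de_j}}\to\widehat u$ in $L^p(\Om;\R^n)$ and $\overline{g_{\de_j}}\rightharpoonup\widehat g$ in $L^{p'}$. By Proposition \ref{le:compactness} we also have $D^s_{\de_j}\overline{u_{\de_j}}\rightharpoonup\nabla\widehat u$ in $L^p$. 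The uniform bound on the gradients uses Poincaré case b), which holds under the rescaling hypothesis.

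\textbf{Step 2 (recovery of competitors and optimality).} Fix $(v,f)\in\mathcal{A}^{\loc}$ and let $v_\de:=S_{\de,s}(f)$. Rerun Step 1 with constant control $f$. The limit $\widehat v$ solves the local state equation with datum $f$, either by the $\Gamma$-convergence/equicoercivity argument or directly by passing to the limit in \eqref{stateEqnWkFormNL} against test functions $\varphi\in C^\infty_c(\Om)$ via Lemma \ref{le: localication of the nl gradient when de goes to 0}. Uniqueness for \eqref{stateEqnWkFormL} then gives $\widehat v=v$ and $v_\de\to v$ in $L^p$. The growth bound \eqref{Gqcoercive} and dominated convergence yield $\mathcal{F}(v_\de,f)\to\mathcal{F}(v,f)$. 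Minimality of the nonlocal pairs together with weak lower semicontinuity of $g\mapsto\int\La|g|^{p'}$ and convergence of the $F$-term then give
\[
\mathcal{F}(v,f)=\lim\mathcal{F}(v_{\de_j},f)\ge\liminf\mathcal{F}(\overline{u_{\de_j}},\overline{g_{\de_j}})\ge\mathcal{F}(\widehat u,\widehat g).
\]
So $(\widehat u,\widehat g)$ is optimal, and by Theorem \ref{WPLControlThm} it equals $(\overline u,\overline g)$, which proves a) and b). Choosing $(v,f)=(\overline u,\overline g)$ closes the chain and gives \eqref{Eq: limde->0MinCon}.

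\textbf{Step 3 (strong gradient convergence, c)).} I expect this step to be the main obstacle. Testing the nonlocal state equation with $\overline{u_\de}$ gives
\[
\int_\Om\mathbb{A}|D^s_\de\overline{u_\de}|^{p-2}D^s_\de\overline{u_\de}:D^s_\de\overline{u_\de}=\int_\Om\overline{g_\de}\cdot\overline{u_\de},
\]
and the right side tends to $\int\overline g\cdot\overline u=\int\mathbb{A}|\nabla\overline u|^{p-2}\nabla\overline u:\nabla\overline u$ by strong$\times$weak convergence and the local state equation. Hence the $\mathbb{A}$-weighted $p$-energies converge. Combined with weak convergence $D^s_\de\overline{u_\de}\rightharpoonup\nabla\overline u$, this should give strong convergence through a Radon--Riesz/uniform convexity argument.

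The delicate point is that the weighted functional is not literally a norm unless $\mathbb{A}$ commutes suitably with the $|\cdot|^{p-2}$ factor. To handle this, I would first try the monotonicity of $\xi\mapsto\mathbb{A}|\xi|^{p-2}\xi$ tested against $D^s_\de\overline{u_\de}-\chi_{\Om_{-\de}}D^s_\de\overline u$: expand the pairing, use Lemma \ref{le: localication of the nl gradient when de goes to 0} for the strong convergence of $\chi_{\Om_{-\de}}D^s_\de\overline u$, and use the energy identity to show the pairing tends to zero. Standard $p$-monotonicity inequalities (separately for $p\ge2$ and $p<2$) then yield $\|D^s_\de\overline{u_\de}-\nabla\overline u\|_{L^p}\to0$. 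The boundary layer $\Om\setminus\Om_{-\de}$ has vanishing measure, so it is controlled by equi-integrability coming from the uniform energy bound.
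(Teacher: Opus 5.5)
For a), b) and \eqref{Eq: limde->0MinCon}, your argument is the paper's. It uses Lemma~\ref{De->0: compactnessSolns}, frozen-control competitors $v_\delta=S_{\delta,s}(f)$ identified through uniqueness of the local state equation, the inequality chain, and uniqueness from Theorem~\ref{WPLControlThm}. Your subsequence remark only makes explicit what the paper leaves implicit.

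You diverge in c). The paper just reruns Theorem~\ref{convMinss->1}\,c): energy identity, weak convergence from Proposition~\ref{le:compactness}, a Radon--Riesz step for the ``$\mathbb{A}$-weighted norm'' $\int_\Omega\mathbb{A}|A|^{p-2}A:A$, then ellipticity. That route needs no comparison field. It does tacitly need $\bigl(\int_\Omega\mathbb{A}|A|^{p-2}A:A\bigr)^{1/p}$ to behave like a uniformly convex norm, which holds for $\mathbb{A}(x)=a(x)\,\mathrm{Id}$ or $p=2$. Your Minty-type route replaces this with a quantitative monotonicity inequality on the flux. The cost is a strongly convergent comparison field (hence Lemma~\ref{le: localication of the nl gradient when de goes to 0}) and a treatment of the cross terms. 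The gain is that the argument extends to general strongly monotone fluxes of $p$-growth, without relying on a norm structure.

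Three things need tightening.

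\emph{First, the cross term.} The term $\int_\Omega\mathbb{A}|D^s_\delta\overline{u_\delta}|^{p-2}D^s_\delta\overline{u_\delta}:\chi_{\Omega_{-\delta}}D^s_\delta\overline{u}$ is not controlled by the energy identity. Also, $\overline u$ is not an admissible test function in \eqref{stateEqnWkFormNL}, since it need not vanish on $\Omega\setminus\Omega_{-\delta}$. Let $\Sigma$ be a weak $L^{p'}$-limit of the fluxes. Test \eqref{stateEqnWkFormNL} with $\varphi\in C^\infty_c(\Omega)$; this is admissible once $\delta$ is small relative to $\dist(\supp\varphi,\partial\Omega)$, and then $D^s_\delta\varphi\to\nabla\varphi$ strongly. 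Passing to the limit gives $\int_\Omega\Sigma:\nabla\varphi=\int_\Omega\overline g\cdot\varphi$, hence by density $\int_\Omega\Sigma:\nabla\overline u=\int_\Omega\overline g\cdot\overline u$. With this, all four terms of the expanded pairing tend to $\int_\Omega\overline g\cdot\overline u$, so the pairing tends to zero. Your ``direct'' alternative in Step 2 needs the same flux identification plus Minty's trick when $p\neq2$.

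\emph{Second, the monotonicity inequalities.} The ``standard $p$-monotonicity inequalities'' are for $a(x)|\xi|^{p-2}\xi$. For genuinely matrix-valued $\mathbb{A}$ and $p\neq 2$, the map $\xi\mapsto|\xi|^{p-2}\mathbb{A}\xi$ need not even be monotone. For example, take $p=4$ and $\mathbb{A}$ acting as $\mathrm{diag}(1,\varepsilon)$ on a two-dimensional subspace of $\R^{n\times n}$, with $\varepsilon$ small. So monotonicity does not get around the anisotropy issue you raised; it is available in the same model cases where the paper's norm argument is valid. In the anisotropic case the obstruction is upstream: \eqref{stateEqnWkFormNL} is then not the Euler--Lagrange equation of \eqref{Eq: pLapDens}.

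\emph{Third, the boundary layer.} Drop your final sentence on it. The comparison field vanishes on $\Omega\setminus\Omega_{-\delta}$, so the monotonicity estimate already controls $D^s_\delta\overline{u_\delta}$ there. In any case, a uniform $L^p$ bound would not give equi-integrability of $|D^s_\delta\overline{u_\delta}|^p$.
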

\begin{proof}
We first prove \emph{a)} and \emph{b)}. Due to Lemma \ref{De->0: compactnessSolns}, there exists a sequence $\{\de_j\}^{\infty}_{j = 1}$ such that $\de_j \rightarrow 0$, and a pair $(\widehat{u}, \widehat{g}) \in \mathcal{K}^{\loc}$ such that $\overline{u_{\de_j}} \rightarrow \widehat{u}$ strongly in $L^p(\Om_{\de}; \R^n)$ and $\overline{g_{\de_j}} \rightharpoonup \widehat{g}$ weakly in $L^{p'}(\Om; \R^n)$, as $\de_j \rightarrow 0$. We will see that $(\widehat{u}, \widehat{g})$ is actually a minimizer of Problem \ref{lProbStatement} which, by uniqueness, coincides with $(\overline{u}, \overline{g})$. 
First we observe that every element in $\mathcal{K}^{\loc}$ can be obtained as a limit of solutions of the nonlocal problems. Indeed, given $(v, f) \in \mathcal{K}^{\loc}$ be arbitrary, and let $v_{\de}$ denote the solution to \eqref{nonlocalAdmiClassG} with right-hand side $f$, if we repeat the argument of Lemma \ref{De->0: compactnessSolns} with $\overline{g_{\de_j}} := f$ for all $\de>0$, what we obtain is a function $\widehat{v} \in W_0^{1, p}(\Om; \R^n)$ such that $v_{\de} \rightarrow \widehat{v}$ strongly in  $L^p(\Om_{\de}; \R^n)$. Since the local state equation is well-posed, we have that $\widehat{v} = v$.
	Then, by the Dominated Convergence Theorem (which applies due to the growth condition \eqref{Gqcoercive}),
	\begin{equation}\label{convMinsde->0Eq1}   
		\lim_{\de \rightarrow 0}\mathcal{F}(v_\de, f) \ = \ \mathcal{F}(v, f).
	\end{equation}
	Meanwhile, due to the minimality of $(\overline{u_\de}, \overline{g_\de})$ for each $\de>0$, we have the inequality
	\begin{equation}\label{convMinsde->0Eq2} 
		\liminf_{\de \rightarrow 0}\mathcal{F}(v_\de, f) \ \geq \ \liminf_{\de \rightarrow 0}\mathcal{F}(\overline{u_\de}, \overline{g_\de}).
	\end{equation}
	Next, we may use the weak lower semi-continuity of the $L^{p'}$ norm and the Dominated Convergence Theorem to see that
	\begin{equation}\label{convMinsde->0Eq3} 
		\liminf_{\de \rightarrow 0}\mathcal{F}(\overline{u_\de}, \overline{g_\de}) \ \geq \ \mathcal{F}(\overline{u}, \overline{g}).
	\end{equation}
	The conclusion of this inequality chain is that $\mathcal{F}(v, f) \geq \mathcal{F}(\overline{u}, \overline{g})$. Moreover, setting $v := \overline{u}$ and $f := \overline{g}$ yields the limit $\lim_{\de \rightarrow 0}\mathcal{F}(\overline{u_\de}, \overline{g_\de}) = \mathcal{F}(\overline{u}, \overline{g})$, completing the proof.

Finally, we simply comment that the proof of \emph{c} follows exactly that of part \emph{c} for Theorem \ref{convMinss->1}.

    \end{proof}

Despite the aforementioned obstacle for obtaining a similar result in the non-convex case, a relative  minimization result can be presented, with respect every admissible pair obtained as limit of solutions.
\begin{corollary}[Relative minimization]\label{de->0: compactnessRecminsQuasi}
	In the context of Theorem \ref{De->0: convMinss}. Let $\{(\overline{u_{\de}}, \overline{g_{\de}})\}_{\de > 0}$ be a family of solutions for Problem \ref{nlProbStatementG}. Then there exists a pair $(\overline{u}, \overline{g}) \in \mathcal{K}^{\loc}$ such that, up to a sub-sequence, $\overline{u_{\de}} \rightarrow \overline{u}$ strongly in $L^p(\Om; \R^n)$, and $\overline{g_{\de}} \rightharpoonup \overline{g}$ weakly in $L^{p'}(\Om; \R^n)$. Furthermore, for every $g \in Z_{\text{ad}}$, there exists a $\widetilde{u} \in W^{1, p}_0(\Om; \R^n)$ so that $(\widetilde{u}, g) \in \mathcal{K}^{\loc}$, and
	\begin{equation}\label{lem: compactnessRecminsQuasiIneqDe->0}
		\mathcal{F}(\overline{u}, \overline{g}) \ \leq \ \mathcal{F}(\widetilde{u}, g).
	\end{equation}
\end{corollary}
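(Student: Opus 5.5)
The proof will follow the template of Corollary \ref{cor: compactnessRecminsQuasi}, replacing the $s \to 1^-$ tools by their vanishing-horizon counterparts.

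\textbf{Step 1: a limit pair.} By Lemma \ref{De->0: compactnessSolns} there is a sequence $\de_j \to 0$ and a pair $(\overline{u}, \overline{g}) \in \mathcal{K}^{\loc}$ with
\[
\overline{u_{\de_j}} \to \overline{u} \ \text{strongly in } L^p(\Om; \R^n), \qquad \overline{g_{\de_j}} \rightharpoonup \overline{g} \ \text{weakly in } L^{p'}(\Om; \R^n).
\]
Since $F$ is continuous and satisfies the growth bound \eqref{Gqcoercive}, Dominated Convergence gives convergence of $\int_\Om F(x,\overline{u_{\de_j}})\,dx$; here I will use strong $L^{p^*_s}$ convergence, obtained by interpolating the uniform $H^{s,p,\de}$ bound, or I will simply work with the exponent allowed by the compactness. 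Weak lower semicontinuity of the weighted $L^{p'}$ term then yields
\[
\mathcal{F}(\overline{u}, \overline{g}) \le \liminf_j \mathcal{F}(\overline{u_{\de_j}}, \overline{g_{\de_j}}),
\]
exactly as in \eqref{convMinsde->0Eq3}.

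\textbf{Step 2: a competitor for a fixed control.} Fix $g \in Z_{\text{ad}}$. For each $\de_j$, Theorem \ref{existenceMinsNLGeneral} (more precisely, the existence of minimizers of the energy cited after Proposition \ref{minimizationOfNLEnergy}) provides some $v_{\de_j} \in \argmin \mathcal{W}^{\de_j,s}_g$, so $(v_{\de_j}, g) \in \mathcal{K}^{\de_j}_s$. Minimality of $(\overline{u_{\de_j}}, \overline{g_{\de_j}})$ gives
\[
\mathcal{F}(\overline{u_{\de_j}}, \overline{g_{\de_j}}) \le \mathcal{F}(v_{\de_j}, g).
\]
Running the proof of Lemma \ref{De->0: compactnessSolns} with the constant control $g$ shows three things. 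First, the uniform Poincaré bound (Proposition \ref{Prop: NLPoincare} b)) makes $\|D^s_{\de_j} v_{\de_j}\|_{L^p}$ bounded. Second, Proposition \ref{le:compactness} then gives a further subsequence with $v_{\de_j} \to \widetilde{u}$ in $L^p$, where $\widetilde{u} \in W^{1,p}_0(\Om;\R^n)$. Third, the $\Gamma$-convergence of Theorem \ref{Gade->0: GaConv}, together with equi-coercivity and the fact that $-\langle g,\cdot\rangle$ is a continuous perturbation, shows that $\widetilde{u}$ minimizes $\mathcal{W}^{\loc}_g$. Hence $(\widetilde{u}, g) \in \mathcal{K}^{\loc}$, and Dominated Convergence gives $\mathcal{F}(v_{\de_j}, g) \to \mathcal{F}(\widetilde{u}, g)$.

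\textbf{Step 3: chaining.} Passing to the further subsequence (along which the convergences of Step 1 persist), the three facts combine into
\[
\mathcal{F}(\overline{u}, \overline{g}) \le \liminf_j \mathcal{F}(\overline{u_{\de_j}}, \overline{g_{\de_j}}) \le \lim_j \mathcal{F}(v_{\de_j}, g) = \mathcal{F}(\widetilde{u}, g),
\]
which is \eqref{lem: compactnessRecminsQuasiIneqDe->0}. The subsequence may depend on $g$. This is harmless: it is extracted after the first subsequence is fixed, so $(\overline{u}, \overline{g})$ does not change.

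\textbf{Main obstacle.} The delicate point is the passage of the cost term along $v_{\de_j}$ and $\overline{u_{\de_j}}$. Because of the $p^*_s$ growth in \eqref{Gqcoercive}, strong $L^p$ convergence alone is not enough; one needs equi-integrability of $|v_{\de_j}|^{p^*_s}$. I would first try to obtain it from the uniform bound combined with a $\de$-independent Sobolev-type embedding. If that fails, I would use that only an inequality is needed in Step 1, and handle the equality in Step 2 via Fatou on $F + c_1|v|^{p^*_s} + \ell$ plus a lower growth assumption. The remaining work is checking that the $\Gamma$-convergence argument of Lemma \ref{De->0: compactnessSolns} really transfers to a fixed control.
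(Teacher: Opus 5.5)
Your proposal is the paper's argument. It takes the limit pair from Lemma \ref{De->0: compactnessSolns}, uses nonlocal state-minimizers $v_{\de_j}$ for the fixed control as competitors, applies $\Gamma$-convergence to identify their limit $\widetilde{u}$ as a local minimizer, and closes with the chain \eqref{convMinsde->0Eq1}--\eqref{convMinsde->0Eq3}; your remark that the $g$-dependent subsequence is extracted last is a correct detail the paper leaves implicit.

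The integrability issue you flag is genuine, and the paper passes over it with a bare appeal to Dominated Convergence. Neither of your remedies closes it at the critical exponent, however. A $\de$-uniform embedding plus strong $L^p$ convergence gives strong $L^q$ convergence only for $q<p^*_s$. Fatou-type arguments give only the lower semicontinuity used in Step 1, not the upper bound $\lim_j \mathcal{F}(v_{\de_j},g)\le\mathcal{F}(\widetilde{u},g)$ required in Step 2. A fully rigorous version therefore needs either strong $L^{p^*_s}$ convergence of the states or a growth exponent in \ref{F3} strictly below $p^*_s$: exponent $p$ works directly, and any $q<p^*_s$ works given your $\de$-uniform embedding.
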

\begin{proof}
    Given the lack of uniqueness of the non-convex state equation, we cannot identify $\hat{v}$ with $v$ in the proof of Theorem \ref{De->0: convMinss} (just before \eqref{convMinsde->0Eq1}). Instead, combing \eqref{convMinsde->0Eq1}, \eqref{convMinsde->0Eq2} and \eqref{convMinsde->0Eq3} leads to \eqref{lem: compactnessRecminsQuasiIneqDe->0}.
\end{proof}

Finally, similarly to what happens in the localization as $s \rightarrow 1$ case, we have that the convergence of controls can be improved to strong convergence. Concretely, thanks to the structure of the cost functional, it can be improved from weak $L^{p'}(\Om; \R^n)$ convergence to strong convergence in any $L^r(\Om; \R^n)$ space (for $r \in [1, \infty)$). Furthermore, the next result actually holds by just assuming the existence of a converging sequence of minimizers to a minimizer of the local problem, regardless whether the state equation is convex or not. We omit the detailed proof since it essentially follows that of Theorem \ref{Th: StrConvControls}.

\begin{theorem}[Strong convergence of controls as $\de \rightarrow 0^+$]\label{De->0: StrConvControls}
	Let $W$ be an energy density verifying the conditions of Theorem \ref{Gas->1Theorem}. 
   Assume $\{(\overline{u_{\de}}, \overline{g_{\de}})\}_{\de > 0}$ is a sequence of solutions to Problem \ref{nlProbStatement} (respectively Problem \eqref{nlProbStatementG}), converging to $(\overline{u}, \overline{g})$, a solution of the respective local problem. Then, as $\de \rightarrow 0^+$, we have that $\overline{g_{\de}} \rightarrow \overline{g}$ strongly in $L^r(\Om; \R^n)$ for any $r \in [1, \infty]$.
\end{theorem}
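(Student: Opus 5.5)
We follow the proof of Theorem \ref{Th: StrConvControls} essentially verbatim, replacing the parameter $s\to 1^-$ by $\de\to 0^+$ and Theorem \ref{convMinss->1} by Theorem \ref{De->0: convMinss} (or, in the non-convex case, by the assumed convergence of the minimizing pairs). The key observation is that the only inputs needed are the strong convergence $\overline{u_\de}\to\overline{u}$ in $L^p(\Om;\R^n)$, the weak convergence $\overline{g_\de}\rightharpoonup\overline{g}$ in $L^{p'}(\Om;\R^n)$, and the convergence of optimal costs $\mathcal{F}(\overline{u_\de},\overline{g_\de})\to\mathcal{F}(\overline{u},\overline{g})$. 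These are either hypotheses or the content of \eqref{Eq: limde->0MinCon}.

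\textbf{Step 1: convergence of the state term.} From the growth bound \ref{F3} and the strong $L^p$ convergence of the states, we get
\[
\int_\Om F(x,\overline{u_\de})\,dx\to\int_\Om F(x,\overline{u})\,dx
\]
via the generalized Dominated Convergence Theorem, exactly as in \eqref{Eq: StrConvControls1}.

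\emph{Caveat.} Strictly speaking, \ref{F3} involves the exponent $p^*_s$. Here $s$ is fixed, so the uniform embedding into $L^{p^*_s}$, combined with the Poincar\'e constant of Proposition \ref{Prop: NLPoincare} b) being independent of $\de$, keeps the family $|\overline{u_\de}|^{p^*_s}$ uniformly bounded. This is the step I expect to require the most care: passing from uniform boundedness to an actual limit of the integrals needs either equi-integrability or a lower-order compact embedding. If necessary, I would use the uniform bound in a slightly higher exponent to obtain equi-integrability and then apply Vitali's theorem.

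\textbf{Step 2: convergence of the weighted norms.} Subtracting the limit in Step 1 from the convergence of the total cost yields
\[
\int_\Om\La|\overline{g_\de}|^{p'}\,dx\to\int_\Om\La|\overline{g}|^{p'}\,dx,
\]
that is, convergence of the weighted norms $\|\cdot\|_{L^{p'}_\La}$. Since $\la\le\La\in L^\infty$, the space $L^{p'}_\La$ is uniformly convex, being an $L^{p'}$ space with respect to the measure $\La\,dx$. Weak convergence in $L^{p'}$ coincides with weak convergence in $L^{p'}_\La$, because the dual pairings differ only by multiplication by the bounded function $\La$. The Radon--Riesz (Kadec--Klee) property then gives strong convergence in $L^{p'}_\La$. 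Norm equivalence transfers this to strong convergence in $L^{p'}(\Om;\R^n)$.

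\textbf{Step 3: extension to all $r$.} For $r<p'$ we apply H\"older's inequality on the bounded domain $\Om$. For $p'<r<\infty$ we use the box constraint, which gives $|\overline{g_\de}-\overline{g}|\le 2\max\{\|\mathfrak a\|_\infty,\|\mathfrak b\|_\infty\}$, and interpolate as in the proof of Theorem \ref{Th: StrConvControls}.

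The statement also lists $r=\infty$. This does not follow from the argument above: uniform bounds together with $L^{p'}$ convergence do not give $L^\infty$ convergence. I would therefore restrict the claim to $r\in[1,\infty)$, consistent with Theorem \ref{Th: StrConvControls}, and remark that the endpoint $r=\infty$ would require additional structure on the controls.
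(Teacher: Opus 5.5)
Your proposal is correct and matches the paper's argument: the paper omits the proof and points to the proof of Theorem \ref{Th: StrConvControls}, which you transcribe step by step; the paper handles your Step~1 by the same appeal to generalized dominated convergence (though your Vitali fallback would not work, since nothing bounds the states in an exponent above the critical $p^*_s$), and in the non-convex case it likewise relies on convergence of the optimal costs, which has to be read into the hypothesis because convergence of the pairs alone only gives the $\liminf$ inequality. You are also right that the argument yields only $r\in[1,\infty)$: the $s\to1^-$ counterpart is stated and proved for $[1,\infty)$, and nothing in the paper supports the endpoint $r=\infty$ claimed here.
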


\begin{remark}\label{Rmk: PathDeAndS}
    A priori, it is possible that the limits of the sequences $\{D^s_{\de}\overline{u_{ s}}\}_{s < 1}$ and $\{D^s_{\de}\overline{u_{\de}}\}_{\de > 0}$ may be different. However, in the setting of Problems \ref{nlProbStatement} and \ref{lProbStatement}, these sequences both convergence to the optimal state $\overline{u}$ for Problem \ref{lProbStatement}. Since this optimal state is unique when $F$ is non-decreasing in the second argument (see Theorem \ref{WPLControlThm}), necessarily both sequences of nonlocal gradients have sub-sequences converging to $\overline{u}$ strongly in $L^p(\Om; \R^n)$.
\end{remark}

\begin{remark}\label{Rmk: RieszGradient}
    Our study of control problems and their convergence could also be applied in the setting of Riesz fractional gradients even if $\Om$ is not bounded; the requisite compact embeddings, compactness results, and Poincaré Inequality are detailed in \cite{bellido2021gamma}. We may also generate another class of optimal control results using the kernel assumptions in \cite{schonberger2024nonlocal} and sending $\de \rightarrow \infty$ (rather than $\de \rightarrow 0^+$).  
\end{remark}


\section{Conclusion and future work}\label{sec: future}

In this paper we studied two families of nonlocal optimal control problems on function spaces parameterized by a fractional parameter and a horizon parameter. We studied the special case where the constraining energy to be minimized took the form of a nonlocal $p$-Laplacian energy, then considered which steps could and could not be replicated in the more general case where an arbitrary quasiconvex energy is minimized.

A natural extension to this project, which we may consider in the future, is the development of $\Ga$-convergence results as $\de \rightarrow 0^+$ and $s \rightarrow 1^-$ simultaneously, along with the question of whether minimizers and minimum values still converge  (for the corresponding optimal control problems). One could also consider nonlocal optimal design problems in the functional framework presented in this paper; for instance the coefficient $\mathbb{A}$ appearing in the nonlocal convex energy \eqref{Eq: pLapDens} could serve as the control variable, in a similar spirit to the paper \cite{mengesha2025design}.

In addition, one of the main drawbacks to using $\Ga$-convergence is that it does not admit rates of convergence (in $\de$ or $s$). One must instead utilize regularity theory to assure that solutions to the problems belong to higher-order fractional spaces, and then rates of convergence can be recovered (see the recent work \cite{scott2025nonlocal} that considers this problem for a different class of nonlocal spaces). This is also a worthwhile extension of the present work.

Finally, a problem of mathematical interest is to consider control problems where the cost functional  does not adhere to any notion of convexity. This would require the use of second-order optimality conditions akin to the papers \cite{antil2018brief, antil2020optimal}.


\section*{Acknowledgments}

\noindent The work of JC has been supported by Fundaci\'on Ram\'on Areces. JC has also been supported by the Spanish Agencia Estatal de Investigaci\'on through the project  PID2024-158664NB-C2. Meanwhile, JMS did not receive any external funding that was used to complete this project.

\bibliographystyle{plain} 
\bibliography{refs} 

\end{document}